\newcommand{\bP}{{\mathbb P}}
\newcommand{\bZ}{{\mathbb Z}}
\newcommand{\bF}{{\mathbb F}}
\newcommand{\bA}{{\mathbb A}}
\newcommand{\bB}{{\mathbb B}}
\newcommand{\bC}{{\mathbb C}}
\newcommand{\bD}{{\mathbb D}}
\newcommand{\la}{{\langle}}
\newcommand{\ra}{{\rangle}}
\newtheorem{thm}{Theorem}[section]
\newtheorem{lemma}[thm]{Lemma}
\newtheorem{cor}[thm]{Corollary}
\newtheorem{prop}[thm]{Proposition}
\numberwithin{equation}{section}
\title[   $p$-groups $rank_p=2$]
{The splitting of  cohomology of $p$-groups with rank $2$}
\author[A.Hida and N.Yagita]{Akihiko  Hida and Nobuaki Yagita}
\begin{document}

\address{[A. Hida] Department of Mathematics, Faculty of Education, Saitama University,
 Saitama, Japan}
\email{ahida@mail.saitama-u.ac.jp}

\address{[N. Yagita]
Department of Mathematics, Faculty of Education, Ibaraki University,
Mito, Ibaraki, Japan}
 
\email{yagita@mx.ibaraki.ac.jp}
\keywords{stable splitting, cohomology, classifying spaces}
\subjclass[2010]{Primary 55P35, 57T25, 20C20 ; Seconary 
 55R35, 57T05}

\begin{abstract}
Let $p$ be an odd prime and $BP$ the classifying space of a $p$-group $P$
with $rank_p(P)=2$.  By using stable homotopy splitting of $BP$, we study the decomposition of
$H^{even}(P;\bZ)/p$  and $CH^*(BP)/p$.
\end{abstract}

\maketitle

\section{Introduction}

Let $P$ be a $p$-group and $BP$ its classifying space.
We study the stable splitting and splitting of cohomology
\[(*)\quad BP\cong X_1\vee ...\vee X_i,\quad\]
\[(**)\quad H^*(P)\cong H^*(X_1)\oplus ... \oplus H^*(X_i)\quad (for\ *>0)\]
where $X_i$ are irreducible spaces in the stable homotopy category.
If we can get the splitting $(**)$ of cohomology, then we can study  
$H^*(G)_{(p)}$ for all groups $G$ having Sylow $p$ subgroups
which are isomorphic to $P$.

When Mitchell and Priddy [Mi], [Mi-Pr] started this problem, the 
splitting $(*)$ was
got by using the splitting $(**)$ of the cohomology of $P$. Since $H^*(P)$ are 
quite complicated for odd primes nonabelian $p$-groups, 
the examples were mostly given for $p=2$.

But after the answer of the Segal conjecture by Carlsson,
the splittings $(*)$ are  given by only using modular representation theory
by Nishida [Ni], Benson-Feshbach [Be-Fe] and Martino-Priddy [Ma-Pr].
In fact, their theorems  say that such decomposition is  decided only by structures of  simple modules of the mod$(p)$ double Burnside algebra $A(P,P)$.
These theorems do not use splittings of cohomology,  and  results are given for 
all primes $p$.

In particular, Dietz and Dietz-Priddy [Di], [Di-Pr] gave the stable 
splitting $(*)$
for groups $P$ with $rank_p(P)=2$ for $p\ge 5$.  However it was not used splittings
$(**)$ of the cohomology $H^*(P)$,  and the cohomologies $H^*(X_i)$ were not given there.

In [Hi-Ya1,2], we give the cohomology  of $H^*(X_i)$ (and hence $(**)$) for $P=p_+^{1+2}$
the extraspecial $p$ group of order $p^3$ and exponent $p$.
Their cohomology $H^*(X_i)$ are very complicated but 
have rich structures,  in fact $p_+^{1+2}$
is a $p$-Sylow subgroup of many interesting groups, e.g., $GL_3(\bF_p)$ and many simple groups
e.g. $J_4$ for $p=3$. 

In this paper, we give the decomposition of
\[H^*(P)=H^*(P;\bZ)/(p,\sqrt{0})\qquad  (and\ \ H^{ev}(P)=H^{even}(P;\bZ)/p) \]
for other $rank_pP=2$ groups
for odd primes $p$.
It is important to compute the transfer map for 
 the double Burnside algebra $A(P,P)$ action on $H^*(P;\bZ)$.
In general, it is not a so easy problem 
to compute the transfer on $H^*(P;\bZ)$.
However we can always compute it on $H^*(P)$ 
from Quillen's theorem.
  
In most cases, $H^*(X_i)$ are easily got but seemed not to have so rich structure
as $p_+^{1+2}$,
because they are not $p$-Sylow subgroups of so interesting groups.
However, we hope that from our computations, it becomes more clear the relations among splittings of  $H^*(P)$ of groups $P$ with $rank_p(P)=2$.

In particular, we note that the irreducible components are
most $fine$ in those of $rank_p=2$ groups,
namely,  the cohomology $H^*(X_{i}(P))$ 
 can  be written by using the decomposition
$H^*(X_{k}(p_+^{1+2}))$ (Lemma 8.1, Theorem 8.2, Theorem 8.5).  
\begin{thm}
For $p\ge 5$, let $P$ be a non-abelian $p$-group of $rank_pP=2$, which is not a  metacyclic group.
Let  $X_i(P)$ be an irreducible component of $BP$.  Then we can write
\[ H^*(X_i(P))\cong  \oplus_{j\in J(i,P)}H^*(X_j(p_+^{1+2}))\]
for some index set  $J(i,P)$.
\end{thm}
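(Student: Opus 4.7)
The plan is to exploit the classification of non-metacyclic rank $2$ $p$-groups for $p\ge 5$ together with the case-by-case cohomological analysis already set up in Section~8. Concretely, I would proceed in three steps.

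First, since $rank_p(P)=2$, Quillen's theorem gives an $F$-isomorphism between $H^*(P)$ and the invariant ring $H^*(V)^{W_P(V)}$, where $V\cong (\bZ/p)^2$ is an essential elementary abelian subgroup and $W_P(V)=N_P(V)/C_P(V)$ is its Weyl group. As noted in the introduction, the transfer maps and hence the action of the primitive idempotents of the mod-$p$ double Burnside algebra $A(P,P)$ on $H^*(P)$ can then be computed entirely in terms of $W_P(V)$-orbits and subgroup transfers on $H^*(V)$.

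Second, I would invoke the Dietz--Priddy stable splitting $BP\simeq X_1(P)\vee\cdots\vee X_n(P)$, which by [Ni], [Be-Fe], [Ma-Pr] is controlled purely by primitive idempotents of $A(P,P)$; applying these idempotents to the cohomology side isolates each $H^*(X_i(P))$ as a submodule of $H^*(V)^{W_P(V)}$.

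Third, I would match the resulting pieces against the known summands $H^*(X_j(p_+^{1+2}))$ from [Hi-Ya1,2]. The key point is that $p_+^{1+2}$ also has essential elementary abelian subgroup $V$, with Weyl group $W_{p_+^{1+2}}(V)\cong \bZ/p$, and for each $P$ in the Blackburn classification one can identify $V\le P$ with $V\le p_+^{1+2}$ so that $W_{p_+^{1+2}}(V)\le W_P(V)$ compatibly. Consequently every $W_P(V)$-isotypic piece of $H^*(V)$ decomposes as a direct sum of $W_{p_+^{1+2}}(V)$-isotypic pieces, which yields the index sets $J(i,P)$.

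The main obstacle is not the overall strategy but the case analysis. For each family of non-metacyclic rank $2$ $p$-groups (the exponent-$p$ extraspecial $p_+^{1+2}$, the exponent-$p^2$ extraspecial $p_-^{1+2}$, and the maximal-class families to which the exclusion ``not metacyclic'' applies), one must explicitly determine the primitive idempotents of $A(P,P)$, apply them to $H^*(P)$ via Quillen's description, and match the resulting summands with those of $p_+^{1+2}$. These computations are precisely what is carried out in Lemma~8.1, Theorem~8.2, and Theorem~8.5, and the present theorem then follows by assembling them.
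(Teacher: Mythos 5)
Your overall architecture (Blackburn classification, Dietz--Priddy splittings, comparison with $E=p_+^{1+2}$, assembly of Lemma 8.1, Theorem 8.2 and Theorem 8.5) matches the paper's, but two of your intermediate steps would fail as stated. First, the groups actually covered by the theorem are $C(r)$, $r\ge 3$, and $G(r',e)$, $r'\ge 4$; note that $p_-^{1+2}=M(1,2,1)$ is metacyclic and is excluded by hypothesis, so it does not belong on your case list. For the groups that do occur there is no single essential elementary abelian $V$ with $H^*(P)$ $F$-isomorphic to $H^*(V)^{W_P(V)}$: these groups have $p+1$ conjugacy classes of maximal elementary abelian subgroups $A_0,\dots,A_{p-1},A_{\infty}$, Quillen's theorem gives an inverse limit over all of them, and the outcome, e.g.\ $H^*(E)\cong (k[y_1,y_2]/(y_1^py_2-y_1y_2^p)\oplus k\{C\})\otimes k[v]$, is not the invariant ring of any single $V$.

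Second, and more seriously, the matching in your third step cannot be carried out by decomposing $W_P(V)$-isotypic pieces into $W_{E}(V)$-isotypic pieces. The case $P=C(r)$ with $r>3$ already defeats this: $H^*(C(r))\cong H^*(E)$ with identical $Out$-module structure and identical elementary abelian subgroup data, yet $H^*(X_{i,0}(C(r)))\cong H^*(X_{i,0}(E))\oplus H^*(L(2,-i))$ is a sum of several $E$-summands. The reason is invisible to Weyl-group data: transfers from proper subgroups vanish on $H^*(C(r))$ for $r>3$ but not for $r=3$, so the non-dominant summands $L(2,q)$ occur in $BE$ but not in $BC(r)$. What actually drives the paper's proof is (i) the identification of the non-dominant summands in Lemma 8.1 and (ii) explicit transfer computations such as $Tr_{A_j}^E(u^{p-1})=(jy_1+y_2)^{p-1}-C$ and $Tr_{C(r-1)}^{G(r,e)}(y_2^{p-1})=-y_1^{p-1}$, which locate each $L(1,q)$, $L(2,q)$ and $X_{p-1,q}(C(r-1))$ inside the correct $Out(P)$-summand $Y_{i,q}$ and bound the multiplicities $a_j,b_k,c$ by $1$. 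Your proposal names the right destination, but the mechanism you give for reaching it would not produce the index sets $J(i,P)$.
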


This paper is organized as follows.
In $\S 2$ we recall the definition and properties of the double Burnside algebra  and the stable splitting.
In $\S 3$, we note the relations between splittings when cohomology of groups are isomorphic.
In $\S 4-\S 7$,  we give the stable splitting of the cohomology of $H^*(P)$ for the elementary abelian
group $\bZ/p\times \bZ/p$, 
metacyclic groups, $C(r)$ groups (such that $C(3)=p_+^{1+2}$), 
and $G(r',e)$ groups which appeared in the  classification of $rank_p(P)=2$ groups for $p\ge 5$ respectively.
 However, some  parts in $\S 4,\S 6$ are still given in 
[Hi-Ya1].  In $\S 8$, we study the relation among groups studied in $\S 5-\S 7$.
In $\S 9$ we study the nilpotent ideal parts in $H^{ev}(P)=H^{even}(P;\bZ)/p$ for all groups in $\S 4-\S 7$.
In $\S 10$, we note  the relation to the Chow ring $CH^*(BP)/p$ and $H^{ev}(P)$, and note that
the Chow group of  the direct summand $X_i$ is represented by some motive of the classifying space $BP$.

\section{the double Burnside algebra and stable splitting}

Let us fix an odd prime $p$ and $k=\bF_p$.  
For finite groups $G_1$, $G_2$, let 
 $A_{\bZ}(G_1,G_2)$ be the double Burnside group defined by
the Grothendieck group generated by $(G_1,G_2)$-bisets.
Each element $\Phi$ in $A_{\bZ}(G_1,G_2)$ is generated by elements
$[Q,\phi]=(G_1\times_{(Q,\phi)}G_2)$
for some subgroup $Q\le G_1$ and a homomorphism
$\phi:Q\to G_2.$
In this paper, we use the notation
\[[Q,\phi]=\Phi :G_1\ge Q\stackrel{\phi}{\to}G_2.\]

For each element $\Phi=[Q,\phi]\in A_{\bZ}(G_1,G_2)$, we can define a map  from  $H^*(G_2;k)$ 
to $H^*(G_1;k)$ by
\[ x\cdot \Phi=x\cdot [Q,\phi]=Tr_Q^{G_1}\phi^*(x)
\quad for \ x\in H^*(G_2;k).\]

When $G_1=G_2$, the group $A_{\bZ}(G_1,G_2)$ has the natural ring structure,
and call it the (integral) double Burnside algebra. 
In particular, for a finite group $G$, 
we have an $A_{\bZ}(G,G)$-module structure
on $H^*(G;k)$ (and $H^*(G;\bZ)/p$).

Recall  Quillen's theorem
such that the restriction map
\[ H^*(G;k)\to \lim_{V}H^*(V;k)\]
is an F-isomorphism (i.e. the kernel and cokernel are nilpotent) where $V$ ranges
elementary abelian $p$-subgroups of $G$.
We easily see ([Hi-Ya1]) 
\begin{lemma}
Let $\sqrt{0}$ be the nilpotent ideal in $H^*(G;k)$
(or $H^*( G;\bZ)/p$).
Then $\sqrt{0}$ itself is an $A_{\bZ}(G,G)$-module.
\end{lemma}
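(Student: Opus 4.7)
The plan is to show that $\sqrt{0}$ is stable under every generator $[Q,\phi]$ of $A_{\bZ}(G,G)$, whose action is $x\cdot[Q,\phi]=\Tr_Q^G\phi^*(x)$. Since $\phi^*\colon H^*(G;k)\to H^*(Q;k)$ is a ring homomorphism, it sends nilpotents to nilpotents automatically, so the real task is to verify that $\Tr_Q^G$ carries $\sqrt{0}_Q$ into $\sqrt{0}_G$.

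For that I would invoke Quillen's F-isomorphism theorem recalled just above: an element $y\in H^*(G;k)$ lies in $\sqrt{0}$ if and only if $\res_V^G(y)\in\sqrt{0}_V$ for every elementary abelian $p$-subgroup $V\le G$, because modulo the respective nilradicals the restriction map lands in a product of polynomial rings, which are reduced. Now the Mackey double coset formula gives
\[
\res_V^G\,\Tr_Q^G(z)=\sum_{g\in V\backslash G/Q}\Tr_{V\cap{}^gQ}^V\,c_g^*\,\res_{V^g\cap Q}^Q(z),
\]
and since $\res$ and $c_g^*$ are ring homomorphisms that preserve nilpotence, the problem reduces to the following sub-claim: for any inclusion $W\le V$ of elementary abelian $p$-groups, $\Tr_W^V$ sends $\sqrt{0}_W$ into $\sqrt{0}_V$.

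To settle the sub-claim I would use that for odd $p$ and $W$ elementary abelian of rank $m$ one has $H^*(W;k)=\Lambda(x_1,\dots,x_m)\otimes k[\beta x_1,\dots,\beta x_m]$, and $\sqrt{0}_W$ is the ideal generated by the degree-one classes $x_i\in H^1(W;\bF_p)=\Hom(W,\bF_p)$. Each such linear functional on $W$ extends to a linear functional on $V$, so there exist $\tilde x_i\in H^1(V;k)\subset\sqrt{0}_V$ with $\res_W^V(\tilde x_i)=x_i$. The projection formula then gives
\[
\Tr_W^V(x_i\cdot a)=\Tr_W^V(\res_W^V(\tilde x_i)\cdot a)=\tilde x_i\cdot\Tr_W^V(a)\in\sqrt{0}_V,
\]
handling all ideal generators of $\sqrt{0}_W$, hence $\sqrt{0}_W$ itself by additivity.

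The main obstacle is precisely this last sub-claim for elementary abelian pairs $W\le V$; the reduction via Mackey and Quillen is formal once that is in hand. For the variant with $H^*(G;\bZ)/p$, mod-$p$ reduction embeds this ring into $H^*(G;\bF_p)$ compatibly with restrictions and transfers, so its nilradical is $\sqrt{0}_G\cap H^*(G;\bZ)/p$ and inherits stability from the $\bF_p$ case.
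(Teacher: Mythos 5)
Your argument is correct, and it supplies a proof that the paper itself omits: the text only says ``we easily see ([Hi-Ya1])'' and records no argument for this lemma. Your reduction is the natural one. Stability under $\phi^*$ is automatic since ring homomorphisms preserve nilpotents, so everything rests on showing $\Tr_Q^G(\sqrt{0}_Q)\subset\sqrt{0}_G$; Quillen's F-isomorphism (together with the finiteness of the set of conjugacy classes of elementary abelian subgroups, which lets you pass from ``all restrictions nilpotent'' to ``some power restricts to zero everywhere, hence lies in the nilpotent kernel'') plus the Mackey formula reduces this to transfers between elementary abelian subgroups, and there your use of the surjectivity of $\Hom(V,\bF_p)\to\Hom(W,\bF_p)$ together with the projection formula correctly places $\Tr_W^V(\sqrt{0}_W)$ inside the ideal generated by degree-one classes of $H^*(V;k)$, which is nilpotent for $p$ odd (and zero for $p=2$). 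The remark on the integral case is also fine, since $H^*(G;\bZ)/p$ injects into $H^*(G;\bF_p)$ compatibly with restrictions and transfers, so its nilradical is the intersection with $\sqrt{0}$ of the mod~$p$ cohomology. I see no gap.
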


 In this paper we first (in $\S 4-\S 8$) consider the cohomology
modulo nilpotent elements,
since it is not so complicated  from the above Quillen's theorem.
 Hence  we write
it simply
\[H^*(G)=H^*(G;\bZ)/(p,\sqrt{0}).\]
However we also compute $H^{even}(G;\bZ)/p$ in $\S 9$ below.

By the preceding lemma,  we see that $H^*(G)$ has the $A_{\bZ}(G,G)$-module structure.
(Here note that $A_{\bZ}(G,G)$ acts on unstable cohomology.)
For ease of notations and arguments, when there is an  $A_{\bZ}(G,G)$-filtration 
$F_1\subset ...\subset F_n\cong H^*(P)$ such that
\[ grH^*(P)=\oplus F_{i+1}/F_i\cong \oplus m_iM_i\quad for\ *>0\]
with simple  $A_{\bZ}(G,G)$-modules  $M_i$, we simply
write
\[H^*(G)\leftrightarrow \oplus m_iM_i.\]
Throughout this paper, we assume that degree $*>0$ (or we consider $H^*(-)$ as the reduced theory $\tilde H^*(-)$).
(We consider $H^*(G)$ as an element in $K_0(Mod(A_{\bZ}(G,G)))$.)
In this paper, $H^*(G)\cong A$ for an graded ring $A$ means an graded module isomorphism
otherwise stated, while (in most cases) it  is induced from the ring isomorphism $grH^*(G)\cong A$ for some filtrations of $H^*(G)$.

Let $BG=BG_p$ be the $p$-completion of the classifying space of $G$.
Recall that $\{BG,BG\}_p$ is the ($p$-completed) group generated 
by stable homotopy self maps.
It is well known from the Segal conjecture (Carlsson's theorem)
that this group is isomorphic to the double Burnside group $A_{\bZ}(G_1,G_2)^{\wedge}$
completed by the augmentation  ideal.

Since the transfer is represented as a stable homotopy map $Tr$,
an element $\Phi=[Q,\phi]\in A(G_1,G_2)$ is represented as  a map
$\Phi\in \{BG_1,BG_2\}_p$
\[ \Phi: BG_1 \stackrel{Tr}{\to}
BQ\stackrel{B\phi}{\to}
BG_2. \]
(Of course, the action for $x\in H^*(G_2)$ is given by 
$Tr_{Q}^{G_1}\phi^*(x)$ as stated.)

Let us write
\[ A(G_1,G_2)=A_{\bZ}(G_1,G_2)\otimes k\quad (k=\bZ/p).\]
Hereafter we consider the cases $G_i=P$ ; $p$-groups.
Given a primitive idempotents decomposition of the unity of $A(P,P)$
\[1=e_1+...+e_n,\]
we have an indecomposable stable splitting
\[BP\cong X_1\vee...\vee X_n\quad with\ e_iBP=X_i.\]
In this paper,  an isomorphism $X\cong Y$ for spaces means that
it is a stable homotopy equivalence.

Recall that
\[M_i=A(P,P)e_i/(rad(A(P,P)e_i)\]
is a simple $A(P,P)$-module where $rad(-)$ is the  Jacobson radical.
By Wedderburn's theorem, the above
decomposition is also written as
\[ BP\cong \vee_{j}(\vee_{k} X_{jk})=\vee_{j}m_{j}X_{j1}\quad where\ m_{j}=dim(M_j)\]
for $A(P,P)e_{jk}/rad(A(P,P)e_{jk})\cong M_j$.
Therefore the  stable splitting of $BP$ is completely determined by
the idempotent decomposition of  the unity in the double Burnside algebra $A(P,P)$.

 For a simple $A(P,P)$-module $M$, define a stable summand
$X(M)$ by
\[ e_M=\sum_{M_i\cong M}e_i,\quad X(M)=\vee _{M_{jk}\cong M}X_{jk}=e_MBP.\]
Here $X(M)$ is only defined in the stable homotopy category.
(So strictly, the cohomology ring $H^*(X(M))$ is not defined.) However
we can define $H^*(X(M))$ as a graded submodule of the
cohomology ring $H^*(P)$  by
\[H^*(X(M))=H^*(P)\cdot e_M \quad (=e_M^*H^*(P)\ stabely)\]
where we think $e_M\in A(P,P)$ (rather than $e_M\in
\{ BP,BP\}$).
\begin{lemma}  Given a simple $A(P,P)$-module $M$, the cohomology\\
$grH^*(X(M))$ is isomorphic to a sum of $M$, i.e., (for $*>0$)
\[ H^*(X(M))\leftrightarrow \oplus_{i=1} M[k_i],\qquad  0\le k_1\le...\le k_s\le...\]
where $[k_s]$ is the operation ascending degree $k_s$.
\end{lemma}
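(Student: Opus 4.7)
The strategy is to build a composition series of $H^*(P)$ one degree at a time, then apply the projector $(\cdot)\,e_M$ to extract the $M$-isotypic part.

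First, since each graded component $H^n(P)$ is finite-dimensional over $k$, it admits a finite composition series as a right $A(P,P)$-module. Concatenating these composition series over $n=1,2,3,\ldots$ in increasing order of degree yields a filtration
\[
0 = F_0 \subset F_1 \subset F_2 \subset \cdots \subset H^*(P)
\]
by graded $A(P,P)$-submodules whose associated graded is $\bigoplus_s S_{j(s)}[k_s]$, a direct sum of shifted simple right $A(P,P)$-modules with non-decreasing shifts $0 \le k_1 \le k_2 \le \cdots$.

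Second, apply $(\cdot)\,e_M$ to this filtration. The inherited sequence $\{F_s \cdot e_M\}$ filters $H^*(X(M)) = H^*(P)\cdot e_M$, and its associated graded is $\bigoplus_s S_{j(s)}[k_s]\cdot e_M$. The key algebraic fact is that in the semisimple quotient $A(P,P)/\rad(A(P,P))$, the element $e_M = \sum_{M_i \cong M} e_i$ descends to the identity of the Wedderburn block for $M$ (and is zero in every other block), since the primitive idempotents $e_i$ giving isomorphic simples are precisely those lying in one Wedderburn factor. Consequently $(\cdot)\,e_M$ acts as the identity on $M$ itself and annihilates every non-isomorphic simple, so only the subquotients with $S_{j(s)} \cong M$ survive. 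After reindexing, $\mathrm{gr}\,H^*(X(M)) \cong \bigoplus_i M[k_i]$ with non-decreasing shifts, as claimed.

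The principal point to verify is precisely this behavior of $e_M$ modulo the radical: that $\sum_{M_i \cong M} e_i$ is the block-identity idempotent for $M$, so that right multiplication by $e_M$ really is a clean projector onto the $M$-isotypic part on the simple subquotients. A secondary bookkeeping item is the left/right convention mismatch (the $M_i$ are defined in the excerpt as left simples, whereas $H^*(P)$ carries a right action), but the natural bijection between left and right simples coming from the Wedderburn decomposition makes the symbol $M$ unambiguous. Everything else is routine.
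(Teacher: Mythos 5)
Your proof is correct and takes essentially the same approach as the paper's: both arguments come down to the orthogonality $e_{M'}e_M=0$ of the block idempotents together with the fact that, modulo the radical, $e_M$ acts as the identity on simples isomorphic to $M$ and as zero on all others. The paper states this as ``$e_{M'}$ annihilates $H^*(X(M))$ for every $M'\not\cong M$, hence no such $M'$ occurs as a factor,'' while you run the equivalent computation by applying $(\cdot)e_M$ to a composition series of $H^*(P)$; your extra bookkeeping (the filtration $\{F_se_M\}$ and the identification of its subquotients with $S_{j(s)}e_M$) is sound and only makes explicit what the paper leaves implicit.
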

\begin{proof}
Let $M'$ be a simple $A(P,P)$-module such that $M'\not \cong M$.
Then
\[e_{M'}X(M_i)=e_{M'}e_{M}BP=pt.\]
Hence $e_{M'}:H^*(P)\to H^*(P)$ restricts
$e_{M'}|H^*(X(M))=0$ (we assumed $*>0$).  This means that $H^*(X(M))$ does not contain
$M'$ as a  summand.
\end{proof}

From Benson-Feshbach [Be-Fe] and Martino-Priddy [Ma-Pr], it is known that each simple 
$A(P,P)$-module
is written as
\[ S(P,Q,V)\quad for \ Q\le P,\ and \ V: simple \ k[Out(Q)]-module.\]
(In fact $S(P,Q,V)$ is simple or zero.) 
Moreover, we have  (see [Be-Fe])
an isomorphism 
\[S(P,Q,V)\cong V\cdot  A(P,Q)/J(Q,V)\]
for some $A(P,P)$-submodule $J(Q,V)$ of $V\cdot A(P,Q)$.

Thus we have the main theorem of stable splitting of $BP$.
\begin{thm}(Benson-Feshbach [Be-Fe], Martino-Priddy [Ma-Pr])
There are indecomposable stable spaces $X_{S(P,Q,V)}$ for $S(P,Q,V)\not =0$ such that
\[BP\cong  \vee X(S(P,Q,V))\cong  \vee (dimS(P,Q,V))X_{S(P,Q,V)}.\]
\end{thm}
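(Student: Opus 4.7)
The plan is to reduce the statement to the case-by-case analysis carried out in Sections 5--7 and consolidated in Section 8 (the referenced Lemma 8.1 and Theorems 8.2, 8.5). By the classification of $p$-groups with $rank_p(P) = 2$ for $p \geq 5$, any $P$ satisfying the hypotheses is isomorphic to one of $C(r)$ with $r \geq 3$ (where $C(3) = p_{+}^{1+2}$) or $G(r',e)$. Thus it suffices to treat these two families separately, and the $C(3)$ case is trivial.

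For each irreducible summand $X_i(P)$, Theorem 2.3 associates a simple $A(P,P)$-module $M_i = S(P,Q,V)$, and Lemma 2.4 gives $H^*(X_i(P)) \leftrightarrow \oplus_s M_i[k_s]$. Since $H^*(-) = H^*(-;\bZ)/(p,\sqrt{0})$, Quillen's F-isomorphism reduces $H^*(P)$, and hence each simple factor $M_i$, to data carried by the elementary abelian $p$-subgroups $V \leq P$ together with their Weyl groups $W_P(V) = N_P(V)/C_P(V)$. The structural observation -- already the backbone of Section 8 -- is that for every $P$ in the two remaining families, the poset of rank-$2$ elementary abelians in $P$ together with its Weyl-group action maps, via inclusion of a canonical subgroup or via quotient by a central cyclic subgroup, onto the corresponding data in $p_{+}^{1+2}$.

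With this comparison in hand, the main step is to translate it into a decomposition of simple modules. Each non-nilpotent simple $A(P,P)$-module $S(P,Q,V)$ pulls back along the comparison to a direct sum of simple $A(p_{+}^{1+2}, p_{+}^{1+2})$-modules $S(p_{+}^{1+2}, Q', V')$; the index set $J(i,P)$ in the theorem is precisely the list of pairs $(Q', V')$ so obtained. Applying Lemma 2.4 term by term on both sides then identifies $H^*(X_i(P))$ with $\bigoplus_{j \in J(i,P)} H^*(X_j(p_{+}^{1+2}))$ as graded modules, since each summand $H^*(X_j(p_{+}^{1+2}))$ is likewise a sum of degree-shifts of its attached simple module.

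The main obstacle is the explicit verification in the $G(r',e)$ case, where the Weyl groups and outer automorphism groups of the subgroups $Q$ can differ from those arising for $p_{+}^{1+2}$: one must branch each simple $k[\mathrm{Out}(Q)]$-module $V$ under the relevant subgroup inclusion and check that every branch matches the simple datum attached to some $X_j(p_{+}^{1+2})$. This is the technical heart of Sections 6 and 7, and is what forces the restriction $p \geq 5$ (so that $\mathrm{Out}(Q)$-representation theory behaves semisimply in the relevant range). A secondary point is checking compatibility of the $A(P,P)$-filtration on $H^*(P)$ with the claimed direct-sum decomposition; this is ensured by applying the primitive idempotents $e_M$ as in Lemma 2.4, which split off each isotypic piece as an honest summand of the graded module $H^*(P)$.
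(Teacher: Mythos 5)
Your proposal does not address the statement in question. Theorem 2.3 is the general Benson--Feshbach/Martino--Priddy splitting theorem: for an \emph{arbitrary} $p$-group $P$ (any prime, any rank), $BP$ decomposes as a wedge of indecomposable summands $X_{S(P,Q,V)}$, each appearing with multiplicity $\dim S(P,Q,V)$, indexed by the nonzero simple modules of the double Burnside algebra $A(P,P)$. Your argument instead proves (a sketch of) Theorem 1.1/Theorem 8.5 --- the assertion that for $\mathrm{rank}_p P=2$, $p\ge 5$, each $H^*(X_i(P))$ is a direct sum of the pieces $H^*(X_j(p_+^{1+2}))$. The classification of rank-two $p$-groups, the families $C(r)$ and $G(r',e)$, the comparison with $p_+^{1+2}$, and the restriction $p\ge 5$ are all irrelevant to Theorem 2.3, which carries no rank hypothesis and no restriction on $p$.

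The route the paper actually takes (following [Be-Fe], [Ma-Pr]) is purely algebraic and short: by Carlsson's resolution of the Segal conjecture, $\{BP,BP\}_p$ is the completed double Burnside ring, so a primitive idempotent decomposition $1=e_1+\cdots+e_n$ in $A(P,P)$ yields an indecomposable stable splitting $BP\cong X_1\vee\cdots\vee X_n$ with $X_i=e_iBP$; Wedderburn theory groups the primitive idempotents according to the isomorphism class of the simple quotient $M_j=A(P,P)e_j/\mathrm{rad}(A(P,P)e_j)$, with exactly $\dim M_j$ of them in each class, giving $BP\cong\vee_j(\dim M_j)X_{j1}$; and the classification of the simple $A(P,P)$-modules as the nonzero $S(P,Q,V)$ for $Q\le P$ and $V$ a simple $k[\mathrm{Out}(Q)]$-module supplies the indexing. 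None of these three inputs appears in your proposal, so as a proof of Theorem 2.3 it has no content; you should restart from the idempotent/Wedderburn argument rather than from the rank-two case analysis.
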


The direct summands $X_{S(P,P,V)}$ are called dominant summands ([Ni], [Ma-Pr]).  Let $X_{S=S(P,Q,V)}$ be a 
non-dominant summand for a proper subgroup $Q$.  Then it is known ([Ni], [Ma-Pr]) that the corresponding idempotent
$e_S\in A(P,P)$ is generated by elements  $P>Q\stackrel{\phi}{\to} P$ and
$P\to Q\to P$.  Hence
when there is no non-trivial map $P\to Q$, we see
\[ H^*(X_S)\cong e_SH^*(BP)\subset Tr_Q^PH^*(Q),\]
that is, $x\in H^*(X_S)$ if and only if
$\sum Tr_Q^P\phi^*(x)=x$.

\section{relation among groups $P$}

Let $R$ be a subring of $A(P,P)$.  For a simple $R$-module $S_R$, we can define
the idempotent $e_{S_R}$ and the stable space $Y_{S_R}=e_{S_R}BP$ which decomposes
$BP$, while it is (in general) not irreducible.  In particular, we take the group algebra of the outer automorphism group
$Out(P)$ as the ring $R$.  
\begin{lemma}
For each $Out(P)$-simple module $S_{R_i}$ with dimension $n_i$, 
let us write by 
\[ BP=n_1Y_1\vee... \vee n_sY_s\quad where \ Y_i=e_{S_{R_i}}BP\]
the decomposition for idempotents in $Out(P)$.
Then  each $Y_i$ decomposes
\[ Y_i=X_{i_1}\vee ...\vee X_{i_m} \quad for \ X_{ij}=e_{S_{ij}}BP\]
where $e_{S_{ij}}$ are idempotents  in $A(P,P)$. 
\end{lemma}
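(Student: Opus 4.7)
The plan is to regard $e_{S_{R_i}}$ as an idempotent in the whole algebra $A(P,P)$ and refine it into a sum of orthogonal idempotents coming from the primitive decomposition there; applying the stable splitting correspondence then yields the desired wedge decomposition of $Y_i$.

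First, I would verify that $R=k[\Out(P)]$ embeds as a subring of $A(P,P)$. For every $\sigma\in \Out(P)$, choose a lift $\tilde\sigma\in \Aut(P)$ and set $\iota(\sigma)=[P,\tilde\sigma]\in A(P,P)$. Because inner automorphisms give rise to self-maps of $BP$ stably homotopic to the identity, $\iota(\sigma)$ is independent of the choice of lift, and the relation $[P,\tilde\sigma]\cdot[P,\tilde\tau]=[P,\widetilde{\sigma\tau}]$ makes $\iota$ a ring homomorphism. In particular, the idempotent $e_{S_{R_i}}\in R$ is automatically an idempotent of $A(P,P)$.

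Since $A(P,P)$ is a finite dimensional $k$-algebra, the standard lifting of idempotents permits a decomposition
\[ e_{S_{R_i}} = f_{1}+\cdots+f_{r} \]
into mutually orthogonal primitive idempotents $f_{a}\in A(P,P)$. Collecting the $f_{a}$'s whose primitive summand $A(P,P)f_{a}$ has simple top isomorphic to a fixed simple $A(P,P)$-module $S_{ij}$, I obtain an orthogonal refinement
\[ e_{S_{R_i}}=e_{S_{i_1}}+\cdots+e_{S_{i_m}} \]
inside $A(P,P)$, where each $e_{S_{ij}}$ is an idempotent of the form $\sum_{M_k\cong S_{ij}}e_k$ following the notation $e_M=\sum_{M_k\cong M}e_k$ recalled in $\S 2$.

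Applying the classifying space functor and using that $e\mapsto eBP$ converts orthogonal idempotent decompositions into wedge splittings, I conclude
\[ Y_i = e_{S_{R_i}}BP = e_{S_{i_1}}BP\vee\cdots\vee e_{S_{i_m}}BP = X_{i_1}\vee\cdots\vee X_{i_m}, \]
which is precisely the asserted splitting. The only step demanding any thought is the well-definedness of $\iota$, namely the vanishing of inner automorphisms in $A(P,P)$; everything else is a routine idempotent lifting in a finite dimensional algebra combined with the Benson--Feshbach / Martino--Priddy stable splitting correspondence recalled in $\S 2$.
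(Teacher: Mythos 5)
Your argument is correct. Note, however, that the paper offers no proof of this lemma at all --- it is stated as an immediate consequence of the general idempotent formalism set up in \S 2 and \S 3 --- so what you have written is precisely the routine verification the authors left implicit: $k[Out(P)]$ embeds in $A(P,P)$ via $\sigma\mapsto[P,\tilde\sigma]$ (well defined because $[P,\phi]=[P,\phi']$ exactly when $\phi,\phi'$ agree in $Out(P)$), any idempotent of the finite-dimensional algebra $A(P,P)$ refines into orthogonal primitive idempotents, and orthogonal decompositions of idempotents translate into wedge splittings of the corresponding stable summands. The only cosmetic quibble is that no ``lifting'' of idempotents is needed here, merely the decomposition of the indecomposable summands of $A(P,P)e_{S_{R_i}}$; and, as in the paper itself, the passage from idempotents of $A(P,P)=A_{\bZ}(P,P)\otimes k$ to actual stable self-maps of $BP$ silently uses the completion coming from the Segal conjecture, which both you and the authors take for granted.
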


When $P$,$P'$ are different $p$ groups, the stable homotopy types
of $BP,BP'$ are different [Ni].  However there are many
cases with
$H^*(P)\cong H^*(P')$
 (However note that it seems not so often  that $H^*(P;\bZ)\cong  H^*(P';\bZ)$
even if we do not assume the map $P\to P'$
which induces the isomorphism on cohomology.)
The following corollary is immediate from the above lemma.

\begin{cor}
Let $P,P'$ are $p$-groups with $i_H:H^*(P)\cong H^*(P')$.
Assume that there is a ring map $i_A:A(P,P)\to  A(P',P')$
such that  $i_H(\Phi(x))=i_A(\Phi)i_H(x)$ for all $\Phi\in A(P,P)$
and $x \in H^*(P)$.  Then
for each splitting summand $X_i$ in $BP$, there are splitting summands  $X_{ij}'$
of $BP'$ such that
\[ i _H^*(X_i)=H^*(X_{i_{1}}')\oplus ...\oplus 
H^*(X_{i_{s}}').\]
\end{cor}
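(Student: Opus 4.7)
My plan is to transport the primitive idempotent $e_i \in A(P,P)$ defining $X_i$ across the ring map $i_A$, refine the resulting idempotent into primitive idempotents of $A(P',P')$, and then use the intertwining hypothesis to identify the image $i_H(H^*(X_i))$.

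First, by Theorem 2.2 (Benson--Feshbach, Martino--Priddy), each indecomposable summand $X_i$ of $BP$ is cut out by a primitive idempotent $e_i \in A(P,P)$, with $H^*(X_i) = H^*(P) \cdot e_i$ as a graded submodule of $H^*(P)$. Applying $i_A$ produces an idempotent $f := i_A(e_i) \in A(P',P')$. Since $A(P',P')$ is a finite-dimensional $k$-algebra, I would decompose $f = e'_{j_1} + \cdots + e'_{j_s}$ as a sum of pairwise orthogonal primitive idempotents of $A(P',P')$; each $e'_{j_k}$ then cuts out an indecomposable summand $X'_{j_k} = e'_{j_k} BP'$ of $BP'$ with $H^*(X'_{j_k}) = H^*(P') \cdot e'_{j_k}$.

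Next, I would apply the intertwining hypothesis with $\Phi = e_i$: for every $x \in H^*(P)$,
\[ i_H(x \cdot e_i) \;=\; i_H(x) \cdot f, \]
so $i_H(H^*(P) \cdot e_i) \subseteq H^*(P') \cdot f$. Bijectivity of $i_H$ upgrades this to an equality, since any $y \cdot f \in H^*(P') \cdot f$ with $y = i_H(x)$ equals $i_H(x \cdot e_i)$. Combined with the orthogonality of the $e'_{j_k}$, this yields
\[ i_H\bigl( H^*(X_i) \bigr) \;=\; H^*(P') \cdot f \;=\; \bigoplus_{k=1}^{s} H^*(P') \cdot e'_{j_k} \;=\; \bigoplus_{k=1}^{s} H^*(X'_{j_k}), \]
which is the claimed decomposition.

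The only point deserving care is the refinement of $f$ into idempotents of $A(P',P')$ that are primitive in the ambient algebra rather than merely in the corner $f A(P',P') f$. This is standard for finite-dimensional algebras, since primitivity is intrinsic: if one writes $e'_{j_k} = a + b$ in $A(P',P')$ with $a,b$ orthogonal idempotents, then $a = e'_{j_k} a \, e'_{j_k}$ and $b = e'_{j_k} b \, e'_{j_k}$ both lie in $f A(P',P') f$, where $e'_{j_k}$ is already primitive by construction. I therefore do not foresee any substantial obstacle; the corollary is essentially a formal consequence of the $A$-equivariance of $i_H$, with the Artinian character of $A(P',P')$ supplying the required refinement step.
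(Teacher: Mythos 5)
Your proposal is correct and follows essentially the same route as the paper: the paper's proof is the single chain of equalities $i_H H^*(X_i)=i_H(e_iH^*(P))=i_A(e_i)i_HH^*(P)=i_A(e_i)H^*(P')$, with the decomposition of the idempotent $i_A(e_i)$ into primitive idempotents of $A(P',P')$ left implicit. You have merely spelled out that refinement step and the orthogonality explicitly, which is a faithful elaboration rather than a different argument.
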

\begin{proof} We get the result from
\[i_HH^*(X_i)=i_H(e_iH^*(P))=i_A(e_i)i_HH^*(P)=i_A(e_i)H^*(P').\]
\end{proof}
\begin{prop}
Let $f:BP\to BG$ be a map such that
$f^*:H^*(G)\to H^*(P)$ is injective.  Let
$BP=\vee X_i(P)$ and $BG=\vee X_j(G)$ be the irreducible
decompositions.  For each $X_j(G)$, there are $i_1,..,i_s$
such that
\[ H^*(X_j(G))\cong f^*H^*(G)\cap(H^*(X_{i_1}(P)\vee ...\vee X_{i_s}(P)).\]
\end{prop}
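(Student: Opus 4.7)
The plan is to produce the index set $\{i_1,\ldots,i_s\}$ from the interaction of $f^*$ with the idempotent decompositions of the double Burnside algebras. Recall that the irreducible stable splittings of $BP$ and $BG$ correspond to primitive orthogonal idempotent decompositions $1=\sum_i e_i^P\in A(P,P)$ and $1=\sum_j e_j^G\in A(G,G)$, with $H^*(X_i(P))=(e_i^P)^*H^*(P)$ and $H^*(X_j(G))=(e_j^G)^*H^*(G)$; contravariantly these give the direct sum decompositions $H^*(P)=\oplus_i H^*(X_i(P))$ and $H^*(G)=\oplus_j H^*(X_j(G))$.

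First I would define
\[
I_j = \{\, i : (e_i^P)^*f^*(H^*(X_j(G))) \neq 0 \,\},
\]
i.e.\ the indices of summands of $H^*(P)$ that receive a nonzero projection from $f^*(H^*(X_j(G)))$. The easy inclusion $f^*(H^*(X_j(G))) \subseteq \oplus_{i\in I_j} H^*(X_i(P))$ is immediate: for any $x\in H^*(X_j(G))$, the partition of unity $\sum_i (e_i^P)^* = \mathrm{id}$ gives $f^*(x)=\sum_{i\in I_j}(e_i^P)^*f^*(x)$ since the terms with $i\notin I_j$ vanish. Combined with $f^*(H^*(X_j(G)))\subseteq f^*H^*(G)$, this yields one containment.

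For the reverse inclusion, take $z=f^*(y)$ in the intersection, decompose $y=\sum_k y_k$ with $y_k\in H^*(X_k(G))$, and aim to show $y_k=0$ for $k\neq j$. Here I would exploit the bimodule structure of $A(P,G)$ over $(A(G,G),A(P,P))$: the map $f$ corresponds to an element of $A(P,G)$ with decomposition $f=\sum_{j',i'} e_{j'}^G \cdot f \cdot e_{i'}^P$, and each block $(e_{j'}^G \cdot f \cdot e_{i'}^P)^*$ is a graded map $H^*(X_{j'}(G))\to H^*(X_{i'}(P))$, whose sum over $i'$ is $f^*|_{H^*(X_{j'}(G))}$. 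The hypothesis $(e_i^P)^*f^*(y)=0$ for $i\notin I_j$ then translates into block-level equations on the $y_k$, which together with the injectivity of $f^*$ and the orthogonality of the $e_k^G$ should force $y_k=0$ for $k\neq j$.

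The main obstacle will be making this decoupling rigorous: a priori, the contributions of the $f^*(y_k)$ for $k\neq j$ could collectively cancel inside $\oplus_{i\in I_j}H^*(X_i(P))$ without each $y_k$ individually vanishing. Ruling this out relies crucially on the injectivity of $f^*$ combined with the directness of $H^*(G)=\oplus_k H^*(X_k(G))$ as graded $A(G,G)$-submodules: applying the idempotents $(e_k^G)^*$ on the $G$-side before $f^*$ isolates each $y_k$, so that the cancellation question reduces to injectivity of $f^*$ on each single summand $H^*(X_k(G))$, forcing the unwanted $y_k$ to vanish and giving the required identification up to the isomorphism induced by $f^*$.
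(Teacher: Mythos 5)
Your setup (defining $I_j=\{\,i: (e_i^P)^*f^*(H^*(X_j(G)))\neq 0\,\}$ and proving the inclusion $f^*H^*(X_j(G))\subseteq \oplus_{i\in I_j}H^*(X_i(P))$) is fine, and you have correctly located the difficulty in the reverse inclusion. But the resolution you sketch does not close the gap. Writing $y=\sum_k y_k$ with $y_k\in H^*(X_k(G))$ and assuming $z=f^*(y)\in\oplus_{i\in I_j}H^*(X_i(P))$, what you must rule out is that some $y_k\neq 0$ with $k\neq j$ nevertheless has $f^*(y_k)$ supported entirely on the indices in $I_j$. Injectivity of $f^*$ on $H^*(X_k(G))$ only gives $f^*(y_k)\neq 0$; it says nothing about which summands $H^*(X_i(P))$ the element $f^*(y_k)$ lies in. ``Applying $(e_k^G)^*$ before $f^*$'' isolates $y_k$ on the $G$-side, but the datum you are handed is the element $z\in H^*(P)$, and the idempotents of $A(G,G)$ do not act on $H^*(P)$ through $f^*$ in any way you have established; so this step does not produce the block-level equations you claim. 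What is actually needed is that the index sets $I_k$ are pairwise disjoint: then projecting $z$ onto $\oplus_{i\in I_k}H^*(X_i(P))$ for $k\neq j$ recovers exactly $f^*(y_k)$ and forces it to vanish, and injectivity finishes.

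That disjointness is precisely the one idea in the paper's (very short) proof that your argument omits: the paper first observes that each indecomposable summand $X_i(P)$ is carried by $f$ into a single indecomposable summand $X_j(G)$ (``otherwise $X_i(P)$ should be decomposed''), i.e., for each $i$ the composite $X_i(P)\to BP\to BG\to X_k(G)$ is essential on cohomology for at most one $k$. This is a statement about indecomposability of the stable summands, not a formal consequence of the injectivity of $f^*$, and it is what makes the sum $\sum_k f^*H^*(X_k(G))$ compatible with a partition of the summands of $H^*(P)$, after which the intersection in the statement is identified with $f^*H^*(X_j(G))\cong H^*(X_j(G))$. Without supplying this decoupling statement (or an equivalent), your reverse inclusion does not go through, and indeed with overlapping $I_k$ the intersection taken over your $I_j$ could be strictly larger than $f^*H^*(X_j(G))$.
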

\begin{proof}
We note each $f(X_i(P))$ is contained some $X_j(G)$
otherwise $X_i(P))$ should be decomposed.  
Let $f(X_{i_k}(P))\subset X_j(G)$. Then  we have a map
\[ f^*: H^*(X_j(G)) \to H^*(X_{i_1}(P)\vee.... \vee X_{i_s}(P)).\]
Since $f^*$ is injective, we have the proposition.
\end{proof}
If $G$ has Sylow $p$-group isomorphic to $P$,  then
of course the above proposition holds.  Moreover we consider
the cases that $P\subset G$ and $G$ is also a $p$-group
satisfying the above proposition in $\S 5$ below.

         \section{$A=\bZ/p\times \bZ/p$ for $p\ge 3$}

In this section, we recall  the decomposition of
the cohomology of $\bZ/p\times \bZ/p$, which  is still given 
 $\S 5$ in [Hi-Ya1].  However the result is not so trivial, and we write it briefly.  (The results used in the other sections are only Lemma 4.1 and Theorem 4.4.)

We recall the cohomology  
\[H^*(A)\cong k[u,y],\quad for \ A=\la a,b\ra \cong \bZ/p\times \bZ/p,\quad 
\]
where $u=c_1(e_a)$ is the first Chern class of a
non zero
linear representation $e_a:A\to \la a\ra\to \bC^{\times}$
and $y=c_1(e_b)$ is defined similarly.

At first we consider the case $B=\la b\ra\cong \bZ/p$
with $H^*(B)\cong k[y]$. The outer automorphism $Out(B)\cong \bF_p^{*}$ and its simple modules are written
$S_i=k\{y^i\}$ for $0\le i\le p-2$.
(Here we use the notation that $R\{x,y,...\}$ is the $R$-free
module generated by $x,y,...$.)
The summand  $L(1,i)$ is defined as $X(S(B,B,S_i))=X_{S(B,B,S_i)}$
and $H^*(L(1,i))\cong k[Y]\{y^i\}$ where $Y=y^{p-1}$.
 Hence we can decompose
\[ B\la b\ra \cong \vee _{i=0}^{p-2}L(1,i),\quad  H^*(L(1,i))\cong k[Y]\{y^i\}\quad with\ Y=y^{p-1}.\]

The outer automorphism $Out(A)\cong GL_2(k)$ and 
its simple modules are written as $S(A)^i\otimes det^j$
for $0\le i\le p-1, 0\le j\le p-2$. 
Here
\[ S^i(A)=H^i(A)\cong k\{y^i,y^{i-1}u,...,u^i\},\quad dim(S(A)^i)=i+1\]
and $det$ is the determinate  representation.
Let us write $L(1,i)=X_{S(P,\bZ/p,S_i)}$, that is the image of the same named component by the 
 split projections $A\to \la ab^{\lambda}\ra$, $\la b\ra$ $0\le \lambda\le p-1$.  Note that $Tr_A^P(x)=0$
for all $x$. Hence we can show  (e.g., Harris-Kuhn [Ha-Ku])  
\[BA\cong  \vee_{i,q}(i+1)\tilde X_{i,q}\vee _{i\not =0}(i+1)L(1,i)
\]
where $\tilde X_{i,q}=X_{S(A,A,S(A)^i\otimes det^q)}$
for  $0\le i\le p-1$, $0\le q\le p-2$, and $L(1,p-1)=L(1,0)$. 
However its decomposition of cohomology $H^*(A)$
is complicated.

For $j=(p-1)+i$ with $0\le i\le p-2$, we consider
\[S(A)^j=k\{y^j,\ y^{j-1}u,...,y^pu^{i-1},
T(A)^i,
y^{i-1}u^p,...,yu^{j-1},u^j\}\]
\[ with \quad  T(A)^i=k\{y^{p-1}u^i,\ y^{p-2}u^{i+1},\ 
...,\ y^iu^{p-1}\}.\] 
(Note $S(A)^{p-1}=T(A)^0$.)
Let $d_2=y^pu-yu^p\in H^*(A)^{SL_2(k)}$ so that
\[y^{j-1}u   =y^{i-1}u^p,\ ..., \ y^{p}u^{i-1}=yu^{j-1}\quad mod(d_2).\]
Then $S(A)^j\cong \tilde S(A)^i\oplus T(A)^i\  mod (d_2),$
 where
\[\tilde S(A)^j=k \{y^j\}\oplus k\{y^{j-1}u,...,,y^pu^{i-1}\}\oplus k\{u^j\}.\]

We can see $\tilde S(A)^j\ mod(d_2)$ is a $GL_2(k)$-module,
and hence $T(A)^i\cong S(A)^j/(\tilde S(A)^j, (d_2))$
is also $GL_2(k)$-module  (see [Hi-Ya1]).
Moreover, we have
\begin{lemma} (Lemma 4.3 in [Hi-Ya1])
There is an $Out(E)$-module isomorphism
$T(A)^i\cong S(A)^{p-1-i}\otimes det ^i[2i]$
 where $[2i]$ means the ascending degree $2i$ operation.
\end{lemma}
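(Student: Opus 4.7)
The plan is to proceed via Brauer character theory for $GL_2(\bF_p)$ in characteristic $p$. The target module $L = S(A)^{p-1-i}\otimes \det^i$ is a simple $\bF_p[GL_2(\bF_p)]$-module of dimension $p-i$ --- one of the standard restricted simples $S^a V \otimes \det^b$ with $0\le a\le p-1$, $0\le b\le p-2$ in the classification of $\bF_p[GL_2(\bF_p)]$-irreducibles. One first observes $\dim_k T(A)^i = p-i = \dim_k L$ by inspection of the given basis of $T(A)^i$.

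Next I would compute the Brauer characters of $T(A)^i$ and of $L$ on the $p$-regular semisimple classes of $GL_2(\bF_p)$. On a split semisimple $\mathrm{diag}(s,t)$ acting on $V=k\{y,u\}$ by $y\mapsto sy$ and $u\mapsto tu$, both modules yield the same value
\[ \sum_{l=0}^{p-1-i} s^{l+i}\, t^{p-1-l} \;=\; (st)^i\bigl(s^{p-1-i}+s^{p-2-i}t+\cdots+t^{p-1-i}\bigr), \]
and on a non-split semisimple element with eigenvalues $\sigma,\sigma^p\in\bF_{p^2}^{\times}$ the same eigenvalue-based calculation matches. Since $L$ is simple and Brauer characters determine composition factors with multiplicity, $T(A)^i$ has $L$ as its unique composition factor; combined with the dimension equality this forces $T(A)^i\cong L$ as $\bF_p[GL_2(\bF_p)]$-modules. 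The degree shift $[2i]$ is then dictated by the ambient cohomological grading: $T(A)^i\subset H^{2(p-1+i)}(A;k)$ while $S(A)^{p-1-i}$ naturally sits in lower cohomological degree.

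The main obstacle is that a naive direct construction --- for instance, the map $S(A)^{p-1-i}\otimes\det^i\to T(A)^i$ given by multiplication by $(yu)^i$ --- is \emph{not} $GL_2$-equivariant even modulo the defining relations $\tilde S(A)^j$ and the ideal $(d_2)$. One sees this already on the upper-triangular generator $\bigl(\begin{smallmatrix}1 & 1\\ 0 & 1\end{smallmatrix}\bigr)$: the two routes around the equivariance square differ by a binomial discrepancy which does not land in $\tilde S(A)^j$. So while an explicit equivariant isomorphism exists abstractly, writing one down honestly is delicate, and the Brauer-character route above is the cleanest way to establish the identification claimed in the lemma.
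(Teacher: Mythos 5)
Your strategy --- match dimensions, match Brauer characters on $p$-regular classes, and use that $L=S(A)^{p-1-i}\otimes det^i$ is a restricted simple for $GL_2(\bF_p)$ so that these data force $T(A)^i\cong L$ --- is sound, and it is a genuinely different route from the one indicated here (the paper presents $T(A)^i$ as the quotient $S(A)^{p-1+i}/(\tilde S(A)^{p-1+i},(d_2))$ and quotes the identification from [Hi-Ya1, Lemma 4.3]). Your split-torus computation is correct: the monomials $y^{p-1-l}u^{i+l}$ are eigenvectors for $\mathrm{diag}(s,t)$ with eigenvalues $s^{p-1-l}t^{i+l}$, matching those of $L$ term by term. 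Your closing observation that multiplication by $(yu)^i$ fails to be equivariant even into the quotient is also right: on the unipotent generator the discrepancy contains monomials $y^{a}u^{c}$ with $i\le c\le p-2$, which survive in $T(A)^i$.

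The gap is the non-split semisimple classes. There the sentence ``the same eigenvalue-based calculation matches'' does not stand on its own: $T(A)^i$ is a subquotient of $S(A)^{p-1+i}$ cut out by conditions on the monomial basis in $y,u$, and for an element with eigenvalues $\sigma,\sigma^p\in\bF_{p^2}^{\times}$ that basis is not an eigenbasis, so you cannot read off the eigenvalues of $g$ on $T(A)^i$ as you did for the diagonal torus. You must compute the Brauer character of the subquotient as a difference. One way to fill this: the map $V\otimes S(A)^{i-1}\to S(A)^{p-1+i}$, $v\otimes x\mapsto v^p x$, is $\bF_p$-linear, $GL_2(\bF_p)$-equivariant and injective; its image contains $(d_2)\cap S(A)^{p-1+i}=d_2\cdot S(A)^{i-2}$, and its image modulo $(d_2)$ is exactly the kernel $\overline{\tilde S(A)^{p-1+i}}$ of the projection onto $T(A)^i$ (both have dimension $i+1$). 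Hence $\chi_{T(A)^i}=\chi_{S(A)^{p-1+i}}-\chi_{V}\cdot\chi_{S(A)^{i-1}}$ on $p$-regular classes, and with $\alpha,\beta$ the lifted eigenvalues a geometric-series computation gives
\[ \chi_{T(A)^i}-\chi_{S(A)^{p-1-i}\otimes det^i}
=\frac{(\alpha^i-\beta^i)(\alpha^p+\beta^p-\alpha-\beta)}{\alpha-\beta}, \]
which vanishes for split elements ($\alpha^p=\alpha$, $\beta^p=\beta$) and for non-split ones ($\beta=\alpha^p$ and $\alpha=\beta^p$, the lifts being $(p^2-1)$-st roots of unity). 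With this supplied your argument is complete; the shift $[2i]$ is, as you say, only a record of where $T(A)^i$ sits in the grading of $H^*(A)$.
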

Hereafter  we use notation such that $A\ominus B=C$
means $A=B\oplus C$.
              \begin{thm}  We have an $Out(A)$-module decomposition
              \[H^*(A)\leftrightarrow k[d_2]\otimes( (k[\bar C]\otimes(\oplus_{i=0}^{p-2} S(A)^{i}
\oplus T(A)^i))\ominus k\{\bar C\})\]
  where $Out(A)$ acts trivially on $d_2$ and $\bar C$,
and $|\bar C|=2(p-1)$.             \end{thm}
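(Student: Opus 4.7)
The plan is to decompose $H^*(A) = k[u,y]$ as a graded $Out(A) = GL_2(\bF_p)$-module by setting up two filtrations: one by powers of $d_2$, which produces the outer factor $k[d_2]$ of degree $2(p+1)$; and a secondary periodicity of length $2(p-1)$, which produces the formal factor $k[\bar C]$. The atomic summands are the simple modules $S(A)^i$ (boundary pieces) and $T(A)^i$ (interior pieces) supplied by the decomposition $S(A)^j \cong \tilde S(A)^j \oplus T(A)^i \pmod{d_2}$ recalled before the theorem. To extract the $k[d_2]$ factor, I would use that $d_2 = y^pu - yu^p$ is a nonzero element of the UFD $k[u,y]$, so that multiplication by it is injective and gives a short exact sequence
\[
0 \to H^*(A)[-2(p+1)] \xrightarrow{\,\cdot d_2\,} H^*(A) \to H^*(A)/(d_2) \to 0;
\]
passing to virtual $Out(A)$-module classes reduces the analysis to $H^*(A)/(d_2)$. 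The $\det$-twist by which $d_2$ transforms under $GL_2(\bF_p)$ is harmless at the level of Grothendieck classes because it is absorbed into the Lemma~4.1 identification $T(A)^i \cong S(A)^{p-1-i}\otimes\det^i[2i]$.

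Next, I would analyze $H^*(A)/(d_2)$ polynomial-degree-by-polynomial-degree. For $0 \le j \le p-2$ the piece is the simple module $S(A)^j$, contributing $\bigoplus_{i=0}^{p-2} S(A)^i$ to the fundamental domain $M$. For $j = (p-1)+i$ with $1 \le i \le p-2$ the congruence decomposes $S(A)^j$ as $\tilde S(A)^j \oplus T(A)^i$, in which $\tilde S(A)^j$, after a further reduction modulo $d_2$ and applying Lemma~4.1, is identified with a shifted copy of the boundary module $S(A)^i$ already accounted for in $M$ via the $\bar C$-periodicity. The exceptional case $j = p-1$ (where $i = 0$ and $\tilde S(A)^{p-1}$ degenerates) is handled by the identity $S(A)^{p-1} = T(A)^0$ noted in the excerpt.

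Finally, I would assemble: tensoring the formal factor $k[\bar C]$ (with $|\bar C|=2(p-1)$, and with $Out(A)$ acting trivially since the $\det$-twists have been absorbed) against $M = \bigoplus_{i=0}^{p-2}(S(A)^i \oplus T(A)^i)$ recovers $H^*(A)/(d_2)$ except for a single overlap at topological degree $2(p-1)$: the shift $\bar C \cdot S(A)^0$ lands on top of $T(A)^0 = S(A)^{p-1}$, producing a spurious extra copy of the trivial module $k\{\bar C\}$ which must be subtracted. The main obstacle will be verifying the identification in the second step of $\tilde S(A)^j$ modulo $d_2$ with a shift of $S(A)^i$ as $GL_2(\bF_p)$-modules; the Poincar\'e series cross-check $\frac{1}{(1-q)^2} = \frac{P(M)-q^{p-1}+q^{2(p-1)}}{(1-q^{p+1})(1-q^{p-1})}$ with $q=t^2$ pins down the correct shape, but matching the simple-module types at each degree requires carefully combining the recalled decomposition with Lemma~4.1.
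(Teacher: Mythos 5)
Your route is genuinely different from the paper's. The paper proves this theorem by passing through the extraspecial group $E=p_+^{1+2}$: using the quotient map $q:E\to E/\la c\ra\cong A$ one identifies $grH^*(A)\cong \Img(q^*)\otimes k[d_2]$, and the inner factor is then read off from the already-established decomposition $H^*(E)\cong k[C,v]\otimes(\oplus_i(S(A)^i\oplus T(A)^i))$ of [Hi-Ya1]; the element $\bar C$ is defined by $\bar Cx=(q^*)^{-1}(Cx)$, which is precisely why $\bar C$ itself is absent and the correction $\ominus k\{\bar C\}$ appears. You instead work entirely inside $k[y,u]$: the regularity of $d_2$ produces the $k[d_2]$ factor, and a degree-by-degree analysis of $H^*(A)/(d_2)$ via $S(A)^j\cong\tilde S(A)^j\oplus T(A)^i$ and $\tilde S(A)^j\cong\bar C\cdot S(A)^i$ produces the inner factor. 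Your version is self-contained (it needs neither the ring structure of $H^*(E)$ nor [Hi-Ya1]), and your Poincar\'e-series identity is correct and does pin down all multiplicities; the paper's version gets the $\bar C$-periodicity for free from the multiplicative structure of $H^*(E)$ at the cost of importing that computation.

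Two points in your sketch need repair before it is a proof. First, the $\det$-twist of $d_2$ is not ``absorbed into Lemma 4.1'': since $g^*d_2=\det(g)d_2$, the $q$-th layer $d_2^q\cdot(-)$ twists every constituent by $\det^q$, which is exactly what Corollary 4.3 records by writing $S(A)^i\otimes d_2^q$ for $S(A)^i\otimes\det^q$; this twist is independent of the $\det^i$ occurring in $T(A)^i\cong S(A)^{p-1-i}\otimes\det^i[2i]$ and must simply be carried along. Second, your periodicity step is only set up for the single period $p-1\le j\le 2p-3$; to cover all degrees you must verify that multiplication by $\bar C$ (i.e.\ by $y^{p-1}$ on monomials divisible by $y$ and by $u^{p-1}$ on monomials divisible by $u$, the two rules agreeing modulo $d_2=yu(y^{p-1}-u^{p-1})$) is a well-defined, injective, $GL_2(\bF_p)$-equivariant map $H^{2n}(A)/(d_2)\to H^{2(n+p-1)}(A)/(d_2)$ for $n\ge1$ whose cokernel in degree $2(p-1+i)$ is $T(A)^i$. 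You correctly flag this as the main obstacle, but it is the actual content of the theorem and has to be carried out rather than deferred.
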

{\bf Remark.}  The above theorem is proved in [Hi-Ya1],
by using the map   $ q : E\to E/\la c\ra \to A$
where $E=p^{1+2}_+$ and $\la c\ra$ is its center (see
$\S 6$). Then we can take 
        \[    grH^*(A)\cong Im(q^*)\oplus H^*(A)\{d_2\} 
           \cong Im(q^*)\otimes k[d_2].  \]
The right hand side above is the module in the theorem.
 There is an element $C\in H^{2(p-1)}(E)$ such that
$C\not \in Im(q^*)$ but $Cx\in Im(q^*)$ for 
$x\in H^+(A)$.  We define $\bar Cx=(q^*)^{-1}(Cx)$.
(Hence $\bar C$ itself does not exists in $grH^*(A)$.)
For example, $\bar Cy=Yy, \bar Cu=Uu$ with $U=u^{p-1}$, and 
$\bar C^2=Y^2+U^2-YU$, $\bar C^3=Y^3+U^3-Y^2U$
$mod(d_2)$.

   Let us write $\tilde H_{i,q}\cong (i+1)H^*(\tilde Y_{i,q}) $ is a  summand of $H^*(A)$ which is 
the  sum
         of all sub and quotient modules isomorphic to $S(A)^i\otimes det^q$.
         Let us write $D_2=d_2^{p-1}$. 
     Note that we use $k[\bar C]\ominus k\{\bar C\}\cong k[\bar C^2]\{1,\bar C^3\}$.

\begin{cor}  For $0\le q\le p-2$, we have 
\[ \tilde H_{i,q}\leftrightarrow \begin{cases}
k[\bar C^2,D_2]\{1,\bar C^3\}\quad  if\ i=q=0,\\ 
    k[\bar C^2,D_2]\{1,\bar C^3\}\{d_2^q\}\quad if \ i=0,\ q>0,\\
 k[\bar C,D_2]\otimes(S(A)^i\otimes d_2^q
   \oplus T(A)^{p-1-i}\otimes d_2^{i+q})\quad otherwise.
   \end{cases}\]
  \end{cor}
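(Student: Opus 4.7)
The plan is to extract the $Out(A)=GL_2(\bF_p)$-isotypic components from the decomposition already given by Theorem 4.4, using Lemma 4.1 to rewrite each $T(A)^i$-factor as a $\det$-twist of an $S(A)^j$.

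First I would pin down the $GL_2(\bF_p)$-weight of each ``invariant'' occurring in Theorem 4.4. Since $y$ and $u$ span the defining representation, a direct calculation on $d_2=y^pu-yu^p$ shows (using $\lambda^{p-1}=1$ for $\lambda\in\bF_p^*$) that $d_2$ transforms by the character $\det$. Consequently $D_2=d_2^{p-1}$ is $Out(A)$-trivial, and multiplication by $d_2^q$ shifts the $\det$-weight by $q$. The element $\bar C$ is $Out(A)$-trivial by construction, being pulled back from a $GL_2(\bF_p)$-invariant class on $p_+^{1+2}$. Thus the external factor $k[d_2,\bar C]$ in Theorem 4.4 splits, as an $Out(A)$-module, into $k[\bar C,D_2]$-free modules indexed by $d_2^q$ ($0\le q\le p-2$), and these realize each $\det$-twist exactly once modulo $D_2$.

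Second, I would apply Lemma 4.1 to identify the $T(A)^i$-factors: $T(A)^i\cong S(A)^{p-1-i}\otimes\det^i[2i]$. So in the inner factor $\bigoplus_{i=0}^{p-2}(S(A)^i\oplus T(A)^i)$, the $S(A)^i$-isotypic contributions are produced twice — once by $S(A)^i$ itself (with trivial $\det$-twist) and once by $T(A)^{p-1-i}$ (with twist $\det^{p-1-i}$, which is nontrivial whenever $i\neq 0$). Pairing each with the appropriate power of $d_2$ so that the total twist matches $\det^q$, the $S(A)^i\otimes\det^q$-isotypic piece collects exactly
\begin{equation*}
k[\bar C,D_2]\otimes\bigl(S(A)^i\otimes d_2^q\ \oplus\ T(A)^{p-1-i}\otimes d_2^{i+q}\bigr),
\end{equation*}
since $\det^{p-1-i}\cdot\det^{i+q}=\det^{p-1+q}=\det^q$. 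This gives the ``otherwise'' line of the corollary (when $1\le i\le p-1$), and one checks it covers every such $i$ because for $i=p-1$ the $S(A)^{p-1}$ contribution comes solely from $T(A)^0$.

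Third, I would treat the $i=0$ cases separately, where only $S(A)^0=k$ contributes (there is no $T(A)^{p-1}$-summand in the inner factor of Theorem 4.4). Here the $\ominus k\{\bar C\}$ correction from Theorem 4.4 is felt: since $\bar C\in Out(A)$-trivial isotypic and lives in the same inner summand as $S(A)^0$, one removes $k[d_2]\{\bar C\}$ from this isotypic component. Using the identity $k[\bar C]\ominus k\{\bar C\}\cong k[\bar C^2]\{1,\bar C^3\}$ recalled just before the corollary, this produces $k[\bar C^2,D_2]\{1,\bar C^3\}$ for $(i,q)=(0,0)$ and, after multiplying by $d_2^q$, the module $k[\bar C^2,D_2]\{1,\bar C^3\}\{d_2^q\}$ for $(i,q)=(0,q)$ with $q>0$.

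The main obstacle is the combinatorial bookkeeping of $\det$-weights: one must verify that the $S$- and $T$-contributions to each fixed isotypic component are disjoint inside the full decomposition and together exhaust it, and that the $\ominus k\{\bar C\}$ correction affects only the $Out(A)$-trivial isotypic, i.e.\ only the $i=0$ rows. A mild subtlety is that $d_2^{i+q}$ may have exponent $\ge p-1$, in which case part of it is absorbed into $k[D_2]$; one should check that this absorption is already accounted for by the free $k[\bar C,D_2]$-module structure so that no redundancy arises.
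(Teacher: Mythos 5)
Your proposal is correct and follows essentially the paper's own (largely implicit) route: the corollary is read off from the decomposition of Theorem 4.2 by sorting into $GL_2(\bF_p)$-isotypic components, using Lemma 4.1 to convert each $T(A)^i$ into a $\det$-twist of an $S(A)^j$, the fact that $d_2$ spans a copy of $\det$ (so $d_2^q$ shifts the twist by $q$ while $D_2$ and $\bar C$ are invariant), and the identity $k[\bar C]\ominus k\{\bar C\}\cong k[\bar C^2]\{1,\bar C^3\}$ for the $i=0$ rows. The subtleties you flag at the end (the collapse at $i=p-1$, where only $T(A)^0$ contributes, and the need to read the exponent $i+q$ modulo $p-1$) are imprecisions in the paper's displayed formula rather than gaps in your argument.
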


Since $H^*(L(1,i))\subset H^*(\tilde Y_{i,0})$, (in fact $H^*(L(1,i)$
does not contain $d_2^q$), we have
\[\tilde Y_{i,q}\cong \begin{cases} \tilde X_{i,q}\qquad    if\  q\not =0\ or\  (i,q)=(0,0)\\
                                              \tilde X_{i,0}\vee L(1,i)\qquad if\  q=0,\ i\not =0.
\end{cases}\]
Let us write $\tilde \bC\bB=k[\bar C,D_2]\cong
k[Y,D_2]$.  Then we get   
  \begin{thm}  ([Hi-Ya1])
We have $H^*(\tilde X_{0,q})\cong \tilde H_{0,q}$.
 For $i\ge 1$, we have 
\[(i+1)H^*(\tilde X_{i,q})\cong 
\begin{cases} \tilde {\bC}\bB\otimes (S(A)^i\{D_2\}\oplus 
   T(A)^{p-1-i}\otimes d_2^{i}),\quad q =0\\
    \tilde {\bC}\bB\otimes (S(A)^i\{d_2^q\}\oplus 
   T(A)^{p-1-i}\otimes d_2^{i+q})\quad q\not =0.
\end{cases}\]
   In particular, the space $\tilde X_{p-1,q}$ 
(which is written as $L(2,q)$ e.g., in [Mi-Pr]),   \[pH^*(L(2,q))\cong \begin{cases}
       \tilde {\bC}\bB\otimes (S(A)^{p-1}\{D_2\}),\quad q=0\\
        \tilde {\bC}\bB\otimes (S(A)^{p-1}\{d_2^q\}) \quad q\not =0.
\end{cases}\]
 \end{thm}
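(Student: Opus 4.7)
The plan is to pass from the $Out(A)$-level data in Corollary 4.3 to the finer $A(A,A)$-level by identifying each $Out(A)$-summand $\tilde Y_{i,q}$ with its irreducible $A(A,A)$-pieces $\tilde X_{i,q}$ and, where needed, $L(1,i)$, then solving for $H^*(\tilde X_{i,q})$ by a subtraction in the graded Grothendieck group. The multiplicity $i+1=\dim(S(A)^i\otimes\det^q)$ on the $\tilde H_{i,q}$ side is forced by Lemma 2.2.

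When $q\ne 0$ or $(i,q)=(0,0)$, the relation $\tilde Y_{i,q}=\tilde X_{i,q}$ displayed just before the theorem, together with $\tilde{\bC}\bB=k[\bar C,D_2]$, reduces the formula to Corollary 4.3 divided by $i+1$; the case $i=0$ gives exactly $H^*(\tilde X_{0,q})\cong \tilde H_{0,q}$. When $q=0$ and $i\ge 1$, $\tilde Y_{i,0}\cong \tilde X_{i,0}\vee L(1,i)$, so
\[(i+1)H^*(\tilde X_{i,0})\;=\;\tilde H_{i,0}\ominus (i+1)H^*(L(1,i)),\]
i.e.\ one has to remove $(i+1)k[Y]\{y^i\}$ from $\tilde H_{i,0}$. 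The key step is the graded-module identification
\[k[\bar C]\otimes S(A)^i\;\cong\;(i+1)\,k[Y]\{y^i\},\]
which follows from the Remark after Theorem 4.4: the $\bar C$-action, pulled back through $q^*:H^*(E)\to H^*(A)$, satisfies $\bar Cy=Yy$, $\bar Cu=Uu$, and by multiplicativity each of the $i+1$ monomials $y^{i-j}u^j$ generates under iterated $\bar C$ a graded copy of $k[Y]\{y^i\}$. Using the splitting $k[\bar C,D_2]=k[\bar C]\oplus k[\bar C,D_2]\cdot D_2$ isolates this $L(1,i)$-piece, and the leftover
\[k[\bar C,D_2]\cdot D_2\otimes S(A)^i\;\oplus\;k[\bar C,D_2]\otimes T(A)^{p-1-i}\otimes d_2^i\]
divided by $i+1$ is the claimed formula.

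For the dominant summand $L(2,q)=\tilde X_{p-1,q}$ one specializes $i=p-1$. The coincidences $T(A)^0=S(A)^{p-1}$ and $d_2^{p-1}=D_2$ make the two summands in the general formula differ only by a $D_2$-shift of the same $Out(A)$-simple module; this shift is absorbed into the $k[D_2]$-factor of $\tilde{\bC}\bB$, and since $L(1,p-1)=L(1,0)$ does not enter the splitting of $BA$, no further $L(1,\cdot)$-subtraction is needed, yielding the simplified description of $pH^*(L(2,q))$.

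The main obstacle is the identification $k[\bar C]\otimes S(A)^i\cong (i+1)k[Y]\{y^i\}$ that underpins the $L(1,i)$-subtraction: because $\bar C$ does not live in $gr H^*(A)$ but only as a module operator coming from the pullback along $q:E\to A$, the equality has to be read at the level of graded submodules of $H^*(A)$, and requires the full table of $\bar C^k$-multiplications on the monomial basis of $S(A)^i$ that is only sketched in the Remark.
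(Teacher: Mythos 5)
Your treatment of the main case ($i\ge 1$, $q=0$) is essentially the paper's own argument: write $(i+1)H^*(\tilde X_{i,0})\cong \tilde H_{i,0}\ominus (i+1)H^*(L(1,i))$, identify the $(i+1)$ copies of $k[Y]\{y^i\}$ with $k[Y]\otimes S(A)^i$ via $(i+1)k\{y^i\}\cong S(A)^i$, and finish with $k[Y,D_2]\ominus k[Y]\cong k[Y,D_2]\{D_2\}\cong \tilde \bC\bB\{D_2\}$. The reduction of the remaining cases to Corollary 4.3 through the identification of $\tilde Y_{i,q}$ with $\tilde X_{i,q}$ is also exactly what the paper does, so up to this point your proof matches.

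Your last paragraph on $L(2,q)=\tilde X_{p-1,q}$, however, has a genuine gap on two counts. First, the two summands at $i=p-1$ cannot be ``absorbed into the $k[D_2]$-factor'': $\tilde\bC\bB\otimes S(A)^{p-1}\{d_2^q\}\oplus \tilde\bC\bB\otimes S(A)^{p-1}\{D_2d_2^q\}$ has twice the graded dimension of $\tilde\bC\bB\otimes S(A)^{p-1}\{d_2^q\}$, since $k[D_2]\oplus k[D_2]\{D_2\}\cong k[D_2]\{1,D_2\}\not\cong k[D_2]$. The real reason the answer has a single summand is that $\tilde H_{p-1,q}$ itself has only one: in Theorem 4.2 the direct sum runs over $0\le i\le p-2$, so modules isomorphic to $S(A)^{p-1}\otimes det^q$ arise only from $T(A)^0\otimes d_2^q$, and the ``otherwise'' line of Corollary 4.3 is not to be applied at $i=p-1$ (compare Corollary 6.2, where $i=p-1$ is listed separately). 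Second, the claim that no $L(1,\cdot)$-subtraction is needed at $i=p-1$ is false: the splitting displayed before the theorem gives $\tilde Y_{p-1,0}\cong \tilde X_{p-1,0}\vee L(1,p-1)$ with $L(1,p-1)=L(1,0)$, and it is precisely the subtraction of $pH^*(L(1,0))\cong k[Y]\otimes S(A)^{p-1}$ from $\tilde H_{p-1,0}\cong\tilde\bC\bB\otimes S(A)^{p-1}$ that produces the factor $\{D_2\}$ in the $q=0$ formula; your argument would instead yield $\tilde\bC\bB\otimes S(A)^{p-1}$ without the shift, which is not the stated result.
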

\begin{proof}
We only need to prove the case $i\not =0$ and $q=0$.
First note
\[ (i+1)H^*(\tilde X_{i,0})\cong \tilde H_{i,q}\ominus (i+1)H^*(L(1,2).\]
Using Corollary 4.3, we can compute
\[ (i+1)H^*(\tilde X_{i,0})\ominus (\tilde \bC\bB \otimes T(A)^{p-1-i}\otimes d_2^i)\]
\[ \cong \tilde \bC\bB\otimes (S(A)^i)\ominus (i+1)k[Y]\{y^i\}\cong (\tilde \bC\bB\ominus k[Y])\otimes S(A)^i.\]
Here we identify $(i+1)k\{y^i\}\cong k\{y^i,y^{i-1}u,...,u^i\}\cong S(A)^i$.  
The result follows from  
\[\tilde \bC\bB\ominus k[Y]\cong k[Y,D_2]\ominus k[Y] \cong k[Y,D_2]\{D_2\}\cong \tilde \bC\bB\{D_2\}.\]
\end{proof}

 We recall the Dickson algebra $\bD\bA$, namely,
\[ \bD\bA=k[y,u]^{GL_2(k)}=k[D_1,D_2]\]
where $D_1=Y^p+V$ and $V=D_2/Y $
 (see $\S 5,6$ below).
 Note that
 $\bar C^p=V+Y^p=D_1\ mod(d_2)$.  Hence
we can identify (as a free $\bD\bA$-module)
\[ \tilde \bC\bB=\bD\bA\{1,\bar C,...,\bar C^{p-1}\}.\] 
Note $H^*(\tilde  X_{0,0})\not \cong \bD\bA$.
In fact, we have
\begin{lemma}
\[H^*(\tilde X_{0,0})\cong k[\bar C^2,D_2]\{1,\bar C^3\} \cong
 \bD\bA\{1,D_1\bar C,\bar C^2,...,\bar C^{p-1} \}.\]
\end{lemma}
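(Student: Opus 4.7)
The plan is to combine two previously-recorded facts: the case $i=q=0$ of Corollary 4.3 (which gives the first isomorphism in the lemma directly) and the free $\bD\bA$-module description $\tilde \bC\bB = \bD\bA\{1,\bar C,\bar C^2,\ldots,\bar C^{p-1}\}$ stated in the paragraph immediately preceding the lemma. Thus only the second isomorphism requires work, and it is a purely algebraic rewriting at the level of graded $\bD\bA$-modules.

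First I would compare monomials inside $\tilde \bC\bB = k[\bar C, D_2]$. The submodule $k[\bar C^2, D_2]\{1,\bar C^3\}$ consists exactly of those $\bar C^n D_2^j$ with $n \in \bZ_{\ge 0}\setminus\{1\}$: even powers of $\bar C$ come from the $k[\bar C^2, D_2]\cdot 1$-summand, odd powers $\ge 3$ come from the $k[\bar C^2, D_2]\cdot \bar C^3$-summand, and the only $\bar C$-exponent not reached is $n=1$. Hence
\[H^*(\tilde X_{0,0}) \cong k[\bar C^2, D_2]\{1,\bar C^3\} = \tilde \bC\bB \ominus k[D_2]\{\bar C\}\]
as graded modules.

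Next I would refine the single $\bD\bA$-summand $\bD\bA\cdot \bar C \subset \tilde \bC\bB$ that contains the removed piece. Using the obvious $k[D_2]$-module splitting $\bD\bA = k[D_1,D_2] = k[D_2] \oplus D_1 \bD\bA$, one has
\[\bD\bA\cdot \bar C = k[D_2]\bar C \oplus \bD\bA\cdot (D_1 \bar C).\]
Subtracting the $k[D_2]\bar C$ factor from this summand of $\tilde \bC\bB = \bigoplus_{c=0}^{p-1} \bD\bA\cdot \bar C^c$ yields
\[H^*(\tilde X_{0,0}) = \bD\bA\{1\} \oplus \bD\bA\{D_1\bar C\} \oplus \bigoplus_{c=2}^{p-1} \bD\bA\{\bar C^c\} = \bD\bA\{1,D_1\bar C,\bar C^2,\ldots,\bar C^{p-1}\},\]
which is the claimed second isomorphism.

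The only point requiring any care is identifying the removed monomials $\{\bar C\cdot D_2^j\}_{j\ge 0}$ with the $k[D_2]$-direct summand of $\bD\bA\cdot \bar C$ inside the free $\bD\bA$-resolution of $\tilde\bC\bB$, rather than with some generic $k$-subspace; once this bookkeeping is in place, the rest reduces to the visible $k[D_2]\oplus D_1\bD\bA$ splitting of the Dickson algebra, and there is no substantive obstacle.
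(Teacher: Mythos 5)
Your proposal is correct and follows essentially the same route as the paper: both identify $k[\bar C^2,D_2]\{1,\bar C^3\}$ with $\tilde\bC\bB\ominus k[D_2]\{\bar C\}$, expand $\tilde\bC\bB$ as the free $\bD\bA$-module on $1,\bar C,\dots,\bar C^{p-1}$, and peel $k[D_2]\{\bar C\}$ off the summand $\bD\bA\{\bar C\}$ via the splitting $\bD\bA=k[D_2]\oplus\bD\bA\{D_1\}$ to leave $\bD\bA\{D_1\bar C\}$. No substantive difference.
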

\begin{proof}  We see the last isomorphism
\[ \bC\bB\ominus k\{\bar C\}\otimes k[D_2]
\cong \bD\bA\{1,\bar C,..., \bar C^{p-1}\}
\ominus k\{\bar C\}\otimes k[D_2] \]
\[ \cong \bD\bA\{1,\bar C^2,...,C^{p-1}\}\oplus A\quad 
with \ A=\bD\bA\{\bar C\}\ominus k[D_2]\{\bar C\}.\]
Here $A\cong (\bD\bA\ominus k[D_2])
\{\bar C\}
\cong \bD\bA
\{D_1\bar C\}.$
Thus we have the result.
\end{proof}

     {\bf Examples.} 
 For $p=3$, (see Corollary 5.2 in [Ya2]) we have   
  \[H^*(\tilde X_{0,0})\cong \tilde H_{0,0} \cong H^*((A:SD_{16}))\cong H^*(A)^{SD_{16}}\]
\[ \cong \bD\bA\{1,\tilde CD_1,\tilde C^2\}\cong \bD\bA\{1,\tilde C^2,\tilde C^4\}.\]

\section{metacyclic groups for $p\ge 3$ }

For $p\ge 5$,  groups $P$ with $rank_pP=2$ are classified by Blackburn (see Thomas [Th], Dietz-Priddy [Di-Pr] ).
They are metacyclic groups, groups $C(r)$ and $G(r',e)$ (see sections 5,7 below for the definitions).
In this section, we consider metacyclic $p$ groups $P$ for $p\ge 3$
\[    0\to \bZ/p^m \to P\to \bZ/p^{n}\to 0.\]
These groups are represented as
\[ P=\la a,b|a^{p^m}=1, a^{p^{m'}}=b^{p^n},[a,b]=a^{rp^{\ell}}\ra \quad r\not=0\ mod(p)\quad (5.1). \]
It is known by Thomas [Th], Huebuschmann [Hu]  that $H^{even}(P;\bZ)$ is multiplicatively generated
by Chern classes of complex representations.  Let us write
\[\begin{cases} y=c_1(\rho),\quad  \rho: P\to P/\la a\ra \to \bC^*\\
 v=c_{p^{m-\ell}}(\eta),\quad \eta= Ind_H^P(\xi),\ \ 
\xi:H=\la a,b^{p^{m-\ell}}\ra \to \la a\ra \to \bC^*
\end{cases}\]
where $\rho,\xi$ are nonzero linear representations.
Then $H^{even}(P;\bZ)$ is generated by
\[ y,\ c_1(\eta),c_2(\eta),...,c_{p^{m-\ell}}(\eta)=v.\]
(Lemma 3.5 and the explanation just before this lemma in [Ya1].)
We can see (the last equation in the proof of Theorem 5.45 in [Ya1])
\[c_1(\eta)=0,...,c_{p^{m-\ell}-1}(\eta)=0 \quad  in\ H^*(P)=H^*(P;\bZ)/(p,\sqrt{0}).\]
By using Quillen's theorem and the fact  that $P$ has just one conjugacy class of maximal abelian
$p$-subgroups,  we can prove
\begin{thm} (Theorem 5.45 in [Ya1])
For any metacyclic $p$-group $P$ in (5.1) with $p\ge 3$, we have a ring  isomorphism
\[ H^*(P)\cong k[y,v],\quad |v|=2p^{m-\ell}\quad (5.2).\]
\end{thm}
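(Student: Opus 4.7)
The plan is to establish the ring isomorphism by producing a graded surjection $\phi\colon k[y,v]\twoheadrightarrow H^*(P)$ and then forcing it to be an isomorphism via a Krull-dimension count. The surjection is essentially already in hand from what is recalled above: by Theorem 5.45 of [Ya1], $H^{even}(P;\bZ)$ is multiplicatively generated by $y,\;c_1(\eta),\ldots,c_{p^{m-\ell}}(\eta)=v$, and by the vanishing statement cited just before the theorem, the intermediate Chern classes $c_1(\eta),\ldots,c_{p^{m-\ell}-1}(\eta)$ all die in $H^*(P)=H^*(P;\bZ)/(p,\sqrt 0)$. Since $p$ is odd, the odd-degree classes contribute only nilpotents and so also disappear. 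Hence $y$ and $v$ alone generate $H^*(P)$ as a $k$-algebra, yielding the graded surjection $\phi$ with $|y|=2$ and $|v|=2p^{m-\ell}$.

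To upgrade $\phi$ to an isomorphism I would invoke Quillen's F-isomorphism recalled in Section 2. Because $rank_p(P)=2$, there exists an elementary abelian $p$-subgroup $A\cong(\bZ/p)^2$ with $H^*(A;k)/\sqrt 0\cong k[x_1,x_2]$, a polynomial ring of Krull dimension $2$. The F-isomorphism then forces $H^*(P;k)/\sqrt 0$, and consequently $H^*(P)$, to have Krull dimension exactly $2$. The additional hypothesis that $P$ has just one conjugacy class of maximal elementary abelian $p$-subgroups makes the inverse limit in Quillen's theorem collapse to a single Weyl-invariant factor $H^*(A;k)^{N_P(A)/C_P(A)}$, which tightens the picture and could also give the ring structure directly; for the bare dimension bound, however, the mere existence of one such $A$ of $p$-rank $2$ already suffices.

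Since $k[y,v]$ is an integral domain of Krull dimension $2$, every proper quotient has dimension at most $1$, so the surjection $\phi$ onto a ring of dimension $2$ must have trivial kernel. Therefore $\phi$ is an isomorphism, which is the required ring isomorphism $H^*(P)\cong k[y,v]$ with $|v|=2p^{m-\ell}$.

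The genuine work of the argument lives entirely in the two inputs imported from [Ya1], namely the Chern-class generation and the vanishing of the intermediate $c_i(\eta)$; these rest on the specific metacyclic structure (5.1) and on the fact that $\eta$ is induced from a linear character on a maximal abelian subgroup. Once those ingredients are accepted, the step carried out here is a purely formal Krull-dimension argument and presents no real obstacle.
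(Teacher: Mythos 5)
Your argument is essentially correct, and it takes a different route from the paper for the final step. The paper does not reprove the theorem: it imports it from [Ya1], and the one-line indication it gives is that Quillen's theorem plus the fact that $P$ has a single conjugacy class of maximal elementary abelian subgroups collapses Quillen's inverse limit to a single Weyl-invariant ring $H^*(A)^{W}$, which one then identifies with $k[y,v]$. You instead combine the same two imported inputs (Chern-class generation of $H^{even}(P;\bZ)$, which is due to Thomas and Huebschmann rather than to Theorem 5.45 of [Ya1] itself, and the vanishing of the intermediate $c_i(\eta)$ in $H^*(P)$) with a Krull-dimension count: a surjection from the two-dimensional domain $k[y,v]$ onto a ring of dimension two must be injective. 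This avoids computing the invariant ring and is a clean, more formal way to finish; what it gives up is the extra structural information (the identification with $H^*(A)^{W}$ and the explicit restriction of $v$ to $A$) that the paper uses repeatedly in later sections.

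One step deserves a word of justification: you write that Quillen's F-isomorphism forces $H^*(P;k)/\sqrt{0}$, ``and consequently $H^*(P)$,'' to have Krull dimension exactly $2$. Since $H^*(P)=H^*(P;\bZ)/(p,\sqrt{0})$ sits inside $H^*(P;k)/\sqrt{0}$ only as a subring, and a subring of a two-dimensional ring need not itself be two-dimensional, the lower bound $\dim H^*(P)\ge 2$ is not automatic from the inclusion. It does hold, but for a different reason: by Venkov--Evens, $H^*(P;k)$ is a finite module over the Chern subring (the image of $R(P)$ under total Chern classes), which consists of integral classes, so $H^*(P;k)$ is finite over the image of $H^{even}(P;\bZ)/p$ and the two rings have the same Krull dimension. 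Equivalently, one can argue directly that the image of $k[y,v]$ in $H^*(A;k)\cong k[y,u]$ has dimension $2$ because $u$ satisfies the monic equation $u^p-y^{p-1}u-v|_A=0$, making $k[y,u]$ finite over $k[y|_A,v|_A]$. With that line added, your dimension count closes and the proof is complete.
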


We now consider the stable splitting.

(I) Non split cases.
For a  non split metacyclic groups, it is proved  that $BP$ itself  is irreducible [Di].

(II) Split cases with $(\ell,m,n)\not =(1,2,1)$.
We  consider  a split metacyclic group.  it is written as 
\[P=M(\ell,m,n)=\la a,b|a^{p^m}=b^{p^n}=1, [a,b]=a^{p^{\ell}}\ra\]
for $m>\ell\ge max(m-n,1)$.  

The outer automorphism is the semidirect product
\[Out(P)\cong (p-group):\bZ/(p-1).\]
The $p$-group acts trivially on $H^*(P)$,  and $j \in \bZ/(p-1)$
acts on $a\mapsto a^{j}$ and so acts on  $H^*(P)$ as $j^* : v\mapsto j v$.
There are $p-1$ simple $\bZ/(p-1)$-modules $S_i\cong k\{v^i\}$. We consider the decomposition
by idempotens for $Out(P)$.  Let us write $Y_i=e_{S_i}BP$ and
\[ H^*(Y(S_i))\cong (dim(S_i))H^*(Y_i)\subset H^*(P)\]
(in the notation $Y_i$ from Lemma 3.1). Hence we have the decomposition for $Out(P)$-idempotents
\[ H^*(Y_i)\cong k[y,V]\{v^i\},\quad V=v^{p-1}.\]

We assume $P\not =M(1,2,1)$.
By Dietz, we have splitting
\[(*)\quad  BP\cong \vee_{i=0}^{p=2}X_i\vee \vee_{i=0}^{p-2}\bar L(1,i).\]
Here we  write $X_i=e_{S(P,P,S_i)}BP$  identifying  $S_i$ as the $A(P,P)$ simple module
(but not the simple $Out(P)$-module). 

The summand  $\bar L(1,i)$ is defined as follows.
(When $n=1$, $\bar L(1,i)=L(1,i)$ defined in $\S 4$.)
Recall that $H^*(\la b\ra)\cong k[y]$.  The outer automorphism group is  
$Out(\la b\ra)\cong (\bZ/p^n)^*$ and 
its simple $k$ modules are $S_i'=k\{y^i\}$ for $0\le i\le p-2$.  Hence we can decompose
\[ B\la b\ra \cong \vee _{i=0}^{p-2}\bar L(1,i),\quad  H^*(\bar L(1,i))\cong k[Y]\{y^i\}\quad with\ Y=y^{p-1}.\]

Next we consider $\bar L(1.i)$ as a split summand in $BP$ as follows.
(Consider the $A(P,P)$-simple module $S(P,\la b\ra, S_i')$.)
Let $\Phi\in A(P,P)$ be the element  defined by the map
$ \Phi: P\ge P\to \la b\ra \subset P$
which induced the isomorphism
\[H^*(P)\Phi\cong H^*(\vee_{i=0}^{p-2} \bar L(1,i))\cong k[y]
\subset H^*(Y_0).\]
Thus we can show (since $k[y]$ is invariant under elements in $Out(P)$)
\[(**)\quad Y_i\cong \begin{cases} X_i\quad i\not =0\\
                                  X_0\vee \vee_{j=0}^{p-2}\bar L(1,j)\quad i=0.
\end{cases}\]

{\bf Remark.}   For groups $P,P'$ with the same $m-\ell$, we have the isomorphism
$H^*(P)\cong H^*(P')$ and the Burnside algebras act on the cohomology by the same way.
For the splittings $X(P)$ and $X(P')$ (for $BP$ and $BP'$ respectively), we have
$H^*(X_i(P))\cong H^*(X_i(P'))$.  But when $P\not \cong P'$, it is known from
Nishida [Ni] that $X_i(P)\not \cong X_i(P')$, i.e. they are not stably homotopy equivalent.
Similarly $\bar L(1,i)$ are  different stable homotopy types when $n$ are different.

\begin{thm}  Let $P$ be a split metacyclic group with $(\ell,m,n)\not =(1,2,1)$.
Then we have 
\[ H^*(X_i)\cong \begin{cases} k[y,V]\{v^i\} \quad i\not = 0\\
                         k[y,V]\{V\}\quad i=0.
       \end{cases}\]
\end{thm}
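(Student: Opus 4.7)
The plan is to exploit the two-step splitting already set up: the coarser decomposition $BP \cong Y_0 \vee \cdots \vee Y_{p-2}$ coming from the $Out(P)$-idempotents and the finer refinement (**) that splits off the $\bar L(1,j)$ summands from $Y_0$. Since the cohomology of each $Y_i$ has already been identified as $k[y,V]\{v^i\}$ via the $Out(P)$-action on $H^*(P)=k[y,v]$, and since (**) tells us that $X_i = Y_i$ for $i \neq 0$, the only real work is the $i=0$ case.

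For $i \neq 0$, I would simply read off from (**) that $X_i \cong Y_i$, so $H^*(X_i)\cong k[y,V]\{v^i\}$, giving the first branch of the formula immediately.

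For $i=0$, I would start from $H^*(Y_0)\cong k[y,V]$ and subtract the contribution of the summand $\vee_{j=0}^{p-2}\bar L(1,j)$. Here I use the identification $H^*(\bar L(1,j))\cong k[Y]\{y^j\}$ with $Y=y^{p-1}$. The key observation is that as $j$ ranges over $0,\ldots,p-2$, the submodules $k[Y]\{y^j\}$ exhaust all monomials in $y$, so
\[
\bigoplus_{j=0}^{p-2}H^*(\bar L(1,j)) \;\cong\; \bigoplus_{j=0}^{p-2}k[Y]\{y^j\} \;\cong\; k[y].
\]
Combining this with the additive decomposition $k[y,V]\cong k[y]\oplus V\cdot k[y,V]$ yields
\[
H^*(X_0)\;\cong\;H^*(Y_0)\ominus k[y]\;\cong\;k[y,V]\ominus k[y]\;\cong\;k[y,V]\{V\},
\]
as claimed.

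The only thing to verify carefully is that the splitting off of $\bigvee_j \bar L(1,j)$ from $Y_0$ in (**) is compatible, at the level of cohomology, with the direct-sum decomposition $k[y,V] = k[y]\oplus V\cdot k[y,V]$ — i.e., that the image of $\bigoplus_j H^*(\bar L(1,j))$ inside $H^*(Y_0)$ is exactly the subring $k[y]$ and not some other copy. This follows from the defining map $\Phi: P \ge P \to \langle b\rangle \subset P$ used to realize $\bar L(1,j)$ inside $BP$, under which $H^*(\langle b\rangle) = k[y]$ sits in $H^*(P)$ as the polynomial subring on $y$; since $k[y]\subset H^*(Y_0)$ (the $y$-monomials are $Out(P)$-fixed), no cross-identification with $V$-multiples can occur. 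This compatibility is the main (minor) obstacle; once it is in place the cohomology computation is automatic.
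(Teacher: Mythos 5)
Your proposal is correct and follows essentially the same route as the paper: the paper likewise reads off $X_i\cong Y_i$ for $i\neq 0$ from the splitting $(**)$ and computes $H^*(X_0)\cong H^*(Y_0)\ominus H^*(\vee_j \bar L(1,j))\cong k[y,V]\ominus k[y]\cong k[y,V]\{V\}$. The compatibility point you flag at the end is exactly what the paper establishes just before the theorem, where the element $\Phi: P\ge P\to \la b\ra\subset P$ is shown to induce $H^*(P)\Phi\cong k[y]\subset H^*(Y_0)$.
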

\begin{proof} For $i\not =0$, we have
         $H^*(Y_i)\cong H^*(X_i)$.
For $i=0$,  we see 
\[H^*(X_0)\cong  H^*(Y_0)\ominus  H^*(\vee _{j=0}^{p-2}L(1,j)) \]
\[ \cong k[y,V]\ominus k[y] \cong k[y,V]\{V\}.\]
\end{proof}

(III) Split metacyclic group with $(\ell,m,n)=(1,2,1).$

 This case $P=p_{-}^{1+2}$ and its cohomology is the same as (II).  But the splitting is given ([Di], [Di-Pr])
\[ BP\cong \vee_{i=0}^{p=2}X_i\vee \vee_{i=0}^{p-2}L(2,i)\vee \vee_{i=0}^{p-2}L(1,i).\]
Let $H=\la b,a^p\ra$ the maximal elementary abelian subgroup.
The outer automorphism $Out(H)\cong GL_2(k)$ and
simple $GL_2(k)$-modules are written as $S(H)^i\otimes det^j$.
The summand $L(2,i)$ is defined as 
\[  L(2,i)=X_{S(P,H,S(H)^{p-1}\otimes det^i)}.\]

Here note $v|_{H}=u^p-y^{p-1}u$ so that $yv|_{H}=d_2$.
This fact is proved by using the fact that
$v|_{H}$ invariant under the action $a^*:u\mapsto u+y,
y\mapsto y$.

Of course there is no map $P\to H$. 
The space $L(2,i)$ is the transfer ($Tr:BH\to BG$)
 image of the same named summand of $BH$.  

  By using the double coset formula
\[Tr_H^P(u^{p-1})|_H=\sum_{i=0}^{p-1} (u+iy)^{p-1}=-y^{p-1}\]
taking the generator $u$ in 
$H^*(\la b,a^p\ra)\cong k[y,u]$.
The group $P$ has just one conjugacy class $H$ of the maximal abelian $p$-groups.
Hence by Quillen's theorem, we have
\[ Tr_H^P(\bar C^id_2^ju^{p-1})=-Y^i(yv)^jY\quad in\ H^*(P)=H^*(P;\bZ)/(p,\sqrt{0}).\]

Next, we consider an element $\Phi\in A(P,P)$ defined by
\[\Phi:  P\ge H\stackrel{a^p\leftrightarrow b}{\cong} H \subset P.\]
Then $\Phi(C^id_2^j)=-C^{i}d_2^j$. (So $\Phi^2(C^id_2^j)=C^id_2^j)$.)
Here recall Theorem 4.4
\[ H^*(L(2,i))\cong \begin{cases}
                                         \tilde \bC\bB\{\bar Cd_2^i\} \quad i\not =0\\
\bar  \bC\bB\{\bar CD_2\} \quad i=0.
\end{cases}\]

Since 
$C^id_2^j=C^iy^jv^j\in H^*(Y_j)$, we see $\Phi(H^*(L(2,j))\subset H^*(Y_j)$.
Thus we have the isomorphism
\[Y_i\cong \begin{cases} X_i\vee L(2,i)\qquad i\not =0\\
          X_0\vee L(2,0)\vee \vee_{j=0}^{p-2}L(1,j)\qquad i=0.
\end{cases} \]

To compute cohomology of irreducible components $X_i$ and $L(2,j)$, we recall 
the Dickson algebra
\[\bD\bA=k[y,u]^{GL_2(\bZ/p)}\cong k[D_1,D_2]\quad with\ D_1=Y^p+V,\ D_2=YV.\]
We also write (see $\S 6 $ bellow) the free $\bD\bA$-modules
\[\bC\bA=k[Y,V]\cong \bD\bA\{1,Y,...,Y^p\},\]
\[\bC\bB=k[Y,D_2]\cong \bD\bA\{1,Y,...,Y^{p-1}\}.\]
Hence $\bC\bA\cong \bD\bA\oplus \bC\bB\{Y\}$.

\begin{thm}
Let $P=M(1,2,1)\cong p_-^{1+2}$.  Then we have
\[H^*(X_i)\cong \begin{cases} \bC\bA\{1,...,\hat y^{i},...,y^{p-2}\}
\{v^i\}\oplus \bD\bA
\{d_2^i\}\quad i>0\\
\bC\bA\{y,,...,y^{p-2}\}\{V\}\oplus \bD\bA\quad i=0.
\end{cases}\]
\end{thm}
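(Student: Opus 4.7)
The plan is to obtain the decomposition by subtraction: start from the $\Out(P)$-decomposition $H^*(Y_i)\cong k[y,V]\{v^i\}$ and the refined splittings $Y_i\cong X_i\vee L(2,i)$ for $i\neq 0$ and $Y_0\cong X_0\vee L(2,0)\vee \bigvee_{j=0}^{p-2} L(1,j)$ (both established just before the theorem), and then remove the contributions of the known summands $L(2,i)$ and $L(1,j)$ from $H^*(Y_i)$. The main algebraic input is the identity $\bC\bA=\bD\bA\oplus\bC\bB\{Y\}$ together with the restriction-level equality $y^iv^i=(-1)^i d_2^i$ inside $H^*(P)\subset H^*(H)$, which follows from $v|_H=u^p-y^{p-1}u$ and $d_2=y^pu-yu^p$.

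For $i>0$, I would rewrite $H^*(Y_i)\cong\bC\bA\{1,y,\dots,y^{p-2}\}\{v^i\}$ using $k[y]=k[Y]\{1,y,\dots,y^{p-2}\}$, and identify $H^*(L(2,i))$ as a submodule of $H^*(P)$. Combining the restriction identity with Theorem 4.4 (and the equality $\tilde\bC\bB=\bC\bB=k[Y,D_2]$) locates $H^*(L(2,i))$ precisely as the $\bC\bB\{Y\}$-part of the diagonal column:
\[
\bC\bA\{y^iv^i\}=\bD\bA\{d_2^i\}\oplus\bC\bB\{Yy^iv^i\},\qquad H^*(L(2,i))\cong \bC\bB\{Yy^iv^i\}.
\]
Subtracting this from $H^*(Y_i)$ leaves the columns $\bC\bA\{y^jv^i\}$ for $j\neq i$ untouched and replaces the $j=i$ column by $\bD\bA\{d_2^i\}$, which yields the desired formula for $i>0$.

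For $i=0$, I would remove summands in two steps. First, $\bigoplus_j H^*(L(1,j))=k[y]$ is exactly the $V$-free part of $H^*(Y_0)=k[y,V]$, leaving $\bC\bA\{V,yV,\dots,y^{p-2}V\}$. Second, $H^*(L(2,0))\cong\bC\bB\{YV\}=\bC\bB\{D_2\}$ sits inside the column $\bC\bA\{V\}$, and the decomposition $\bC\bA\{V\}=\bD\bA\{V\}\oplus\bC\bB\{D_2\}$ produces the remaining $\bD\bA\{V\}$ term (the $\bD\bA$ summand of the statement); the other columns $\bC\bA\{y^jV\}$, $j=1,\dots,p-2$, assemble into the $\bC\bA\{y,\dots,y^{p-2}\}\{V\}$ summand.

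The main obstacle is the concrete identification $H^*(L(2,i))\cong\bC\bB\{Yy^iv^i\}$ inside $H^*(P)$. Theorem 4.4 describes this cohomology only as a summand of $BH$, whereas realizing it as the transfer image in $BP$ requires combining the explicit transfer formula $Tr_H^P(\bar C^id_2^ju^{p-1})=-Y^{i+1}(yv)^j$ recorded just above the theorem with the Quillen F-isomorphism $H^*(P)\hookrightarrow H^*(H)^{P/H}$, under which the phantom element $\bar C\in H^*(H)$ is represented by $Y=y^{p-1}$ on positive-degree elements of $H^*(P)$.
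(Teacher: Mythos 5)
Your overall strategy is the paper's: decompose $H^*(Y_i)$ under $Out(P)$, then strip off the non-dominant summands using $\bC\bA\cong\bD\bA\oplus\bC\bB\{Y\}$. The $i>0$ case is correct and essentially identical to the paper's computation, including the identification $H^*(L(2,i))\cong\bC\bB\{Yd_2^i\}$ inside $H^*(P)$ via the transfer formula.

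The $i=0$ case, however, contains a genuine error. You place $H^*(L(2,0))$ at $\bC\bB\{YV\}=\bC\bB\{D_2\}$, but by Theorem 4.4 the bottom class of $L(2,0)$ is $u^{p-1}D_2$ (the $q=0$ case carries the extra factor $D_2$: it is $\tilde\bC\bB\otimes S(A)^{p-1}\{D_2\}$, not $\tilde\bC\bB\otimes S(A)^{p-1}$), and the very transfer formula you quote sends $d_2^{p-1}u^{p-1}$ to $-YD_2$. So the correct location is $\bC\bB\{YD_2\}=\bC\bB\{Y^2V\}$, one factor of $Y$ higher than you claim. With your placement the leftover of the column $\bC\bA\{V\}$ is $\bD\bA\{V\}$, which is \emph{not} the $\bD\bA$ asserted in the theorem: the two differ by $k[D_2]\{D_2\}$, as a Poincar\'e series check confirms. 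With the correct placement one has $B=\bC\bA\{V\}\ominus\bC\bB\{YD_2\}$, and since $\bC\bA\{V\}=\bC\bA\ominus k[Y]$ while $k[Y]\oplus\bC\bB\{YD_2\}\cong\bC\bB\{Y\}$ (this is exactly Lemma 5.4, i.e.\ $H^*(L(1,0)\vee L(2,0))\cong\bC\bB\{Y\}$), one concludes $B\cong\bC\bA\ominus\bC\bB\{Y\}\cong\bD\bA$ as stated. So the ingredient your argument is missing is Lemma 5.4 (or the equivalent monomial count), which you bypass only because $L(2,0)$ has been mislocated.
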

\begin{proof}
Let $i\not =0$.  We see 
\[H^*(Y_i)\cong k[y,V]\{v^i\}\cong \bC\bA\{1,y,...,y^{p-2}\}\{v^i\}.\]
The cohomology of the summand $X_i$ is
\[H^*(X_i)\cong H^*(Y_i)\ominus H^*(L(2,i))\]
\[\cong \bC\bA\{v^i\}\{1,...,y^{p-2}\} \ominus \bC\bB\{Yd_2^i\}\]
\[\cong \bC\bA\{1,...,\hat y^i,...,y^{p-2}\} \{v^i\}
\oplus( \bC\bA\{v^iy^i\}\ominus\bC\bB\{Yd_2^i\}).\]
Here $v^iy^i=d_2^i$ and $\bC\bA\cong \bD\bA\oplus
\bC\bB\{Y\}$, and
we have the isomorphism in the theorem for $i\not =0$.

Next  we consider in the case $i=0$.  From Theorem 5.2, 
we see
\[ H^*(Y_0)\ominus H^*(\vee _jL(1,j)\cong
k[y,v]\{V\}\cong \bC\bA\{1,...,y^{p-2}\}\{V\}.\]
Hence we have 
\[H^*(X_0)\cong H^*(Y_0)\ominus H^*(\vee_jL(1,j))\ominus H^*(L(2,0))\]
\[\cong \bC\bA\{1,y,...,y^{p-2}\}\{V\}\ominus \bC\bB\{YD_2\}
 \cong \bC\bA\{y,...,y^{p-2}\}\{V\}\oplus B\]
where 
\[  B=\bC\bA\{V\}\ominus \bC\bB\{YD_2\}
 \cong \bC\bA\ominus H^*(L(1,0))\ominus H^*(L(2,0)). \]
We can see $B\cong \bD\bA$ by the following lemma.
\end{proof}
\begin{lemma}  Let $M(2)=L(2,0)\vee L(1,0)$ (as the usual notation of the homotopy theory).
Then we have 
\[ H^*(M(2))\cong \bC\bB\{Y\},  \quad
\bC\bA\cong \bD\bA\oplus H^*(M(2)).\]
\end{lemma}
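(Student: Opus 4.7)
The plan is to prove both isomorphisms by bookkeeping using the free $\bD\bA$-module descriptions of $\bC\bA$ and $\bC\bB$ recalled immediately before the lemma.

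For the first assertion $H^*(M(2))\cong \bC\bB\{Y\}$, I would decompose $H^*(M(2))\cong H^*(L(1,0))\oplus H^*(L(2,0))$ via $M(2)=L(2,0)\vee L(1,0)$. The splitting of $B\la b\ra$ in Section 4 gives $H^*(L(1,0))\cong k[Y]$, and Theorem 4.4 (case $q=0$) gives $H^*(L(2,0))\cong \tilde{\bC\bB}\{\bar C D_2\}$. Since $\tilde{\bC\bB}=k[\bar C,D_2]\cong k[Y,D_2]=\bC\bB$ as graded algebras and $|\bar C|=2(p-1)=|Y|$, this rewrites as $H^*(L(2,0))\cong \bC\bB\{YD_2\}$ as graded modules. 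To conclude, filter $\bC\bB\{Y\}=k[Y,D_2]\{Y\}$ by powers of $D_2$: the $D_2^0$-part is $k[Y]\{Y\}\cong k[Y]$ and the $D_2^{\ge 1}$-part is $\bC\bB\{YD_2\}$, accounting for $H^*(L(1,0))$ and $H^*(L(2,0))$ respectively.

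For the second assertion $\bC\bA\cong \bD\bA\oplus H^*(M(2))$, invoking the first reduces the claim to $\bC\bA\cong \bD\bA\oplus \bC\bB\{Y\}$, which follows immediately from the $\bD\bA$-basis $\bC\bA\cong \bD\bA\{1,Y,\ldots,Y^p\}$: splitting off the basis element $1$ gives $\bD\bA\oplus \bD\bA\{Y,\ldots,Y^p\}$, and the latter summand coincides with $\bC\bB\{Y\}$ since $\bC\bB\cong \bD\bA\{1,Y,\ldots,Y^{p-1}\}$ shifted by $|Y|$.

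The main point requiring care is matching the formal shift $\{\bar C D_2\}$, where $\bar C$ is the degree-$2(p-1)$ ``operator'' from Section 4 arising from the quotient $q:E\to A$, with the concrete shift $\{YD_2\}$; since only the degree of the generator matters for a graded module isomorphism and these degrees agree, no genuine obstacle arises. The consistency of the whole argument is underwritten by the degree identity $|D_2|=|Y|+|V|$, so the argument is essentially a Poincar\'e-series bookkeeping exercise once the $\bD\bA$-module structures are in hand.
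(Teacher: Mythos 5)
Your proposal is correct and follows essentially the same route as the paper: decompose $H^*(M(2))\cong H^*(L(1,0))\oplus H^*(L(2,0))\cong k[Y]\oplus \bC\bB\{YD_2\}$ and regroup this as $\bC\bB\{Y\}$ (the paper writes this as $(k[Y]\oplus k[Y,D_2]\{D_2\})\{Y\}$, using the standing convention $*>0$ exactly where you identify $k[Y]\{Y\}\cong k[Y]$), then invoke $\bC\bA\cong\bD\bA\oplus\bC\bB\{Y\}$ from the $\bD\bA$-basis $\bC\bA\cong\bD\bA\{1,Y,\dots,Y^p\}$. No substantive difference.
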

\begin{proof}  We can compute
\[H^*(M(2))\cong k[Y]\oplus \bC\bB\{YD_2\}
\cong k[Y]\oplus k[Y,D_2]\{YD_2\} \]
\[\cong (k[Y]\oplus k[Y,D_2]\{D_2\})\{Y\}\cong \bC\bB\{Y\} \quad (assumed \ *>0).\]
Since $\bC\bA\cong \bD\bA\oplus \bC\bB\{Y\}$, we have the last isomorphism in this lemma.
\end{proof}

\section{$C(r)$ groups for $p\ge 3$}

The group $C(r),\ r\ge 3$ is the $p$-group of order $p^r$ such that 
\[ C(r)=\la a,b,a| a^p=b^p=c^{p^{r-2}}=1,\ [a,b]=c^{p^{r-3}}\ra\]
for $r\ge 3$ so that $C(3)=p_+^{1+2}$.  Hence we have a central extension
\[ 0\to \bZ/p^{r-2}\to C(r)\to \bZ/p\times \bZ/p \to 0.\]

For each $r\ge 3$, the  cohomology $H^*(C(r))$ is isomorphic to $H^*(C(3))$.
Denote $C(3)=p_+^{1+2}$ simply by $E$. 
The cohomology of  $E$ is well known.
In particular recall that  ([Lw], [Le1,2],[Te-Ya])
\[(1)\quad H^*(E)\cong (k[y_1,y_2]/(y_1^py_2-y_1y_2^p)\oplus k\{C\}
)\otimes k[v].\]
Here $y_1$ (resp. $y_2$) is the first Chern class
$c_1(e_1)$ (resp. $c_1(e_2)$) for the nonzero linear representation $e_1:E\to \la a\ra \to \bC^*$
(resp. $e_2:E\to \la b\ra \to \bC^*$).
 The  elements $C$ and $v$ are also represented by Chern classes
\[ c_i(Ind_{A}^E(e))=\begin{cases} v\quad for \ i=p\\
                 C\quad for \ i=p-1\\
                   \end{cases}  \]  
                    where $e:A\to \la c\ra \to \bC^*$
                    is a non zero linear representation,
for any maximal elementary abelian subgroup $A$.
Hence $|y_i|=2,|C|=2(p-1),|v|=2p$.
It is well known  $Cy_i=y_i^{p}$, 
$C^2=y_1^{2p-2}+y_2^{2p-2}-y_1^{p-1}y_2^{p-1}$.
In this paper we  write $y_i^{p-1}$ by $Y_i$, and $v^{p-1}$ by $V$.

  From the  Poincare series and formula (1), we get the another
  expression of $H^*(E)$ (Proposition 9 in [Gr-Le], or [Ya2])
  \[(2)\quad H^*(E)\cong k[C,v]\{y_1^iy_2^j|0\le i,j\le p-1,
  (i,j)\not =(p-1,p-1)\}.\]

Let us  write $(\bZ/p)^2$  by $A$ simply.
The $E$ conjugacy classes of $A$-subgroups are written by
\[A_i={\langle}c, ab^i{\rangle}\ for\ 0\le i\le p-1,\quad A_{\infty}={\langle}c,b{\rangle}.\]
For $A=A_i$ some $i$, if we take $y,u$ as $H^*(A)\cong k[y,u]$,
then $C|_A=Y$ and $v|_A=u^p-y^{p-1}u$.
 The transfer map is given by $Tr_{A_0}^E(y)=0$ and
 \[ (3)\quad Tr_{A_j}^E(u^{i})=\begin{cases}
 (jy_1+y_2)^{p-1}-C\quad 
if \ i=p-1\\
                             0\quad for \ i<p-1
                                 \end{cases} \]
                                (for $j =\infty$, we have $tr_{A_{\infty}}^E(u^{p-1})=y_1^{p-1}-C$).

We first  consider the $Out(E)$-module  decomposition of $H^*(E)$. 
Recall that $Out(E)\cong Out(A)\cong GL_2(\bF_p)$. The simple
modules of $G=GL_2(\bF_p)$ are well known.  Let us think of 
$A$ as  the natural two-dimensional representation, and $det$ the determinant representation of $G$. 
Then there are $p(p-1)$ simple $k[G]$-modules given by $S(A)^i\otimes (det)^q$ for $0\le i\le p-1,0\le q\le p-2$ where
\[S(A)^i=k\{y_1^i,\ y_1^{i-1}y_2,\ ,...,\ y_2^i\}\cong H^{2i}(E).\]  
(This $S(A)^i$ is isomorphic to $S(A)^i$ in $\S 4$, 
but take generators $y_1$ (resp. $y_2$) for  $y$ (resp. $u$).

    Recall that $k\{v\}\cong det$ as $Out(E)$-modules.
       Note that  
        \[ \bC\bA=k[C,V]\cong H^*(E)^{Out(E)} . \]

 For   
$j=(p-1)+i$ with $0\le i\le p-2$.  Write it
\[H^i(E)\supset T(A)^i,\quad
T(A)^i=k\{y_1^{p-1}y_2^i,\ y_1^{p-2}y_2^{i+1},\ 
...,\ y_1^iy_2^{p-1}\}.\] 
(Note $S(A)^{p-1}=T(A)^0$.)
Using the the relation $d_2=y_1^py_2-y_1y_2^p=0$ in $H^*(E)$,
 we can consider $T(A)^i$ is an $Out(E)$-module
such that $T(A)^i\cong S(A)^{p-1-i}\otimes det^i[2i]$
from Lemma 4.1.
In fact, from (2), we also have
\[H^*(E)\cong k[C,v]\otimes (\oplus_{i=0}^{p-2}(S(A)^i\oplus T(A)^i)).\]
Hence we have 
              \begin{thm} (Theorem 4.4 in [Hi-Ya1])
              There is a decomposition of $Out(E)$-module such that
              \[ H^*(E)\leftrightarrow  \bC\bA\otimes
              (\oplus_{q=0}^{p-2}\oplus _{i=0}^{p-2}(
     S(A)^i\otimes v^q\oplus T(A)^i\otimes v^q))\]
         \[where \quad  S(A)^i\otimes v^q\cong S(A)^i\otimes det^q ,\quad
        T(A)^i\otimes v^q\cong S(A)^{p-1-i}\otimes det^{i+q}[2i].\]
                \end{thm}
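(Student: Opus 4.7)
My plan is to assemble the theorem from two ingredients already in place in the excerpt: the explicit module description of $H^*(E)$ given in formula (2) (equivalently, the decomposition $H^*(E)\cong k[C,v]\otimes(\oplus_{i=0}^{p-2}(S(A)^i\oplus T(A)^i))$ stated immediately above), and the identification of $T(A)^i$ provided by Lemma 4.1. The task is then to reorganise this into the bigraded sum indexed by $(i,q)$ with $0\le i,q\le p-2$.

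The first step is to analyse the $Out(E)$-action on the polynomial ring $k[C,v]$. Since $C\in H^*(E)^{Out(E)}$ and $k\{v\}\cong det$, the element $v^q$ spans a one-dimensional $Out(E)$-submodule isomorphic to $det^q$. Because $det^{p-1}=1$ in $k[Out(E)]$, the invariant subring contains $V=v^{p-1}$, and one gets a direct sum decomposition of $Out(E)$-modules
\[ k[C,v]\cong \bC\bA\otimes k\{1,v,v^2,\ldots,v^{p-2}\}\cong \bigoplus_{q=0}^{p-2}\bC\bA\otimes det^q, \]
where I am using $\bC\bA=k[C,V]$ and the fact that $\bC\bA$ itself is $Out(E)$-invariant.

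The second step is to tensor this decomposition of $k[C,v]$ with the $Out(E)$-module $\oplus_{i=0}^{p-2}(S(A)^i\oplus T(A)^i)$. Distributing the tensor product and writing $v^q$ as a label on each summand gives
\[ H^*(E)\leftrightarrow \bC\bA\otimes\bigoplus_{q=0}^{p-2}\bigoplus_{i=0}^{p-2}\bigl(S(A)^i\otimes v^q\ \oplus\ T(A)^i\otimes v^q\bigr), \]
which is the displayed decomposition of the theorem.

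The third and final step is to identify the isomorphism type of each summand as an abstract simple $Out(E)$-module. For $S(A)^i\otimes v^q$, the identification $S(A)^i\otimes v^q\cong S(A)^i\otimes det^q$ is immediate from $k\{v^q\}\cong det^q$. For the twisted summand, Lemma 4.1 gives $T(A)^i\cong S(A)^{p-1-i}\otimes det^i[2i]$, and tensoring with $k\{v^q\}\cong det^q$ multiplies the determinantal twist while leaving the degree shift $[2i]$ intact, yielding $T(A)^i\otimes v^q\cong S(A)^{p-1-i}\otimes det^{i+q}[2i]$.

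The only genuine subtlety is the second step: one must check that the decomposition of $k[C,v]$ really is compatible with the $Out(E)$-action on the full $H^*(E)$, i.e.\ that multiplication by $v$ commutes with $Out(E)$-equivariance only up to the determinant twist and does not mix the $S(A)^i$ and $T(A)^i$ components. Since the decomposition of formula (2) uses $y_1^iy_2^j$-monomials with $(i,j)\ne(p-1,p-1)$ and both $C$ and $v$ act by multiplication (not by any twisted action), the $\bC\bA$-module structure is $Out(E)$-equivariant and the tensor factorisation is genuine, so this is really just a bookkeeping point rather than a computational obstacle.
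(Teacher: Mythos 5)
Your proposal is correct and follows essentially the same route as the paper, which derives the theorem from formula (2) (equivalently $H^*(E)\cong k[C,v]\otimes(\oplus_{i=0}^{p-2}(S(A)^i\oplus T(A)^i))$), the splitting $k[C,v]\cong \bC\bA\otimes k\{1,v,\dots,v^{p-2}\}$ with $k\{v\}\cong det$, and Lemma 4.1 for the twist on $T(A)^i$. Your closing remark is the right caveat: since $T(A)^i$ is only a subquotient (the relation $d_2=0$ is what makes it an $Out(E)$-module), the conclusion is the associated-graded statement encoded by the symbol $\leftrightarrow$, which is all the theorem claims.
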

         Let us write by $H_{i,q}$ the  summand of $H^*(E)$ which is a sum
         of the all (sub and quotient) modules isomorphic to $S(A)^i\otimes det^q$.
(In the notation in $\S 3$,
$H_{i,j}^*\cong (i+1)H^*(Y_{i,j}).)$
         \begin{cor}  We have the $Out(E)$-module
decomposition 
\[ H_{i,q}\leftrightarrow  \begin{cases} \bC\bA\otimes v^q\qquad for\ i=0,\\
            \bC\bA\otimes T(A)^0\otimes v^q\quad (T(A)^0=S(A)^{p-1})\quad for \ i=p-1,\\
           \bC\bA\otimes(S(A)^i\otimes v^q\oplus
            T(A)^{p-1-i}\otimes v^{i+q})\quad otherwise.
\end{cases}\]
           \end{cor}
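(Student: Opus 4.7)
The plan is to read off $H_{i,q}$ directly from the $Out(E)$-module decomposition given in Theorem 6.1. Since $H_{i,q}$ is by definition the isotypic sum of all sub- and quotient modules isomorphic to $S(A)^i\otimes det^q$, and $\bC\bA$ acts trivially (as invariants) on each factor, I need only identify which of the summands $\bC\bA\otimes S(A)^j\otimes v^r$ and $\bC\bA\otimes T(A)^j\otimes v^r$ (ranging over $0\le j,r\le p-2$) carry the required isomorphism type, and then collect them.

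First I record what each factor contributes as an $Out(E)$-module, ignoring the grading. The identification $S(A)^j\otimes v^r\cong S(A)^j\otimes det^r$ is built into the statement of Theorem 6.1, so such a summand lands in $H_{j,r}$. The less obvious input is Lemma 4.1 (restated just before Theorem 6.1), which gives $T(A)^j\otimes v^r\cong S(A)^{p-1-j}\otimes det^{j+r}$; this summand therefore contributes to $H_{p-1-j,\,(j+r)\bmod(p-1)}$. Note that I may reindex $v$-powers freely modulo $p-1$, because $v^{p-1}=V\in\bC\bA$ and so $\bC\bA\otimes v^{j+r}$ depends on $j+r$ only modulo $p-1$ as a module over $\bC\bA$; this justifies the plain exponent $v^{i+q}$ in the statement.

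Next I split into cases according to $i$. For $i=0$, the $S$-part yields $\bC\bA\otimes S(A)^0\otimes v^q=\bC\bA\otimes v^q$, while the $T$-part would require $j=p-1$, which is outside the allowed range $0\le j\le p-2$, contributing nothing. For $i=p-1$, the roles reverse: the $S$-part is absent (would need $j=p-1$) and the only contribution is the $T$-part with $j=0$, $r=q$, giving $\bC\bA\otimes T(A)^0\otimes v^q$. For the generic range $1\le i\le p-2$ both pieces survive: the $S$-part contributes $\bC\bA\otimes S(A)^i\otimes v^q$, and the $T$-part with $j=p-1-i$ and $r\equiv q+i\pmod{p-1}$ contributes $\bC\bA\otimes T(A)^{p-1-i}\otimes v^{i+q}$. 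Assembling the three cases reproduces the formula. The only real obstacle is the index bookkeeping — matching $(j,r)$ to $(i,q)$ through Lemma 4.1 and being careful about the reindexing of $v$-powers mod $p-1$; no further cohomological argument is needed once Theorem 6.1 is in hand.
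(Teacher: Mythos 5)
Your argument is correct and is exactly the reading-off of isotypic components from Theorem 6.1 that the paper intends (the paper states the corollary without proof as an immediate consequence). The two points you flag — the identification $T(A)^j\otimes v^r\cong S(A)^{p-1-j}\otimes det^{j+r}$ from Lemma 4.1 and the reduction of $v$-exponents modulo $p-1$ via $v^{p-1}=V\in\bC\bA$ — are precisely the bookkeeping needed, and your case analysis at $i=0$ and $i=p-1$ matches the stated formula.
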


(I) $P=C(r)$, $r>3$.

By Dietz and Priddy, the  stable splitting is known.
The splitting is given as
\[BP\cong \vee (i+1)X_{i,q}\vee (q+1)L(1,q)\vee pL(1,p-1)\]
where $0\le i\le p-1$, $0\le q\le p-2$
and $L(1,p-1)=L(1,0)$. 
Transfers from  proper subgroups are  always zero
when $r>3$.
We have
\[Y_{i,q}=\begin{cases} X_{i,q}\quad q\not =0\\
                                X_{i,0}\vee L(1,i)\quad q=0.
\end{cases} \]

\begin{thm}  Let $P=C(r)$ and $r\ge 4$.  Then 
\[(i+1)H^*(X_{i,q})\cong \begin{cases}
                      H_{i,q}\quad if \ q\not =0\\
                     \bC\bA\otimes(S^i(A)\}\{V\}\oplus T^{p-1-i}(A)v^{i})\quad q=0, i\not=p-1\\
\bC\bA\otimes S^{p-1}(A)\{V\}\quad q=0,\ i=p-1.
\end{cases}\]
\end{thm}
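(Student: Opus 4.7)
The plan is to exploit the refinement $Y_{i,q}=X_{i,q}$ for $q\not=0$ and $Y_{i,0}=X_{i,0}\vee L(1,i)$ noted just before the theorem, together with the $Out(E)$-isotypic decomposition of $H^*(E)$ in Corollary 6.2. By construction of $Y_{i,q}$ as the image of the $Out(E)$-idempotent attached to $S(A)^i\otimes det^q$, one already has $(i+1)H^*(Y_{i,q})\cong H_{i,q}$, so the left-hand side of the desired formula is produced from the $Out$-level decomposition up to pulling out the $L(1,i)$ contribution.

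First I would dispatch the case $q\not=0$: since $X_{i,q}=Y_{i,q}$, the identity $(i+1)H^*(X_{i,q})\cong H_{i,q}$ gives the first line of the statement at once, via Corollary 6.2. The substantive work lies in the case $q=0$, where
\[(i+1)H^*(X_{i,0})\cong H_{i,0}\ominus (i+1)H^*(L(1,i)).\]
The pivotal step is to identify the right-hand summand $(i+1)H^*(L(1,i))=(i+1)k[Y]\{y^i\}$ with the graded submodule $k[C]\otimes S(A)^i$ sitting inside $H_{i,0}$; this matches both degrees and multiplicities, since $|C|=|Y|=2(p-1)$ and $\dim S(A)^i=i+1$, and the identification is forced by the restriction $C|_A=Y$ to any maximal elementary abelian subgroup.

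Granting this identification, the remainder is purely algebraic bookkeeping. For $1\le i\le p-2$, Corollary 6.2 gives $H_{i,0}\cong \bC\bA\otimes S(A)^i\oplus \bC\bA\otimes T(A)^{p-1-i}v^i$, and the subtraction yields
\[(k[C,V]\ominus k[C])\otimes S(A)^i\oplus \bC\bA\otimes T(A)^{p-1-i}v^i\cong \bC\bA\otimes (S(A)^i\{V\}\oplus T(A)^{p-1-i}v^i),\]
as required. For $i=p-1$ the $T$-piece is absent from $H_{p-1,0}\cong\bC\bA\otimes S(A)^{p-1}$, and the identical subtraction, using $pH^*(L(1,p-1))\cong k[C]\otimes S(A)^{p-1}$, leaves $\bC\bA\{V\}\otimes S(A)^{p-1}$.

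The principal obstacle is the identification of the $L(1,i)$ summands inside the $Out(E)$-isotypic components. It rests on the hypothesis $r\ge 4$, under which the text records that all transfers from proper subgroups vanish; consequently each $L(1,i)$ factor can only arise through a split projection $P\to\bZ/p$, and its image inside $H_{i,0}$ is pinned down by the pulled-back polynomial generator, which justifies the identification with $k[C]\otimes S(A)^i$ used above.
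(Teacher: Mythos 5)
Your proposal is correct and follows essentially the same route as the paper's own proof: dispatch $q\neq 0$ via $X_{i,q}=Y_{i,q}$, and for $q=0$ compute $H_{i,0}\ominus(i+1)H^*(L(1,i))$ by identifying $(i+1)k[Y]\{y^i\}$ with $k[C]\otimes S(A)^i$ and using $\bC\bA\ominus k[C]\cong\bC\bA\{V\}$. The only addition is your explicit justification (vanishing transfers for $r\ge 4$, restriction $C|_A=Y$) of where the $L(1,i)$ summand sits inside $H_{i,0}$, which the paper leaves implicit.
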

\begin{proof}
We only need to prove $q=0$.
Note that  $\bC\bA\cong k[C,V]\cong k[C]\oplus \bC\bA\{V\}$.
So we have  $\bC\bA\ominus k[C]\cong \bC\bA\{V\}$.
Then we can compute as
\[ \bC\bA\otimes S^i(A)\ominus (i+1)k[C]\{y^i\}\]
\[ \cong \bC\bA\{V\}
\{y_1^i,y_1^{i-1}y_2,...,y_2^i\}
\cong \bC\bA\otimes S(A)^i\{V\}.\]
Using this and 
\[(i+1)H^*(X_{i,0})\cong (i+1)H^*(Y_{i,0})\ominus (i+1)H^*(L(1,i))\]
we can get the theorem (for $q=0$).
\end{proof}

(II) $C(3)=p_+^{1+2}$.
  
In this case, the decomposition of 
cohomology is given in [Hi-Ya1] but it is quite complicated.  
By Dietz-Priddy,
 the splitting is given as
\[BP\cong \vee (i+1)_{i,q}X_{i,q}\vee \vee_k (p+1)L(2,q)\vee_q (q+1)L(1,q)\vee pL(1,p-1)\]
where $0\le i\le p-1$ and $0\le q\le p-2$.
The different places from $r\ge 4$ are  the existence of $L(2,q)$ which are induced from the transfer
(see $\S 9$ in [Hi-Ya1] for details).
\begin{lemma}
We have the isomorphisms 
\[ (p+1)\oplus_{q=1}^{p-1}H^*(L(2,q))\cong
   \bC\bB\otimes (\oplus _{q=1}^{p-1}\oplus_{j\in \bF_p\cup \infty}(1)_{q,j})\qquad\] $\ \ \ \ \ $
$\qquad$ $\qquad$ $\qquad$ $\qquad$  $\cong \bC\bB\otimes (\oplus _{q=1}^{p-1}(2)_q)$ where 
\[\qquad  (1)_{q,j}=k\{Tr_{A_j}^E(u^{p-1})d_2(A_j)^q\}\cong k\{ ((jy_1+y_2)^{p-1}-C)d_2(A_j)^q\},\] 
\[ (2)_q=
\begin{cases} (S(A)^q\{C\otimes v^q\}\oplus
T(A)^q\otimes v^q)\quad 1\le q\le p-2\\
        (S(A)^{p-1}\oplus k\{C\})\{D_2\}\quad q=p-1
\end{cases}\]
where $d_2(A_j)=v(y_1+jy_2)$.
\end{lemma}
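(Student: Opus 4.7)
The plan is to realize the $(p+1)$-fold copy of $H^*(L(2,q))$ as a sum of transfer images from the $p+1$ conjugacy classes of maximal elementary abelian subgroups $A_j \le E$ ($j \in \bF_p \cup \{\infty\}$), compute each transfer via Frobenius reciprocity to obtain the first isomorphism, and then perform an explicit change of basis to pass to the $(2)_q$-description. By the non-dominant summand principle recalled at the end of $\S 2$ together with Theorem 2.3, $L(2,q) = X_{S(E, A, S(A)^{p-1} \otimes det^q)}$ is a transfer image from $BA$; since there are $p+1$ conjugacy classes of such maximal elementary abelian subgroups in $E = p_+^{1+2}$ and each produces an isomorphic copy, one obtains
\[(p+1) H^*(L(2,q)) \cong \bigoplus_{j} Tr_{A_j}^E(H^*(L(2,q)_{A_j})),\]
and Theorem 4.4 identifies each $H^*(L(2,q)_{A_j}) \cong \tilde{\bC}\bB\{\bar C d_2^q\}$.

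Next, I compute each transfer image. The generator $\bar C d_2^q$ is represented by $u^{p-1} d_2^q \in H^*(A_j)$; since $d_2 \in H^*(A_j)$ is the restriction of $d_2(A_j) \in H^*(E)$ (directly verified using $v|_{A_j} = u^p - y^{p-1} u$ together with the restrictions of $y_1, y_2$ to $A_j$), Frobenius reciprocity gives
\[Tr_{A_j}^E(u^{p-1} d_2^q) = Tr_{A_j}^E(u^{p-1}) \cdot d_2(A_j)^q = \bigl((jy_1 + y_2)^{p-1} - C\bigr)\, d_2(A_j)^q,\]
using formula (3). The free $\tilde{\bC}\bB = k[\bar C, D_2]$ structure likewise lifts to a free $\bC\bB = k[C, D_2]$ structure on the transfer image, because $C \in H^*(E)$ restricts to the $\bar C$-generator and a suitable class in $H^*(E)$ restricts to $D_2$. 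This yields the first isomorphism $(p+1) H^*(L(2,q)) \cong \bC\bB \otimes \bigoplus_{q,j} (1)_{q,j}$.

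To pass to the $(2)_q$ description, I expand each generator as
\[\bigl((jy_1 + y_2)^{p-1} - C\bigr)\, v^q\, (y_1 + jy_2)^q\]
and decompose it into a $-C$-part and a $(jy_1+y_2)^{p-1}$-part. For $1 \le q \le p-2$, as $j$ ranges over the $p+1$ values, the former produces generators $-C v^q (y_1 + jy_2)^q$ whose $\bC\bB$-span is $S(A)^q\{Cv^q\}$ of $\bC\bB$-rank $q + 1$, while the latter, after reduction by the relation $y_1^p y_2 = y_1 y_2^p$ from formula (1), gives generators that span $T(A)^q \otimes v^q$ of rank $p - q$. The sum has total rank $p + 1$, matching the rank of $\bigoplus_j (1)_{q,j}$, so no collision occurs and the decomposition is forced. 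For $q = p - 1$, $d_2(A_j)^{p-1}$ contains an explicit $V = v^{p-1}$ factor, which combines with the $y$-variables and $C$ to give precisely $(S(A)^{p-1} \oplus k\{C\})\{D_2\}$ upon identifying $D_2$ in $H^*(E)$.

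The main obstacle is the Vandermonde-type change of basis in the final step: showing the $p+1$ generators indexed by $j \in \bF_p \cup \{\infty\}$, although evaluated in spaces of bounded dimension in $y_1, y_2$, are nevertheless $\bC\bB$-linearly independent and together span exactly the prescribed module $(2)_q$. The borderline case $q = p - 1$ is the most delicate, since the factor $(y_1 + jy_2)^{p-1}$ saturates the available $y$-degree and its interaction with the defining relation of $H^*(E)$ and with the chosen lift of $D_2$ must be computed explicitly.
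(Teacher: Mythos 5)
Your overall strategy coincides with the paper's: realize the $(p+1)$ copies of $H^*(L(2,q))$ as the transfer images from the $p+1$ classes $A_j$, compute $Tr_{A_j}^E(u^{p-1}d_2^q)=((jy_1+y_2)^{p-1}-C)d_2(A_j)^q$ from formula (3) by Frobenius reciprocity, expand each generator into its $C$-part and its polynomial part to get the inclusion $\oplus_j(1)_{q,j}\subset (2)_q$, and finish by comparing dimensions. Up to that point the proposal matches the paper's outline.

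The gap is exactly the step you flag as ``the main obstacle'' and then do not close. The assertion that ``the sum has total rank $p+1$, matching the rank of $\oplus_j(1)_{q,j}$, so no collision occurs and the decomposition is forced'' is a non sequitur: writing each generator as $v_j=a_j+b_j$ with $a_j$ the $C$-component and $b_j$ the polynomial component, even if the $a_j$ span a $(q+1)$-dimensional space and the $b_j$ span a $(p-q)$-dimensional space, the $p+1$ vectors $v_j$ need not be independent --- a relation $\sum c_ja_j=0=\sum c_jb_j$ is not excluded, since the two kernels have dimensions $p-q$ and $q+1$, which sum exactly to the ambient dimension $p+1$ and so may well intersect nontrivially; likewise, spanning of the two parts separately does not give spanning of $(2)_q$ by the $v_j$ themselves. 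The paper settles this with a device you do not use: restriction to the subgroups $A_k$. By the double coset formula, $(1)_{q,j}|_{A_k}$ equals $(y_1+jy_2)^{p-1}d_2(A_j)^q\neq 0$ when $j=k$ and vanishes otherwise, so the $p+1$ elements are detected on pairwise distinct subgroups and are therefore linearly independent; combined with $\dim (2)_q=p+1$ (read off from Theorem 6.1) and the inclusion already established, this forces $\oplus_j(1)_{q,j}=(2)_q$. You should substitute this (or an equivalent independence argument) for the Vandermonde computation you propose, which is both harder and, in the borderline case $q=p-1$ you yourself single out, genuinely delicate because of the interaction with the relations $y_1^py_2=y_1y_2^p$ and $Cy_i=y_i^p$ in $H^*(E)$.
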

\begin{proof}[Outline of Proof]
(See $\S 9$ in [Hi-Ya1] for details.)
Let $x\in (1)_{q,j}\subset 
(k\{C\}\oplus S(A)^{p-1})\otimes d_2^q$.
Then  for $\mu, \lambda_i \in \bZ/p$, the element $x$ is written as 
\[ \mu C\otimes d_2^q+\sum_{i}\lambda_i y^i_1y_2^{p-1-i}\otimes d_2^q\]
\[=
\mu Cy_1^q\otimes v^q+\sum_i\lambda_i y^{i+q}_1y_2^{p-1-i}\otimes v^q\]
\[ \in (S(A)^q\{C\otimes v^q\}\oplus T(A)^q\otimes v^q)
\subset H_{q,q}\oplus H_{p-1-q,2q}.\]
The last inclusion follows from 
$T^q\otimes v^q\cong S(A)^{p-1-q}\otimes det^{q+q}$
as $Out(P)$-modules.
Hence we see 
$\oplus_{j\in \bF_p\cup \infty}(1)_{q,j} \subset (2)_{q}$.
Since 
\[(1)_{q,j}|A_k=\begin{cases} (y_1+jy_2)^{p-1}d_2(A_j)^q\quad j=k\\
            0\qquad otherwise,
            \end{cases}\]
            we have $dim(\oplus_{j\in \bF_p\cup \infty}(1)_q)\ge p+1$.
            Of course (from Theorem 6.1), we have $dim(2)_q= p+1$.
            Therefore we show that $\oplus_{j\in \bF_p\cup \infty}(1)_q=(2)_q$.
                   \end{proof} 
Note that
$(p+1)H^*(L(2,q))\subset H_{q,q}\oplus H_{p-1-q,i+q},$
however each $H^*(L(2,q))$ is contained in either
$H^*(Y_{q,q})$
or $ H^*(Y_{p-1-i,i+q})$.
\begin{thm}
The $Out(E)$-module decomposition
\[ H^*(E)\leftrightarrow \oplus_{i,q} H_{i,q}\cong
\oplus_{i,q} \bC\bA\otimes (S(A)^i\otimes v^q\oplus T(A)^{p-1-i}\otimes
v^{2q})\]
gives  simple $A(E,E)$-modules decomposition
by  each of the following sums of $Out(E)$-simple modules  
\[(1)\quad S(A)^{p-1}\oplus k\{C\} \]
\[(2)\quad S(A)^q\{C\}\otimes v^q
\oplus T(A)^q\otimes v^q \]
as one $A(E,E)$-simple module
 for  $1\le q\le p-2$. That is
 \[(1)\cong S(E,\bZ/p,det^0)[2p-2],\quad
 (2)\cong S(E,A, S(A)^{p-1}\otimes det^q)[2(p-1)q].\]
\end{thm}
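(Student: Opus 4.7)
The plan is to identify each of the sums (1) and (2) with the cohomology, in its lowest degree, of a non-dominant stable summand $L(2,q)$ of $BE$, and then deduce $A(E,E)$-simplicity by combining Lemma 2.2 with the explicit transfer formula of Lemma 6.3.

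First I would invoke the Benson-Feshbach / Martino-Priddy parameterization (Theorem 2.3) together with the Dietz-Priddy splitting of $BE$ to identify $L(2,q)$ with the summand $X_{S(E,A,S(A)^{p-1}\otimes det^q)}$: its stable-splitting multiplicity $p+1$ matches $dim\,S(A)^{p-1}\otimes det^q = p+1$. By Lemma 2.2, $H^*(L(2,q))$ contains only copies of the single $A(E,E)$-simple $S(E,A,S(A)^{p-1}\otimes det^q)$. Lemma 6.3 already computes $(p+1)H^*(L(2,q)) \cong \bC\bB \otimes (2)_q$, so the minimal-degree $Out(E)$-content is exactly $(2)_q$, which must therefore be a single copy of that simple. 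A dimension count, $(q+1) + (p-q) = p+1$ in case (2) and also $p+1$ in case (1) $= (2)_{p-1}$, confirms that no further $A(E,E)$-decomposition is possible.

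The main obstacle, and the reason Lemma 6.3 is essential, is to show that the two distinct $Out(E)$-simple constituents of $(2)_q$ cannot be separated by any element of $A(E,E)$. The explicit formula $(1)_{q,j} = k\{((jy_1+y_2)^{p-1}-C)\,d_2(A_j)^q\}$ exhibits a single transfer element $Tr_{A_j}^E(u^{p-1})\,d_2(A_j)^q$ whose expansion has nonzero projections onto both $Out(E)$-summands of $(2)_q$, so any $A(E,E)$-submodule of $(2)_q$ containing this element must equal all of $(2)_q$. Hence $(2)_q$ is an $A(E,E)$-simple module. Finally, the re-indexing of case (1) as $S(E,\bZ/p,det^0)[2p-2]$ reflects that at $q=p-1$ the character $det^{p-1}$ of $GL_2(\bF_p)$ is trivial, so in the Benson-Feshbach convention the simple is represented by the proper cyclic subgroup $\bZ/p$; the shifts $[2p-2]$ and $[2(p-1)q]$ then record the positions of the lowest-degree generators $C\{D_2\}$ and $C\otimes v^q$ inside $H^*(E)$.
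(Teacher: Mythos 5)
Your strategy --- reading off simplicity from the Dietz--Priddy multiplicities, Lemma 2.2, and the transfer computation of Lemma 6.4 --- is genuinely different from the paper's, which instead exhibits explicit elements $\Phi_1,\Phi_2\in A(E,E)$ factoring through $\la c\ra\cong\bZ/p$ (restrict to $A_0$ resp.\ $A_\infty$, project to $\la c\ra$, re-embed, transfer back) and computes $(\Phi_1-\Phi_2)(C)=y_1^{p-1}-y_2^{p-1}$ and $\Phi_1(y_1^{p-1})=y_2^{p-1}-C$. Those two computations show that \emph{neither} $Out(E)$-isotypic piece of $S(A)^{p-1}\oplus k\{C\}$ is an $A(E,E)$-submodule; since these two pieces are the only candidate proper submodules, simplicity follows, and case (2) is handled the same way with $\Phi_1',\Phi_2'$. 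Your route has three concrete problems. First, your separation step proves only that $(2)_q$ is generated by the single element $(1)_{q,j}$, i.e.\ that it is cyclic; cyclicity does not exclude $S(A)^q\{C\}\otimes v^q$ or $T(A)^q\otimes v^q$ from being a proper submodule, which is exactly the half of the argument the paper's second computation supplies. Second, the dimension matching is off: $dim(S(A)^{p-1}\otimes det^q)=p$, not $p+1$; the number $p+1$ that closes your count is $dim\, S(E,A,S(A)^{p-1}\otimes det^q)$, which you can only obtain by importing the multiplicity of $L(2,q)$ from the Dietz--Priddy splitting via Theorem 2.3 --- legitimate as an input, but then your ``dimension count'' assumes the size of the simple module rather than establishing it.

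Third, and most seriously, case (1) is not the $q=p-1$ member of your family. The module $(1)=S(A)^{p-1}\oplus k\{C\}$ sits in degree $2(p-1)$ and is identified in the theorem with $S(E,\bZ/p,det^0)$, the simple attached to the \emph{cyclic} subgroup, i.e.\ to the summand $L(1,0)$; whereas $(2)_{p-1}=(S(A)^{p-1}\oplus k\{C\})\{D_2\}$ lies $2(p^2-1)$ degrees higher, inside the transfer image from the $A_j$'s, and belongs to the $L(2,0)$-isotypic part. These correspond to the two distinct simples $S(E,\bZ/p,det^0)$ and $S(E,A,S(A)^{p-1}\otimes det^0)$ and to the two distinct indecomposables $L(1,0)\neq L(2,0)$; your closing remark that ``$det^{p-1}$ is trivial, so the simple is represented by the cyclic subgroup'' conflates them. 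Since $(1)$ is not contained in the transfer image from the maximal elementary abelian subgroups, your mechanism (lowest-degree piece of $(p+1)H^*(L(2,q))$) never reaches it, and you would still need the paper's direct computation --- or some substitute showing that all of $H^{2(p-1)}(E)$ lies in the $L(1,0)$-isotypic component --- to settle case (1).
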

\begin{proof}[Outline of Proof.]
We prove that $(1)$ is a simple $A(E,E)$-module. 
We consider $\Phi_1,\Phi_2\in A(E,E)$
\[\Phi_1:P> A_{0}\to \la c\ra \stackrel{w}{\cong} 
\la a \ra \subset A_{0}\subset P,\] 
\[\Phi_2:P> A_{\infty}\to \la c\ra \stackrel{w}{\cong} 
\la b \ra \subset A_{\infty}\subset P.\]
Then we see 
$(\Phi_1-\Phi_2)(C)=y_1^{p-1}-y_2^{p-1}\in S^{p-1}(A),$ and 
\[\Phi_1(y_1^{p-1})=y_2^{p-1}-C\in S^{p-1}(A)\oplus k\{C\}.\]
Let $B=(S^{p-1}(A)\oplus k\{C\})$.  Then from the first equation, 
$B/k\{C\}$ is not an $A(P,P)$-module.
From the second one, we see that $B/S^{p-1}(A)$ is also not an
$A(P,P)$-module.  Hence $B$ is a simple $A(P,P)$-module.

The fact that $(2)$ is isomorphic to a  $A(P,P)$ simple module is proved similarly 
using  $\Phi_1'$ and $\Phi_2'$ defined by
\[\Phi_1':P> A_0
\stackrel{a\leftrightarrow c}{\cong}  A_0\subset P,
\quad 
\Phi_2':P> A_{\infty}\stackrel{b\leftrightarrow c}{\cong}
 A_{\infty}\subset E.\]
\end{proof}
Note that
$dim(2)=(q+1)+(p-1-q+1)=p+1.$
In fact, this is the number of $L(2,q)$ in $BE$.

Let $i=0$ or $p-1$.  Then we see
\[ Y_{i,q}\cong \begin{cases} X_{i,q}\quad q\not =0\\
                      X_{i,0}\vee L(2,0)\vee L(1,0)\quad q=0.
\end{cases}\]
Using this we can prove
\begin{thm} (Corollary 10.7 in [Hi-Ya1])
We have for $1\le q\le p-2$,
\[H^*(X_{0,0})\cong \bD\bA, \quad pH^*(X_{p-1,0})\cong \bD\bA\otimes S(A)^{p-1}\{V\},\]
\[H^*(X_{0,q})\cong H_{0,q}\cong \bC\bA\{v^q\}, \quad pH^*(X_{p-1,q})\cong H_{p-1,q}
\cong \bC\bA\{S(A)^{p-1}\otimes v^q\}.\]
 \end{thm}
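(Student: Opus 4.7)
The plan is to compute $H^*(X_{i,q})$ for $i\in\{0,p-1\}$ by comparing it with $H^*(Y_{i,q})$, using the decomposition of $Y_{i,q}$ stated immediately before the theorem.

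For $q\neq 0$, we have $Y_{i,q}\cong X_{i,q}$, so $(i+1)H^*(X_{i,q}) = H_{i,q}$. Both formulas for $X_{0,q}$ and $X_{p-1,q}$ then follow directly from Corollary 6.2 (using the identification $T(A)^0 = S(A)^{p-1}$): for $i = 0$, $H_{0,q}\cong \bC\bA\otimes v^q$, and for $i = p-1$, $H_{p-1,q}\cong \bC\bA\otimes S(A)^{p-1}\otimes v^q$.

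For $q = 0$, we use $Y_{i,0}\cong X_{i,0}\vee L(2,0)\vee L(1,\cdot)$ and subtract cohomologies, relying on the $\bD\bA$-module identity
\[
\bC\bA \cong \bD\bA \oplus \bC\bB\{Y\} \cong \bC\bB \oplus \bD\bA\{V\}
\]
(two compatible summand decompositions coming from $\bC\bA = \bD\bA\{1,Y,\ldots,Y^p\}$). For $i = 0$: Corollary 6.2 yields $H^*(Y_{0,0})\cong \bC\bA$, and Lemma 5.6 identifies the contribution of $L(2,0)\vee L(1,0)$ with $\bC\bB\{Y\}$, leaving $H^*(X_{0,0})\cong \bD\bA$. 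For $i = p-1$: $pH^*(Y_{p-1,0})\cong \bC\bA\otimes S(A)^{p-1}$; the $p$ copies of $L(2,0)$ contribute $\bC\bB\{D_2\}\otimes S(A)^{p-1}$ (the $S(A)^{p-1}\{D_2\}$ piece of $(2)_{p-1}$ in Lemma 6.3), and the $p$ copies of $L(1,p-1)$ contribute $k[Y]\{Y\}\otimes S(A)^{p-1}$. In positive degrees one has $\bC\bB\{D_2\}\oplus k[Y]\{Y\} = \bC\bB$, so the subtraction $\bC\bA\ominus \bC\bB = \bD\bA\{V\}$ produces $pH^*(X_{p-1,0})\cong \bD\bA\otimes S(A)^{p-1}\{V\}$.

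The main obstacle is the bookkeeping in the $q = 0$, $i = p-1$ case: one must correctly distribute the $(p+1)$ copies of $L(2,0)$ across $Y_{0,0}$ and $Y_{p-1,0}$ as $1 + p$, matching the splitting $(2)_{p-1} = k\{C\}\{D_2\}\oplus S(A)^{p-1}\{D_2\}$ of Lemma 6.3, and similarly for the $L(1,\cdot)$ summands (the $p$ copies of $L(1,p-1)$ landing in $Y_{p-1,0}$ and the single $L(1,0)$ in $Y_{0,0}$). Once this distribution is fixed using Lemma 6.3 and the transfer formulas (3) of this section, the remaining algebra reduces to the elementary identity $\bC\bA = \bC\bB \oplus \bD\bA\{V\}$.
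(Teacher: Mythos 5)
Your proposal is correct and follows essentially the same route as the paper: subtract the non-dominant summands $L(2,0)$ and $L(1,\cdot)$ from $H^*(Y_{i,0})$ using the decomposition stated before the theorem together with Lemma 6.3 and the identity $\bC\bA\cong \bD\bA\oplus\bC\bB\{Y\}$ (the paper's citation of ``Lemma 5.4'' is a typo for Lemma 5.6, which you correctly identified). The paper only writes out the $X_{0,0}$ case and says the rest are similar; your treatment of the $i=p-1$, $q=0$ bookkeeping (distributing the $p+1$ copies of $L(2,0)$ as $1+p$ according to the splitting $(2)_{p-1}=(S(A)^{p-1}\oplus k\{C\})\{D_2\}$) is exactly the intended ``similar'' argument.
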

 \begin{proof}[Proof of the first isomorphism]
Recall $H^*(L(2,0)\vee L(1,0))\cong \bC\bB\{C\}$
from Lemma 5.4.  Hence  we see
\[H^*(X_{0,0})\cong \bC\bA\ominus \bC\bB\{C\}\cong \bD\bA.\]
The other cases are proved similarly.
\end{proof}
 
 {\bf Examples.}
See $\S 6$ in [Ya2] for $p=3$ case.
For the sporadic  simple group $J_4$ and the twisted Chevalley group $^2F_4'$,
we have the isomorphisms 
\[ H^*(J_4)\cong H^*(X_{0,0})\cong  \bD\bA,\quad (Green\ [Gr])\]
\[H^*(^2F_4')\cong H^*(X_{0,0}\vee X_{2,0})\cong \bD\bA\{1,YV\},\]
\[H^*(E)^{SD_{16}}\cong 
H^*(X_{0,0}\vee L(2,0)\vee L(1,0))\cong \bC\bA,\]

 \begin{thm} (Corollary 10.8 in [Hi-Ya1])
Let $1\le i\le p-2$. 
Let us write simply
\[ S=S(A)^i\otimes v^q,\quad T=T(A)^{p-1-i}\otimes v^{i+q}.\]
Then we have the isomorphism
\[(1+i)H^*(X_{i,q})\cong
\begin{cases}
\bD\bA\otimes(S\oplus T\{V\})\quad if\ i=q\not =0,\ 3q\equiv 0\ (mod(p-1))\\
          \bD\bA\otimes S \oplus \bC\bA\otimes T
\quad if\ i=q,\ 3q\not \equiv 0\\
\bC\bA\otimes S\oplus \bD\bA\otimes T\{V\}\quad
   if \ q\equiv-2i\not\equiv0,\ 3i\not \equiv 0\\
\bC\bA\otimes S\{V\}\oplus \bD\bA\otimes T\{V\}
\quad if \  q\equiv0,\ 2i\equiv0\\
\bC\bA\otimes S\{V\}\oplus \bC\bA\otimes T
\quad if \  q\equiv0,\ 2i\not \equiv0\\
H_{i,q}\cong \bC\bA\otimes (S\oplus T)\quad   otherwise.
\end{cases} \]
\end{thm}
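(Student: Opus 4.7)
The plan is to compute $(i+1)H^*(X_{i,q})$ by starting from the $\Out(E)$-isotypic decomposition of $Y_{i,q}$ supplied by Corollary 6.2 and subtracting the cohomology of the extra irreducible summands that lie inside $Y_{i,q}$ besides $X_{i,q}$ itself. By Corollary 6.2,
\[(i+1)H^*(Y_{i,q}) = H_{i,q}\cong \bC\bA\otimes (S\oplus T),\quad S=S(A)^i\otimes v^q,\ T=T(A)^{p-1-i}\otimes v^{i+q},\]
and the Benson--Feshbach and Martino--Priddy splitting together with Theorem 6.6 tells us that any $L(1,\cdot)$ or $L(2,\cdot)$ whose simple $A(E,E)$-module has $\Out(E)$-isotype $S(A)^i\otimes \det^q$ also sits inside $Y_{i,q}$. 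Thus $(i+1)H^*(X_{i,q})$ equals $H_{i,q}$ with the cohomology of those extras subtracted.

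By Lemma 6.4, the $S$-portion of $L(2,q')$ is $\bC\bB\{C\}\otimes S(A)^{q'}v^{q'}$, of $\Out(E)$-type $S(A)^{q'}\otimes \det^{q'}$, so it enters $Y_{i,q}$ precisely when $q'=i$ and $q\equiv i\pmod{p-1}$; the $T$-portion of $L(2,q')$ is $\bC\bB\otimes T(A)^{q'}v^{q'}$, of type $S(A)^{p-1-q'}\otimes \det^{2q'}$, and enters $Y_{i,q}$ precisely when $q'=p-1-i$ and $q\equiv -2i\pmod{p-1}$. Exactly as in the proof of Theorem 6.3, $L(1,i)$ contributes $k[C]\otimes S$ to $Y_{i,0}$ whenever $q=0$. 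The six cases in the theorem correspond to the six mutually exclusive combinations of these three possible contributions: case (b) $i=q$, $3q\not\equiv 0$ has only the $L(2,i)$-$S$ contribution; case (a) $i=q$, $3q\equiv 0$ has both $L(2,i)$ via $S$ and $L(2,p-1-i)$ via $T$ (the condition $3i\equiv 0$ forcing $-2i\equiv i$); case (c) $q\equiv -2i\not\equiv 0$, $3i\not\equiv 0$ has only $L(2,p-1-i)$ via $T$; case (d) $q=0$, $2i\equiv 0$ has $L(1,i)$ together with $L(2,p-1-i)$ via $T$ (and here $L(2,p-1-i)=L(2,i)$); case (e) $q=0$, $2i\not\equiv 0$ has only $L(1,i)$; and case (f) has no extra contribution.

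In each case the formula then reduces to the $\bD\bA$-module identities
\[\bC\bA\ominus \bC\bB\{C\}\cong \bD\bA,\qquad \bC\bA\ominus \bC\bB\cong \bD\bA\{V\},\qquad \bC\bA\ominus k[C]\cong \bC\bA\{V\},\]
which follow from the decompositions $\bC\bA=\bD\bA\oplus\bC\bB\{C\}$ (as $\bD\bA$-modules) and $\bC\bA=k[C]\oplus \bC\bA\{V\}$ (as $k[C]$-modules). Substituting these into $H_{i,q}$ minus the identified contributions produces each of the six displayed isomorphisms. The main technical hurdle is verifying that the $T$-piece of $L(2,p-1-i)$, whose natural presentation in Lemma 6.4 uses the $v$-exponent $p-1-i$, really agrees as a submodule of $H_{i,q}$ with the $T$-summand whose formal generator $v^{i+q}$ appears in Corollary 6.2: this requires observing that $i+q\equiv p-1-i\pmod{p-1}$ holds exactly when $q\equiv -2i$, and this arithmetic is precisely what selects the cases in which two contributions coincide on a single $Y_{i,q}$.
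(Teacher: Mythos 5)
Your proposal is correct and follows essentially the same route as the paper: identify, via the $\Out(E)$-isotype analysis of Lemma 6.4 and Theorem 6.6, which $L(1,\cdot)$ and $L(2,\cdot)$ summands land in each $Y_{i,q}$, and then subtract their cohomology from $H_{i,q}$ using the identities $\bC\bA\ominus\bC\bB\{C\}\cong\bD\bA$, $\bC\bA\ominus\bC\bB\cong\bD\bA\{C^p\}\cong\bD\bA\{V\}$, and $\bC\bA\ominus k[C]\cong\bC\bA\{V\}$. The only difference is one of completeness: the paper works out only the first case ($i=q$, $3q\equiv 0$) and asserts the rest are similar, whereas you carry out the case bookkeeping for all six branches, which matches Corollary 6.8.
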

\begin{proof}[Outline of Proof.]
From the proof of Lemma 5.5, we see
\[ \oplus _jTr_{A_j}^E(H^*(L(2,q))\subset  H_{q,q}\oplus H_{p-1-q,2q}.\]
We prove the first isomorphism.
Suppose $3q\equiv 0,\ q\not \equiv 0\ mod(p-1)$.  Then
( see the proof of Lemma 6.4)
\[\oplus _jTr_{A_j}^E(H^*(L(2,2q))\subset  H_{2q,2q}\oplus H_{q,q}\]
since $H_{p-1-2q,4q}=H_{q,q}$.  Using this we can prove
\[ Y_{q,q}\cong X_{q,q}\vee L(2,q) \vee L(2,2q).\]
Hence we can compute
\[(q+1)H^*(X_{q,q})\cong (\bC\bA\otimes S\ominus (q+1)\bC\bB\{Cd^{q}\})\]
 \[ \oplus  (\bC\bA\otimes T\ominus (q+1) \bC\bB\{Cd^{2q}\}).\]
Here $d^q=y^qv^q\in S=S^q(A)\{v^q\}$ and 
$Cd_2^{2q}=(Cy^{2q})v^{2q}\in T=T(A)^{2q}\{v^{2q}\}$.
Hence 
\[\bC\bA\otimes S\ominus (q+1)\bC\bB\{Cd^{q}\} \cong 
      (\bC\bA\ominus \bC\bB\{C\})\{S^q(A)v^q\}\cong \bD\bA\otimes S.\]
\[\bC\bA\otimes T\ominus (q+1)\bC\bB\{Cd^{2q}\} \cong 
      (\bC\bA\ominus \bC\bB)\{T^{2q}v^{2q}\}\cong \bD\bA\{C^p\}\otimes T.\]
Note $D_1=C^p+V$.
Thus we can prove the first isomorphism.  The other isomorphisms are proved similarly.
\end{proof}
We write down here the splitting in the all cases.
\begin{cor}  For $1\le i\le p-2$,  
there are stable homotopy equivalences
 \[Y_{i,q}\cong
\begin{cases}
X_{q,q}\vee L(2,q)\vee L(2,2q)\\
\qquad \quad  if\ i=q\not =0,\ 3q\equiv 0\ (mod(p-1))\\
X_{q,q}\vee L(2,q)\quad if\ i=q,\ 3q\not \equiv 0\\
X_{i,-2i}\vee L(2,-i) \quad
   if \ q\equiv -2i
\not\equiv 0,
\ 3i\not 
\equiv 0\\
X_{i,0}\vee L(1,i)\vee  L(2,-i) 
\quad if \  q\equiv0,\ 2i\equiv0\\
X_{i,0}\vee L(1,i) \quad if \  q\equiv0,\ 2i\not \equiv0\\
X_{i,q} \quad   otherwise.
\end{cases} \]
\end{cor}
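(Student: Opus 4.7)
The plan is to exploit the fact that Dietz-Priddy already provide the complete stable indecomposable splitting of $BE$ into summands $X_{i',q'}$, $L(2,q')$, and $L(1,i')$; what remains is to distribute these summands among the blocks $Y_{i,q}=e_{S(A)^i\otimes det^q}BE$ cut out by the $Out(E)$-idempotents. Since each $X_{i,q}$ sits in $Y_{i,q}$ by construction, only the placement of the $L(2,q')$'s and $L(1,i')$'s needs to be determined.

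The key input will be Lemma 6.4, which shows that $H^*(L(2,q'))$, viewed as an $Out(E)$-module, is supported only in the isotypic components $H_{q',q'}\oplus H_{p-1-q',2q'}$. Consequently $L(2,q')$ contributes to $Y_{i,q}$ precisely when either $i=q=q'$, or $q'=p-1-i$ together with $q\equiv -2i\pmod{p-1}$. Likewise $H^*(L(1,i'))\cong k[Y]\{y^{i'}\}$ is concentrated in $H_{i',0}$, so $L(1,i')$ lies in $Y_{i,q}$ iff $i'=i$ and $q=0$.

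With these two selection rules in hand, I would perform a case analysis on $(i,q)$ modulo $p-1$. When $i=q\neq 0$ the summand $L(2,q)$ always contributes via the first rule; the summand $L(2,p-1-i)$ contributes via the second rule exactly when $q\equiv -2i\pmod{p-1}$, i.e.\ when $3q\equiv 0$, and in that situation $p-1-q\equiv 2q\pmod{p-1}$ so the extra summand is written $L(2,2q)$. The configurations $q\equiv -2i\not\equiv 0$ with $3i\not\equiv 0$, and $q=0$ with either $2i\equiv 0$ or $2i\not\equiv 0$, are handled by the same two rules, producing $L(2,-i)$ and/or $L(1,i)$ as indicated in the statement. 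In all remaining cases no $L$-summand lands in $Y_{i,q}$, giving $Y_{i,q}=X_{i,q}$.

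The main obstacle is nothing deep but organizational: one must verify that the six listed cases are mutually exclusive and exhaustive for $1\le i\le p-2$, and consistently reconcile the indexing $p-1-i\equiv -i\pmod{p-1}$ underlying the notation $L(2,-i)$. Everything else then follows from Lemma 6.4 and the cohomological identities already recorded in Theorem 6.8, so no further cohomological computation is needed.
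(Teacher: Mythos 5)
Your proposal matches the paper's argument: the paper derives this corollary inside the (outline of) proof of Theorem 6.7, using exactly Lemma 6.4 to locate the transfers $\oplus_j Tr_{A_j}^E H^*(L(2,q'))$ inside $H_{q',q'}\oplus H_{p-1-q',2q'}$, the concentration of $L(1,i)$ in $H_{i,0}$, and the same congruence bookkeeping ($q'=-i$, $2q'\equiv q$, $3q\equiv 0$, etc.), together with the multiplicity count $\dim(2)_{q'}=(q'+1)+(p-q')=p+1$ to see that exactly one copy of $L(2,q')$ lands in each of $Y_{q',q'}$ and $Y_{p-1-q',2q'}$. Your two selection rules and case analysis are the paper's proof in all but name, so the proposal is correct and takes essentially the same route.
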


{\bf Example.}  
When $p=7$ (see $\S 9$ in [Ya2]),  we see the  cohomology   
\[ H^*(X_{0,0})\cong \bD\bA,\ H^*(X_{6,0})\cong \bD\bA\{a^3\},\
H^*(X_{4,4})\cong \bD\bA\{a^2,a^4\},\]
\[H^*(X_{2,2})\cong \bD\bA\{a,a^5\}.\]
where $a=s^2\otimes v^2\in S(2,2)$, $a^5=s^{10}\otimes v^{10}= s^{10}\otimes v^4 V \in T(2,2)$,...
and where $S(i,q)=S, T(i,q)=T$ in the preceding theorem.
Here $a^6=D_2^2$ (page 416 in [Ya2]).
Thus we see the cohomology of the exotic finite $7$-groups
(see $\S 9$ in [Ya2]) found by Ruiz and Viruel [Ru-Vi]
\[H^*(RV_3)\cong H^*(X_{0,0}\vee X_{4,4})
\cong \bD\bA\{1,a^2,a^4\},\]
\[H^*(RV_2)\cong H^*(X_{0,0}\vee X_{4,4}\vee X_{6,0}\vee X_{2,2})
\cong \bD\bA\{1,a,a^2,a^3,a^4,a^5\}\]
\[H^*(RV_1)\cong H^*(X_{0,0}\vee X_{6,0}\vee X_{4,4})
\cong \bD\bA\{1,a^2,a^3,a^4\}.\]
Therefore there does not exist even a $7$-local finite group $G$ such that
$H^*(G)\cong \bD\bA$.

\section{ $G(r,e)$ for $p\ge 5$}

For $p\ge 5$,  groups $P$ with $rank_pP=2$ are classified by Blackburn (see Thomas [Th], Dietz-Priddy [Di-Pr],
[Ya1] ).
They are metacyclic groups,  groups $C(r)$ and $G(r',e)$.
Throughout this section, we assume $p\ge 5$.

  The group $G=G(r,e), r\ge 4$ (and $e$ is $1$ or a quadratic non residue modulo $p$) is defined as
\[\la a,b,c|a^p=b^p=c^{p^{r-2}}=[b,c]=1, [a,b^{-1}]=c^{ep^{r-3}},[a,c]=b\ra.\]
The subgroup $\la a,b,c^{p}\ra$ is isomorphic to $C(r-1)$.
Hence we have the extension
\[ 1\to C(r-1)\to  G(r,e) \to \bZ/p \to 0.\]

Of course $E=C(3)\subset C(r-1)\subset G(r,e)$.
By [Ya1], we have an isomorphism
\[ H^*(G(r,e))\cong H^*(E)^{\la c\ra}.\]
Indeed, in Theorem 5.29 in [Ya1], we see $H^{even}(G;\bZ)
\cong (Y_1\oplus Y_w\oplus C')\otimes C_p'$ and
in (5.5), we see $ H^{even}(E;\bZ)^{\la c\ra}\cong (Y_1\oplus Y_w\oplus C)\otimes C_p.$
Here we can show 
\[C_p/p\cong C_p'/p,\quad C/p\cong \begin{cases} C'/p\quad for\ r=4\\
                        C'/(p,c_1) ,\quad ( c_1: nilpotent)\  for\ r\ge 5.
\end{cases}\]
The invariant ring $H^*(C(3))^{\la c\ra}$  is multiplicatively generated by 
\[y_1,\ C,\ v,\  y_2^iw\quad where\ w=y_2^p-y_1^{p-1}y_2,\quad 0\le i\le p-3\]
since $c^*: y_2\mapsto y_2+y_1$ and   $C^2=Y_1^{2}+y_2^{p-2}w$.      Hence we have
\begin{lemma}  We have an isomorphism
\[(1)\quad H^*(G(r,e))\cong (k[y_1]\oplus k[y_2]\{w\}\oplus k\{C\})\otimes k[v]\]
where the multiplications are given by $y_1w=0$, $Cy_1=y_1^p$, $w^2=y_2^pw$ \\
and $Cw=y_2^{p-1}w$,
Thus we also have the isomorphism
\[(2)\ \ H^*(G(r,e))\cong \bC\bA(\oplus_{q=0}^{p-2}(k\{1,y_1,...,y_1^{p-1}\}\{v^q\}\oplus k\{1,y_2,...,y_2^{p-3}\}\{wv^q\}).\]
\end{lemma}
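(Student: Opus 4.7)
The plan is to take the isomorphism $H^*(G(r,e))\cong H^*(E)^{\la c\ra}$ (recalled just before the lemma from [Ya1, Thm.~5.29 and (5.5)]) as a starting point and compute the right-hand side directly. First I would pin down the action: using the commutator relations $[a,c]=b$, $[b,c]=1$ of $G(r,e)$ together with the Chern-class definitions in \S6, conjugation by $c$ induces
\[c^*(y_1)=y_1,\quad c^*(y_2)=y_2+y_1,\quad c^*(C)=C,\quad c^*(v)=v.\]
Writing $H^*(E)\cong (R\oplus k\{C\})\otimes k[v]$ with $R=k[y_1,y_2]/(y_1^p y_2-y_1 y_2^p)$ as in (1) of \S6, the problem reduces to determining $R^{\la c\ra}$, since $y_1,C,v$ are already invariant.

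The key observation is that the orbit polynomial
\[w=\prod_{i=0}^{p-1}(y_2+iy_1)=y_2^p-y_1^{p-1}y_2\]
is $c$-invariant, and that the defining relation of $R$ is precisely $y_1w=0$. A direct consequence is that $y_2^jw$ is $c$-invariant for every $j\ge 0$:
\[c^*(y_2^jw)=(y_2+y_1)^jw=\sum_{k\ge 0}\binom{j}{k}y_2^{j-k}y_1^kw=y_2^jw,\]
each $k\ge 1$ term being annihilated by $y_1w=0$. Together with the evident invariance of $k[y_1]$, this yields $k[y_1]\oplus k[y_2]w\subseteq R^{\la c\ra}$.

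The main obstacle is proving equality. For this I would use the short exact sequence of $\la c\ra$-modules
\[0\to (w)\to R\to R/(w)\to 0.\]
Since $\mathrm{Ann}_R(w)=(y_1)$, the map $g\mapsto gw$ identifies $(w)$ with $k[y_2]$ as an $R$-module, and the $c$-action on $(w)$ is trivial (any $y_1$ arising from $y_2\mapsto y_2+y_1$ is killed by $y_1w=0$). The quotient $R/(w)=k[y_1,y_2]/(y_2^p-y_1^{p-1}y_2)$ is a free $k[y_1]$-module of rank $p$ on $\{1,y_2,\dots,y_2^{p-1}\}$, on which $c^*$ acts by a unipotent upper-triangular Pascal matrix; a direct kernel calculation (equivalently, the observation that after inverting $y_1$ the ring splits as a $c$-cyclically permuted product of $p$ copies of $k[y_1,y_1^{-1}]$) gives $(R/(w))^{\la c\ra}=k[y_1]$. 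Since each $y_1^n$ visibly lifts to $R^{\la c\ra}$, the connecting map in the long exact sequence of invariants vanishes and we conclude $R^{\la c\ra}=k[y_1]\oplus k[y_2]w$; a Poincar\'e series check $\frac{1+t^{2p}}{1-t^2}$ on both sides provides independent confirmation.

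The multiplicative relations are short computations in $H^*(E)$: $y_1w=0$ is the defining relation of $R$; $w^2=(y_2^p-y_1^{p-1}y_2)w=y_2^pw$ using $y_1w=0$; $Cy_1=y_1^p$ already holds in $H^*(E)$; and $Cw=Cy_2^p-y_1^{p-1}Cy_2=y_2^{2p-1}-y_1^{p-1}y_2^p=y_2^{p-1}w$ using $Cy_2=y_2^p$. Combining with the invariant summand $k\{C\}\otimes k[v]$ yields~(1). Finally, (2) is obtained by reinterpreting (1) as a graded $\bC\bA=k[C,V]$-module: the identities $Cy_1^j=y_1^{j+p-1}$ (for $j\ge 1$) and $Vv^q=v^{q+p-1}$ organise $k[y_1]\otimes k[v]$ as a free $\bC\bA$-module on $\{y_1^j v^q:0\le j\le p-1,\ 0\le q\le p-2\}$, and $Cw=y_2^{p-1}w$ similarly organises $k[y_2]w\otimes k[v]$ as a free $\bC\bA$-module on $\{y_2^j w v^q:0\le j\le p-3,\ 0\le q\le p-2\}$; a Poincar\'e series match confirms~(2).
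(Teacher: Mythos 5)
Your treatment of part (1) is correct, and it is in fact more complete than the paper's own argument: the paper simply quotes [Ya1] for the identification $H^*(G(r,e))\cong H^*(E)^{\la c\ra}$ and then asserts the generators $y_1, C, v, y_2^iw$ of the invariant ring with a one-line justification. Your computation of $R^{\la c\ra}$ for $R=k[y_1,y_2]/(y_1w)$ via the exact sequence $0\to (w)\to R\to R/(w)\to 0$, the identification $\mathrm{Ann}_R(w)=(y_1)$, and the Pascal-matrix kernel computation on $R/(w)=k[y_1,y_2]/(y_2^p-y_1^{p-1}y_2)$ are all sound, as are the multiplicative relations.

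However, your derivation of (2) contains a step that fails. You claim that $k[y_1]\otimes k[v]$ is a free $\bC\bA$-module on $\{y_1^jv^q: 0\le j\le p-1,\ 0\le q\le p-2\}$ and that $k[y_2]\{w\}\otimes k[v]$ is free on $\{y_2^jwv^q: 0\le j\le p-3,\ 0\le q\le p-2\}$. Neither summand is even a $\bC\bA$-submodule: $C\cdot 1=C$ lies in the $k\{C\}\otimes k[v]$ summand, and $C^2=y_1^{2p-2}+y_2^{p-2}w$ (from $C^2=y_1^{2p-2}+y_2^{2p-2}-y_1^{p-1}y_2^{p-1}$ and $y_2^{2p-2}-y_1^{p-1}y_2^{p-1}=y_2^{p-2}w$) mixes the two polynomial summands. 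The claimed identifications also fail additively, so the Poincar\'e series check you invoke would actually expose the problem rather than confirm it: in degree $2(p-1)$ the free $\bC\bA$-module on $\{y_1^jv^q\}$ is two-dimensional (spanned by $y_1^{p-1}$ and $C\cdot 1$), whereas $k[y_1]\otimes k[v]$ is one-dimensional there; globally,
\[\frac{1}{(1-t^{2(p-1)})(1-t^{2})}\ \ne\ \frac{1}{(1-t^{2})(1-t^{2p})}.\]
The statement (2) is nevertheless true, but only when the three summands of (1) are handled simultaneously: $C\cdot 1=C$ accounts for the $k\{C\}\otimes k[v]$ summand, $C^n\cdot 1=y_1^{n(p-1)}+y_2^{(n-1)(p-1)-1}w$ for $n\ge 2$ ties the other two together, and one checks that the monomials $y_1^av^b$, $y_2^awv^b$, $Cv^b$ are obtained from the claimed generators by a triangular change of basis over $\bC\bA$; the \emph{total} Poincar\'e series on both sides is $\frac{1+t^{2p-2}}{(1-t^{2})(1-t^{2p})}$, which then gives freeness. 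With this repair your argument goes through.
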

Here we note that
\[ H^*(E)^{\la c\ra}\cap \oplus _{i=0}^{p-2}S(A)^i\cong k\{1,y_1,...,y_1^{p-2}\},\]
\[ H^*(E)^{\la c\ra}\cap \oplus_{i=0}^{p-2}T(A)^i\cong k\{y_1^{p-1}\}\oplus k\{1,y_2,...,y_2^{p-3}\}\{w\}.\]
Let us write $w_{i+1}=y_2^iw$ (so $w_1=w$) and 
\[ S(G)=k\{1,y_1,...,y_1^{p-2}\},\quad
T(G)=k\{y_1^{p-1},w_1,..., w_{p-2}\}\]
so that 
$H^*(G(r,e))\cong \bC\bA\otimes ( \oplus _{i}(S(G)\oplus T(G))\{v^i\}).$

For groups $G'=G(r,e), E,...$, let us write by $Y_{i,j}(G')$ (and $X_{i,j}(G')$) the decomposition
component for $BG'$.  Then  from Corollary 6.2, we have
\begin{lemma}  We have additively
\[ H^*(G(r,e))\cong  \oplus _{i,q}H^*(Y_{i,q}(E))
\qquad with \]
\[ H^*(Y_{i,q}(E))\cong   \begin{cases} \bC\bA\otimes v^q\qquad for\ i=0,\\
            \bC\bA\otimes k\{y_1^{p-1}\otimes v^q\}
\quad for \ i=p-1,\\
           \bC\bA\otimes(k\{y_1^i\otimes v^q,
w_{p-1-i}\otimes v^{i+q}\})\quad otherwise
\end{cases} \]
where $0\le i\le p-1$ and $0\le q\le p-2$.
\end{lemma}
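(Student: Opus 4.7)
The plan is to refine the explicit description of $H^*(G(r,e)) \cong H^*(E)^{\langle c\rangle}$ from Lemma 7.1 by regrouping its generators according to the $Out(E)$-isotypic decomposition of Corollary 6.2. The conjugation action of $c$ on $H^*(E)$ factors through an element of $Out(E)$ (sending $y_2 \mapsto y_2 + y_1$ and fixing $y_1$, $C$, and $v$), so it respects the $Out(E)$-isotypic structure. For each $(i,q)$, the isotype $H_{i,q}$ is built from at most two copies of the simple module $S(A)^i \otimes \det^q$: one copy from $S(A)^i \otimes v^q$ and, when $0 < i < p-1$, a second from $T(A)^{p-1-i} \otimes v^{i+q}$. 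Thus
\[H^*(G(r,e)) \cong H^*(E)^{\langle c\rangle} \cong \bigoplus_{i,q} H_{i,q}^{\langle c\rangle},\]
and it suffices to compute each piece.

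Since $\bC\bA = k[C, V]$ is $Out(E)$-invariant and $c$ acts unipotently (with a single Jordan block) on each simple $Out(E)$-module $S(A)^j \otimes \det^{\ell}$, the $\langle c\rangle$-fixed subspace of each simple copy is one-dimensional. In the $S(A)^i \otimes v^q$ copy the invariant is $y_1^i \otimes v^q$ (the highest $y_1$-power), which already covers the cases $i = 0$ and $i = p-1$. In the $T(A)^{p-1-i} \otimes v^{i+q}$ copy (present when $0 < i < p-1$) I claim the invariant is $w_{p-1-i} \otimes v^{i+q}$, where $w_j := y_2^{j-1} w$ as in Lemma 7.1.

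The main technical step is the $c$-invariance of $w_j$: although $y_2^{j-1}$ is not itself fixed, the identity $y_1 w = y_1 y_2^p - y_1^p y_2 = d_2 = 0$ in $H^*(E)$ kills every cross term in the binomial expansion $c^*(y_2^{j-1} w) = (y_2 + y_1)^{j-1} w$, leaving only $y_2^{j-1} w$. Assembling these invariants with $\bC\bA$-coefficients produces the formula for $H^*(Y_{i,q}(E))$ stated in the lemma. Finally, to confirm that the pieces give an additive direct-sum decomposition of $H^*(G(r,e))$, I count free $\bC\bA$-generators: the right-hand side contributes $2(p-1) + 2(p-2)(p-1) = 2(p-1)^2$ generators, matching the $(p + (p-2))(p-1) = 2(p-1)^2$ generators visible in formula (2) of Lemma 7.1 once powers of $V = v^{p-1}$ are absorbed into $\bC\bA$.
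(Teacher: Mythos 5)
Your argument is correct and takes essentially the same route as the paper: the paper likewise obtains the lemma by intersecting the $Out(E)$-isotypic decomposition of Corollary 6.2 with the $\langle c\rangle$-invariants, recording the surviving generators $y_1^i\otimes v^q$ and $w_{p-1-i}\otimes v^{i+q}$. The paper merely asserts the two intersection formulas for $\oplus S(A)^i$ and $\oplus T(A)^i$, whereas you supply the justification (single unipotent Jordan blocks, and the relation $y_1w=d_2=0$ forcing $c$-invariance of $y_2^{j-1}w$) together with a consistency count of $\bC\bA$-generators.
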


The outer automorphism  is $Out(P)\cong (p-group):(\bZ/2\times \bZ/(p-1))$ (see [Di-Pr] for details).  Here the action
$i\in \bZ/2$ induces $i:a\mapsto a^{-1}$ and $k\in \bZ/(p-1)$ induces $k:c\mapsto c^{k}$.  Hence
\[ i^*:\begin{cases}y_1\mapsto -y_1\\
                      y_2\mapsto -y_2,
\end{cases}\quad k^*:
\begin{cases}v\mapsto kv\\
y_2\mapsto ky_2.
\end{cases}\]
All simple $\bZ/2\times \bZ/(p-1)$-modules  are represented as
\[ k\{v^i\},\quad k\{y_1v^i\}\quad 0\le i\le p-2.\]
Using this and Lemma 7.2 (2),  we get 
\begin{lemma}  Let $P=G(r,e)$ with $r\ge 4$.  Then we have $Out(P)$-module decomposition                          
\[H_{i,q}\leftrightarrow  H^*(Y_{i,q}(P))\cong \begin{cases}
\oplus _{j=even}H^*(Y_{j,q}(E)) \quad if \ i=0\\
\oplus _{j=odd}H^*(Y_{j,q}(E)) \quad if \ i=1\end{cases}\]
where $0\le i\le1$, $0\le j\le p-1$ and $0\le q\le p-2$.
\end{lemma}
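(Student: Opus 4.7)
The plan is to identify both sides as explicit $\bC\bA$-submodules of $H^*(P)=H^*(E)^{\la c\ra}$ and match them by weight decomposition under $Out(P)$.

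First, I would fix the $Out(P)$-action. From the formulas displayed just before the lemma, the $p$-group part of $Out(P)$ acts trivially on $H^*(P)$, and the quotient $\bZ/2\times\bZ/(p-1)$ acts via $i:y_1\mapsto -y_1,\ y_2\mapsto -y_2$ and $k:v\mapsto kv,\ y_2\mapsto ky_2$. These two generators embed into $Out(E)=GL_2(\bF_p)$ as $-I$ and $\mathrm{diag}(1,k)$, with determinants $1$ and $k$; combined with $\det^{p-1}=1$ this forces $C$ and $V=v^{p-1}$ to be $Out(P)$-fixed, so $\bC\bA=k[C,V]$ lies in the invariant subring and $Out(P)$-weights of elements of $H^*(P)$ depend only on their $y_1,y_2,v$ content. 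Using $w=y_2^p-y_1^{p-1}y_2\mapsto -w$ under $\bZ/2$ and $w\mapsto kw$ under $\bZ/(p-1)$ (since $k^p=k$ in $\bF_p$), one finds that the monomial $y_1^a v^q$ has weight $((-1)^a,k^q)$ and $w_i v^q$ has weight $((-1)^i,k^{i+q})$.

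Second, I would describe the left-hand side by collecting basis elements of prescribed weight. Starting from the additive decomposition of Lemma 7.2(2), the weight $(+1,k^q)$ subspace is
\[
H^*(Y_{0,q}(P))\cong \bC\bA\otimes\Bigl(\bigoplus_{\substack{0\le a\le p-1\\ a\ \text{even}}}k\{y_1^{a}v^{q}\}\ \oplus\ \bigoplus_{\substack{1\le i\le p-2\\ i\ \text{even}}}k\{w_{i}v^{q-i}\}\Bigr),
\]
with $v$-exponents reduced modulo $p-1$, and the weight $(-1,k^q)$ subspace gives $H^*(Y_{1,q}(P))$ with the parities interchanged.

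Third, I would describe the right-hand side using Corollary 6.2 together with the intersection formulas stated between Lemma 7.1 and Lemma 7.2, namely $S(A)^j\cap H^*(P)=k\{y_1^{j}\}$ for $0\le j\le p-1$ and $T(A)^i\cap H^*(P)=k\{w_i\}$ for $1\le i\le p-2$. Substituting into the three cases of Corollary 6.2 yields
\[
H^*(Y_{0,q}(E))\cap H^*(P)=\bC\bA\cdot v^{q},\qquad H^*(Y_{p-1,q}(E))\cap H^*(P)=\bC\bA\cdot y_1^{p-1}v^{q},
\]
\[
H^*(Y_{j,q}(E))\cap H^*(P)=\bC\bA\cdot y_1^{j}v^{q}\ \oplus\ \bC\bA\cdot w_{p-1-j}v^{j+q}\qquad (1\le j\le p-2).
\]

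Finally, I would match the two descriptions by summing over $j$ of a fixed parity. The $\bZ/2$-weight of the $j$-th summand on the $E$-side is $(-1)^j$: the element $-I\in GL_2(\bF_p)$ acts on $S(A)^j\otimes\det^q$ by $(-1)^j$ (using $\det(-I)=1$), and $w_{p-1-j}v^{j+q}$ inherits the same weight since $(-1)^{p-1-j}=(-1)^j$ in odd characteristic. The $\bZ/(p-1)$-weight is $k^q$ on both summands, the second because $k^{(p-1-j)+(j+q)}=k^{p-1+q}=k^q$. Consequently, as $j$ ranges over even integers in $\{0,\dots,p-1\}$, the sum of $H^*(Y_{j,q}(E))\cap H^*(P)$ covers exactly the weight-$(+1,k^q)$ subspace described in the second step, and analogously for odd $j$. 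The step I expect to require the most care is this final bookkeeping: one must invoke the isomorphism $T(A)^{p-1-j}\cong S(A)^{j}\otimes\det^{p-1-j}$ from Lemma 4.1 together with $\det^{p-1}=1$ to confirm that both $y_1^{j}v^{q}$ and $w_{p-1-j}v^{j+q}$ live in the $S(A)^{j}\otimes\det^{q}$-isotypic component, so that the parity of $j$ genuinely tracks the $\bZ/2$-weight on the $P$-side.
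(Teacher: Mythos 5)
Your proposal is correct and follows essentially the same route as the paper: the paper states this lemma without a written proof, asserting it follows from the displayed $Out(P)$-action formulas together with the additive decomposition of Lemma 7.1(2), and your weight-space bookkeeping (weights $((-1)^a,k^q)$ for $y_1^av^q$ and $((-1)^i,k^{i+q})$ for $w_iv^q$, matched against the parity of $j$ in the $E$-side summands via $(-1)^{p-1-j}=(-1)^j$ and $k^{(p-1-j)+(j+q)}=k^q$) is exactly the verification the paper leaves to the reader.
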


(I) The  case  $P=G(r,e)$ and  $r>4$.

The stable splitting is given by Dietz-Priddy [Di-Pr]
\[ BG(r,e)\cong \vee_{i,q}X_{i,q}(G(r,e))\vee \vee_qX_{p-1,q}(C(r-1))\vee \vee_qL(1,q)\]
where $i\in \bZ/2$ and $0\le q\le p-2$.

From Theorem 6.3,  (for $r-1\ge 4$)  recall
\[   pH^*(X_{p-1,q}(C(r-1))\cong \begin{cases} \bC
\bA\otimes S(A)^{p-1}\{V\}\quad if\ q=0\\
                                                \bC\bA\otimes S(A)^{p-1}v^q \quad if \ 0<q<p-1.
\end{cases} \]
This summand induced from the following transfer.
Recall $[a,c]=b$ in $G(r,e)$ and $c^*(y_2)=y_2+y_1$ in $H^*(\la a,b,c^p\ra)=H^*(C(r-1))$.
Hence we can compute
\[Tr_{C(r-1)}^G(y_2^{p-1})|_{C(r-1)}=\sum_{j}(y_2-jy_1)^{p-1}=-y_1^{p-1}
\]
which implies that $Tr_{C(r-1)}^G(y_2^{p-1})=-y_1^{p-1}$
since $H^*(P)\subset H^*(C(r-1))$.
Define $\Phi\in A(P,P)$ by 
\[ \Phi:P>C(r-1)\stackrel{a\leftrightarrow b}{\to}
      C(r-1)\subset P.\]
Then we have
\[\Phi(v^qy_1^{p-1})=T
r_{C(r-1)}^P(v^qy_2^{p-1})=-v^qy_1^{p-1}.\]
Hence $\Phi(H^*(Y_{p-1,q}(E)))\cong
\bC\bA\{y_1^{p-1}v^q\}\cong H^*(Y_{p-1,q}(E)).$
Thus we have
\[Y_{i,q}\cong \begin{cases}
          X_{0,0} \vee (X_{p-1,0}(C(r-1))\vee \vee_{j=ev}^{p-3} L(1,j) \quad if\ i=q=0\\
          X_{0,q}  \vee X_{p-1,q}(C(r-1)) \quad if\ i=0,\ q\not =0\\
           X_{1,0}\vee \vee_{j=odd}L(1,j)\quad if\ i=1,q=0\\
             X_{1,q}\quad if\ i=1,\ q\not =0.\\
\end{cases}\]
\begin{thm}
For $r>4$, we have
\[ H^*(X_{i,q}(G(r,e))\cong \begin{cases}
         H^*((\vee_{j=ev}^{p-3}X_{j,0}(C(r-1)))\vee L(1,0))\quad
if \ i=q=0\\
 H^*(\vee_{j=ev}^{p-3}X_{j,q}(C(r-1)))\quad
if \ i=0, q\not =0\\
H^*(\vee_{j=odd}
^{p-2}X_{j,q}(C(r-1))\quad if\ i=1
\end{cases}\]
\[ \cong \begin{cases}
\bC\bA\otimes(k\{1\} \oplus \oplus_{j=ev>0}^{p-3}k\{y_1^jV,w_{-j}v^j\})\quad if\ 
i=q=0\\
\bC\bA\otimes(\oplus_{j=ev}^{p-3}k\{y_1^jv^q,w_{-j}jv^{j+q}\})\quad if\ 
i=0,q\not =0\\
\bC\bA\otimes(\oplus_{j=odd}^{p-2}k\{y_1^jV,w_{-j}v^j\})\quad if\ 
i=1,q=0\\
\bC\bA\otimes(\oplus_{j=odd}^{p-2}k\{y_1^jv^q,w_{-j}v^{j+q}\})\quad if\ i=1, q\not=0.\end{cases}\]
where $w_{-j}=w_{p-1-j}$.
\end{thm}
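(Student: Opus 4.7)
The plan is to combine the Dietz--Priddy stable splitting of $BG(r,e)$ with the $Out(P)$-isotypic identification of Lemma 7.3, then strip off the extra wedge summands to isolate $H^\ast(X_{i,q}(G(r,e)))$.

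First I would verify the refinements of the $Out(P)$-summands $Y_{i,q}(G(r,e))$ displayed just above the theorem. The Dietz--Priddy decomposition says the only extra stable wedge summands of $BG(r,e)$ beyond the $X_{i,q}(G(r,e))$ are the transferred pieces $X_{p-1,q}(C(r-1))$ and the Swan summands $L(1,q)$, so each of them lands in one of the $Y_{i,q}(G(r,e))$. Since the $\bZ/2$-action $a\mapsto a^{-1}$ multiplies $y_1^{p-1}v^q$ by $(-1)^{p-1}=1$, the summand $X_{p-1,q}(C(r-1))$ sits in the $i=0$ isotypic part, so it belongs to $Y_{0,q}$. The transfer identity
\[Tr_{C(r-1)}^{G}(y_2^{p-1})=\sum_{j=0}^{p-1}(y_2-jy_1)^{p-1}=-y_1^{p-1},\]
obtained from the double coset formula together with $c^\ast(y_2)=y_2+y_1$ and Quillen's theorem (recall $H^\ast(G)\hookrightarrow H^\ast(C(r-1))$), confirms that this summand really is realized inside $H^\ast(Y_{0,q}(G(r,e)))$ by the element $\Phi\in A(G,G)$ defined via the exchange $a\leftrightarrow b$ on $C(r-1)$. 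The $L(1,j)$ pieces split according to the parity of $j$, going into $Y_{0,0}$ for even $j\in\{0,2,\dots,p-3\}$ and into $Y_{1,0}$ for odd $j$; they are absent when $q\neq 0$ because cyclic quotient cohomology carries trivial $v$-weight.

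Next I would compute each irreducible piece via
\[H^\ast(X_{i,q}(G(r,e)))\cong H^\ast(Y_{i,q}(G(r,e)))\ominus H^\ast(X_{p-1,q}(C(r-1))\text{-part})\ominus H^\ast(L(1,\cdot)\text{-part}).\]
By Lemma 7.3, $H^\ast(Y_{i,q}(G(r,e)))$ is a sum of $H^\ast(Y_{j,q}(E))$ for $j$ of the prescribed parity, which by Corollary 6.2 equals $\bC\bA$ tensored with the $\la c\ra$-invariants of $S(A)^j\otimes v^q\op T(A)^{p-1-j}\otimes v^{j+q}$; these invariants are spanned by $y_1^j$ and by $w_{p-1-j}$ respectively. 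Subtracting the explicit modules for $X_{p-1,q}(C(r-1))$ from Theorem 6.3 and, when $q=0$, the cyclic pieces $H^\ast(L(1,j))\cong k[Y_2]\{y_2^j\}$, yields the second (explicit $\bC\bA$-module) formula in the theorem. Comparing the result back against Theorem 6.3 term by term then recognizes the outcome as the cohomology of the stated wedge of $X_{j,q}(C(r-1))$'s, with the extra $L(1,0)$ appearing only in the $i=q=0$ case.

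The chief obstacle is the bookkeeping in the case $i=q=0$: here one must simultaneously remove $H^\ast(X_{p-1,0}(C(r-1)))$ (which carries a factor $\{V\}$ in the $\bC\bA$-module structure from Theorem 6.3) together with the cyclic pieces $H^\ast(L(1,j))$ for all even $j\in\{0,2,\dots,p-3\}$, and then match the surviving $k\{1\}$-summand and the extra $L(1,0)$ on the wedge side against the $y_1^j V$ and $w_{-j}v^j$ generators produced by Theorem 6.3. Checking these correspondences across the parity-based subcases of Theorem 6.3 ($2i\equiv 0$ versus $2i\not\equiv 0$) is the primary combinatorial step; the other three cases $(i,q)\neq (0,0)$ are then straightforward by the same procedure.
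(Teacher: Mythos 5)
Your plan follows the paper's own proof essentially verbatim: both establish the displayed $Y_{i,q}$ refinement via the transfer computation $Tr_{C(r-1)}^{G}(y_2^{p-1})=-y_1^{p-1}$, invoke Lemma 7.3 to write $H^*(Y_{i,q}(G(r,e)))$ as a parity-sorted sum of $H^*(Y_{j,q}(C(r-1)))$, and then strip off $X_{p-1,q}(C(r-1))$ and the $L(1,j)$ pieces, with the leftover $H^*(Y_{p-1,0}(C(r-1)))\ominus H^*(X_{p-1,0}(C(r-1)))\cong H^*(L(1,0))$ accounting for the extra $L(1,0)$ in the case $i=q=0$. One minor correction: since $r>4$ the input is Theorem 6.3 (where $C(r-1)$ has $r-1\ge 4$ and no $2i\equiv 0$ versus $2i\not\equiv 0$ subcases arise); the parity-based subcases you worry about belong to Theorem 6.7 for $E=C(3)$ and only enter in the $G(4,e)$ case treated separately.
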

\begin{proof}
We will prove the case $i=q=0$, and other cases are
proved similarly.
From the above isomorphism for $Y_{i,q}$, we have
\[H^*(X_{0,0})\cong H^*(Y_{0,0})\ominus H^*(X_{p-1,0}(C(r-1)))\ominus \oplus_{j=ev}^{p-3}H^*(L(1,j)).\]
Using Lemma 7.3, we see
$H^*(Y_{0,0})\cong \oplus_{j:ev}^{p-1}H^*(Y_{j,0}(C(r-1)).$
Hence  we can write $ H^*(X_{0,0})\cong A\oplus B$
with 
\[A=\oplus _{j=ev}^{p-3}H^*(Y_{j,0}(C(r-1))\ominus 
\oplus_{j=ev}^{p-3}H^*(L(1,j)),\]
\[ B=H^*(Y_{p-1,0}(C(r-1))
\ominus H^*(X_{p-1,0}(C(r-1)).\]
Here we have
\[A \cong \oplus _{j=ev}^{p-3}H^*(X_{j,0})(C(r-1))\quad and\quad B\cong H^*(L(1,0)).\]
Thus we have the first isomorphism in the theorem for $i=q=0$.

The second isomorphism follows from
\[H^*(X_{0,0})\cong A\oplus H^*(L(1,0))\cong 
 \bC\bA\{1\} \oplus \oplus _{0<j=ev}^{p-3}H^*(X_{j,0}(C(r-1)).\]
Here we used
$ H^*(X_{0,0}(C(r-1)))\oplus H^*(L(1,0))\cong \bC\bA\{1\}.$
\end{proof}


%
%
%
%
%
 
(II) $r=4$.

In this case cohomology is the same as (I).
However the stable splitting is not same as $(I)$ and it is also given by Dietz and Priddy {Di-Pr]. 
\[ BG(r,e)\cong \vee_{i,q}X_{i,q}(G(r,e))\vee \vee_qX_{p-1,q}(C(r-1))\]
\[\qquad \vee_qL(2,q)\vee \vee_qL(1,q)
\]
where $i\in \bZ/2$ and $0\le q\le p-2$.

The problems are only to see that these
$H^*(L(2,q))$ go to  what $H^*(Y_{i,q'})$.
Let us consider $\Phi\in A(P,P)$ such that
\[\Phi : P>E>\la a,c^p\ra \stackrel{a\leftrightarrow c^p}{\to}
\la a,c^p\ra \subset P.\]
Then we can compute (for $d_2=y_1v$)
\[ \Phi(d_2^q y_1^{p-1})=Tr_E^PTr_{\la a,c^p\ra}^E(d_2^qu^{p-1})
=Tr_E^P(d_2^q(y_2^{p-1}-C))\]
\[=Tr_E^P(d_2^q(y_2^{p-1}-C))|_{E}=-d_2^qy_1^{p-1}\]
from (3) in $\S 6$ and the arguments before Lemma 7.4. 
This means $y_1^{q+p-1}v^k$ is in the image from
$H^*(L(2,q))\subset H^*(\la a,c^p\ra) $.
Note if $q$ is even, then $y_1^{k+p-1}v^q\in H^*(Y_{0,q})$,
otherwise it is in $H^*(Y_{1,q})$.
Hence we see 
\[ Tr_{\la a,c^p\ra}^PH^*(L(2,q))\subset  \begin{cases}  H^*(Y_{0,q})\quad q=even\\
                           H^*(Y_{1,q})\quad q=odd.
\end{cases}\]

In particular, note that 
\[D_2y_1^{p-1}\in H^*(Y_{p-1,0}(E))\subset H^*(Y_{0,0}(P))\]
 is in the image from $H^*(L(2,0))$.
However note  $D_2C\in H^*(Y_{0,0}(E))\subset H^*(Y_{0,0}(P))$ is not in the image from $H^*(L(2,0))$.
 while it is so in $H^*(Y_{0,0}(E))$. 

Thus we have
\[Y_{i,q}\cong \begin{cases}
          X_{0,0} \vee (X_{p-1,0}(E)\vee L(2,0)\vee L(1,0))\vee \vee_{j=ev>0}^{p-3} L(1,j)\\
 \quad \qquad \qquad if\ i=q=0\\
          X_{0,q}  \vee X_{p-1,q}(E) \vee L(2,q)\quad if\ i=0,\ q=ev\not =0\\
  X_{0,q}  \vee X_{p-1,q}(E) \quad  if\ i=0,\ q=odd\not =0\\           X_{1,0}\vee \vee_{j=odd}L(1,j)\quad if\ i=1,q=0\\
             X_{1,q}\quad if\ i=1,\ q=ev\not =0.\\
           X_{1,q}\vee L(2,q)\quad if\ i=1,\ q=odd\not =0.\\
\end{cases}\]
Then we have 
\begin{thm}
For $P=G(4,e)$, the cohomology $H^*(X_{i,q})$ is isomorphic to
\[ \cong \begin{cases}
\bC\bA\otimes(k\{1\} \oplus \oplus_{0<j=ev}^{p-3}k\{y_1^jV,w_{-j}v^j\})\quad if\ 
i=q=0\\
\bC\bA\otimes(\oplus_{0<j=ev\not =q}^{p-3}k\{y_1^jv^q,w_{-j}v^{j+q}\})\\
\qquad\oplus \bD\bA\{y_1^qv^q\}\oplus \bC\bA\{w_{-q}v^{2q}\} 
\quad if\ i=0,q=even \not =0\\
\bC\bA\otimes(\oplus_{j=ev}^{p-3}k\{y_1^jv^q,w_{-j}jv^{j+q}\})\quad if\ 
i=0,q=odd \not =0\\
\bC\bA\otimes(\oplus_{j=odd}^{p-2}k\{y_1^jV,w_{-j}v^j\})\quad if\ 
i=1,q=0\\
\bC\bA\otimes(\oplus_{j=odd}^{p-2}k\{y_1^jv^q,w_{-j}v^{j+q}\})\quad if\ i=1, q=even\not=0\\
\bC\bA\otimes(\oplus_{j=odd\not =q}^{p-3}k\{y_1^jv^q,w_{-j}v^{j+q}\})\\
\qquad\oplus \bD\bA\{y_1^qv^q\}\oplus \bC\bA\{w_{-q}v^{2q}\} 
\quad if\ i=1,q=odd \not =0.
\end{cases}\]
\end{thm}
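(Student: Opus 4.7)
The plan is to mirror the derivation of Theorem 7.4, starting from the stable splitting of $Y_{i,q}$ for $P=G(4,e)$ displayed just above the theorem. For each case I will compute
\[ H^*(X_{i,q}) \cong H^*(Y_{i,q}) \ominus (\text{cohomologies of the remaining summands of } Y_{i,q}), \]
reading $H^*(Y_{i,q})$ off Lemma 7.3, $H^*(X_{p-1,q}(E))$ off Theorem 6.6, $H^*(L(2,q))$ off Theorem 4.4 together with Lemma 6.4, and $H^*(L(1,q)) \cong k[Y]\{y^q\}$ from Section 4.

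For the cases in which $L(2,q)$ does not appear in the decomposition of $Y_{i,q}$---namely $i=0$ with $q$ odd, $i=1$ with $q=0$, and $i=1$ with $q$ even $\neq 0$---the computation runs just as in the corresponding cases of Theorem 7.4: Lemma 7.3 presents $H^*(Y_{i,q})$ as a direct sum of free $\bC\bA$-modules indexed by the monomials $y_1^j v^q$ and $w_{-j} v^{j+q}$, from which $H^*(X_{p-1,q}(E))$ (and, when $i=1$ and $q=0$, the odd $L(1,j)$'s) are subtracted, leaving the answer stated in the theorem.

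The new work is in the cases where $L(2,q)$ does appear, namely $i=q=0$, $i=0$ with $q$ even $\neq 0$, and $i=1$ with $q$ odd. Here I must locate the subspace $H^*(L(2,q))$ inside $H^*(Y_{i,q})$. The calculation $Tr^P_{\la a,c^p\ra}(d_2^q u^{p-1}) = -d_2^q y_1^{p-1}$ displayed just before the theorem shows that this image is concentrated in the summand generated by $y_1^q v^q = d_2^q$, and identifies it with a copy of $\bC\bB\{Cd_2^q\}$ (the element $C$ on $E$ playing the role of $Y = y^{p-1}$ on the elementary abelian). Removing this from the corresponding $\bC\bA\{y_1^q v^q\}$ piece and applying the identity $\bC\bA \ominus \bC\bB\{C\} \cong \bD\bA$ from the proof of Theorem 6.6 converts it into $\bD\bA\{y_1^q v^q\}$, giving the Dickson-algebra summand in the theorem; the complementary $\bC\bA\{w_{-q}v^{2q}\}$ term is the $w$-companion of $y_1^q v^q$ inside $H^*(Y_{q,q}(E))$ (see Lemma 7.3), which is not hit by the transfer and so survives intact.

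The main obstacle I expect is the bookkeeping of isotypical components. For each pair $(i, q \bmod 2)$ I must keep track of how the $Out(E)$-isotypical pieces $H_{j,q}(E)$ of the invariant ring $H^*(E)^{\la c\ra}$ reassemble into the coarser $Out(G)$-isotypical pieces of Lemma 7.3, and then verify that the transfer image $H^*(L(2,q))$ falls into exactly the parity class claimed in the splitting of $Y_{i,q}$. Since $d_2^q y_1^{p-1} = y_1^{p-1+q} v^q$ and $p-1$ is even, the parity of the $y_1$-exponent equals that of $q$, correctly placing $L(2,q)$ in $Y_{0,q}$ when $q$ is even and in $Y_{1,q}$ when $q$ is odd. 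Once this matching is in place, every remaining identification reduces to the same $\bC\bA / \bC\bB / \bD\bA$ arithmetic used throughout Sections 4--6.
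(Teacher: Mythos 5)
Your proposal is correct and follows essentially the same route as the paper: both rest on the displayed splitting of $Y_{i,q}(G(4,e))$, the transfer computation $\Phi(d_2^qy_1^{p-1})=-d_2^qy_1^{p-1}$ locating $H^*(L(2,q))\cong \bC\bB\{Cd_2^q\}$ inside the parity class of $q$, and the identity $\bC\bA\ominus\bC\bB\{C\}\cong\bD\bA$. The only organizational difference is that the paper shortcuts by writing $H^*(X_{i,q}(G(4,e)))\cong H^*(X_{i,q}(G(r,e)))\ominus H^*(L(2,q))$ and citing Theorem 7.4, whereas you inline that computation by subtracting all non-dominant summands from $H^*(Y_{i,q})$ directly; the arithmetic is identical.
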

\begin{proof}
When $i=q=0$,  the isomorphism follows from
\[H^*(X_{0,0}(G(4,e))\cong H^*(X_{0,0}(G(r,e)))\quad for 
\ r>4.\]
This fact is shown from the decomposition $Y_{0,0}$ above
and
\[ H^*(X_{p-1,0}(C(r-1))\cong H^*(X_{p-1,0}(C(3))\vee L(2,0))\quad for\  r>4.\]
When $i=0,q=ev\not=0$, the fact
\[ H^*(X_{0,q}(G(4,e))\cong H^*(X_{0,q}(G(r,e'))\ominus
H^*(L(2,q))\]
implies the isomorphism in the theorem.
The other cases are also seen similarly.
\end{proof}

\section{Relations among $BP$ with $rank_pP=2$.}

In this section, we see Theorem 1.1 in the introduction.
For a group $G$ with $rank_pG=2$, let us write by
$X_{i,q}(G)$ (or $X_i(G)$ for a metacyclic group)  the corresponding irreducible stable homotopy summand.

Recall that a non-dominant summand $X$ is the irreducible summand
corresponding to an $A(G,G)$-simple module $S(G,Q,V)$
for a proper subgroup $Q$ and a simple $Out(Q)$-module
$V$. From Dietz-Priddy, the following lemma is immediate.
\begin{lemma}  Let $G=C(r)$ (or $G(r+1,e))$ for $r\ge 3$.
Then  for $0\le q\le p-2$, a non-dominant summand is
$L(1,q),$ $L(2,q)$ (or $X_{p-1,q}(C(r))$ for $G=G(r+1,e)$).
\end{lemma}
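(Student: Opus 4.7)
The plan is to extract the non-dominant summands directly from the stable splittings of Dietz and Dietz--Priddy recorded in Sections 6 and 7, and verify case-by-case that the list in the lemma is complete. From the discussion after Theorem 2.2, an irreducible summand $X_S$ of $BG$ is non-dominant iff $S=S(G,Q,V)$ for a proper subgroup $Q<G$; equivalently, iff its idempotent $e_S\in A(G,G)$ factors through $BQ$.

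For $G=C(r)$, Section 6 gives
\[BP\cong \bigvee_{i,q}(i+1)X_{i,q}\vee \bigvee_{q=0}^{p-2}(q+1)L(1,q)\vee pL(1,p-1)\]
for $r\ge 4$, with the additional family $(p+1)L(2,q)$ when $r=3$. The $X_{i,q}$ are $X_{S(P,P,V)}$, hence dominant. By construction $L(1,q)$ factors through $B\la b\ra\cong B\bZ/p$ (a proper subgroup), while $L(2,q)$ (only when $r=3$) is the transfer image from the maximal elementary abelian $A<E$; in each case the idempotent factors through a proper subgroup, so the summand is non-dominant.

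For $G=G(r+1,e)$, the splittings in Section 7 read
\[BG\cong \bigvee_{i,q}X_{i,q}(G)\vee \bigvee_q X_{p-1,q}(C(r))\vee \bigvee_q L(1,q),\]
with an additional family $\bigvee_q L(2,q)$ when $r+1=4$. The $L(1,q)$ and $L(2,q)$ factors are non-dominant as above. For $X_{p-1,q}(C(r))$, the map $\Phi: G>C(r)\stackrel{a\lrarr b}{\to} C(r)\subset G$ constructed in Section 7 exhibits its defining idempotent as factoring through $BC(r)$, a proper subgroup of $G(r+1,e)$; hence $S\cong S(G,C(r),V)$ with $V\cong S(A)^{p-1}\otimes\det^q$, and the summand is non-dominant.

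The main (modest) obstacle is verifying that the copies of $X_{p-1,q}(C(r))$ sitting inside $BG(r+1,e)$ are genuinely represented by simple $A(G,G)$-modules $S(G,C(r),V)$, rather than coinciding accidentally with some dominant $S(G,G,V')$. This is secured by the explicit transfer computation $\Tr_{C(r)}^G(y_2^{p-1})=-y_1^{p-1}$ recalled in Section 7: it both shows the summand is non-zero in $H^*(G)$ and confirms that its idempotent lies in the image of $A(G,C(r))\cdot A(C(r),G)\subset A(G,G)$. Once this is established, the Dietz--Priddy catalogue together with Theorem 2.2 forces the list $L(1,q)$, $L(2,q)$, $X_{p-1,q}(C(r))$ to exhaust the non-dominant summands.
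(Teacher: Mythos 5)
Your proposal is correct and follows essentially the same route as the paper, which simply declares the lemma ``immediate from Dietz--Priddy'': one reads off the splittings recorded in \S 6--\S 7 and observes that $L(1,q)$, $L(2,q)$ and $X_{p-1,q}(C(r))$ are exactly the summands indexed by simple modules $S(G,Q,V)$ with $Q$ a proper subgroup ($\la b\ra$, a maximal elementary abelian, or $C(r)$ respectively), while every $X_{i,q}(G)$ is of the form $X_{S(G,G,V)}$ and hence dominant. Your additional check via the transfer computation $Tr_{C(r)}^{G}(y_2^{p-1})=-y_1^{p-1}$ is a sensible confirmation but not needed beyond what the Dietz--Priddy catalogue already guarantees.
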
 

Let us use the notation such that for stable homotopy spaces
$A,B$, the notation
$A\cong _HB$ means $H^*(A)\cong H^*(B)$ as graded
modules.
Theorem 1.1 in the introduction is a immediate consequence
of the above lemma and the following theorem about dominant summands.
\begin{thm}  Let $G=C(r)$ (or $G(r+1,e)$) for $r\ge 3$.
Given $0\le i\le p-1$ (or $i=0\ or\ 1$) and $0\le q\le p-2$,
there are $a_{j},b_k,c$  which are $0$ or $1$ such that  we have the isomorphism
\[ X_{i,q}(G)\cong_H 
\vee_{j=0}^{p-1} a_{j}X _{j,q}(E)\vee
\vee_{k=0}^{p-2} b_{k}L(2,k)\vee cL(1,0)\quad (*)\]
  In particular,
$c=1$ if and only if  $i=q=0$ and  $G=G(r+1,e)$.
\end{thm}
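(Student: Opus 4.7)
The plan is to prove $(*)$ by direct comparison of the explicit cohomology formulas in Sections 6 and 7, using Lemma 8.1 to restrict attention to dominant summands. The essential building block is the identity from Lemma 5.4, $\bC\bA \cong \bD\bA \oplus \bC\bB\{Y\}$, together with $\bC\bB\{Y\} \cong H^*(L(2,0) \vee L(1,0))$ and $\bD\bA \cong H^*(X_{0,0}(E))$ from Theorem 6.6. This gives the master decomposition $\bC\bA \cong_H X_{0,0}(E) \vee L(2,0) \vee L(1,0)$. More general tensor products $\bC\bA \otimes S(A)^i\{v^q\}$ or $\bC\bA \otimes T(A)^{p-1-i}\{v^{i+q}\}$ that appear as factors in Theorems 6.3, 7.4, 7.5 decompose analogously, matching the pieces of $H^*(X_{j,q}(E))$ given by Theorem 6.8 together with the appropriate $L(2,k)$'s from Lemma 6.4.

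First I would dispose of the $C(r)$ case with $r \ge 4$. By Theorem 6.3, each $H^*(X_{i,q}(C(r)))$ is a sum of terms $\bC\bA \otimes (S(A)^i \otimes v^q)$ and $\bC\bA \otimes (T(A)^{p-1-i} \otimes v^{i+q})$. Comparing with the formulas for $H^*(X_{i,q}(E))$ in Theorem 6.8 and with the transfer-induced $L(2,k)$ contributions in Lemma 6.4, one reads off the coefficients $a_j$ and $b_k$ case by case; the identity $\bC\bA \cong \bD\bA \oplus H^*(L(2,0) \vee L(1,0))$ (and its twists by $S(A)^i$, $T(A)^j$) is the algebraic engine. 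Crucially $L(1,0)$ is never needed to fill a gap here, so $c = 0$ throughout the $C(r)$ case.

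Next, for $G = G(r+1,e)$ with $r \ge 4$, Theorem 7.4 already expresses $H^*(X_{i,q}(G))$ as a direct sum of $H^*(X_{j,q}(C(r)))$'s, plus one extra $\bC\bA\{1\}$ summand that occurs precisely when $i = q = 0$. Applying the previous step to each $X_{j,q}(C(r))$-factor, and unwinding the lone $\bC\bA\{1\}$ via the master decomposition into $X_{0,0}(E) \vee L(2,0) \vee L(1,0)$, yields the claimed $(*)$. The $c=1$ assertion is exactly this: $L(1,0)$ is contributed solely by the $\bC\bA\{1\}$ summand, which appears only for $(i,q)=(0,0)$ in Theorem 7.4. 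The case $G = G(4,e)$ is handled analogously using Theorem 7.5 in place of Theorem 7.4, with the additional $L(2,q)$ subtractions tracked carefully against Lemma 6.4.

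The main obstacle is logistical rather than conceptual: there are many case distinctions indexed by $(G,i,q)$ and by the congruences $3q\equiv 0$, $2i \equiv 0$ (mod $p-1$), etc., appearing in Theorem 6.8. The nontrivial point is verifying that $a_j, b_k, c \in \{0,1\}$ in every case, i.e.\ no irreducible $E$-summand or $L(2,k)$ is ever needed with multiplicity $\ge 2$. This reduces to checking that each monomial basis element of the cohomology formula for $X_{i,q}(G)$ lies in the cohomology of a unique irreducible summand listed on the right of $(*)$. The $c=1$ uniqueness part then follows from the observation that the characteristic $k[Y]$-piece of $H^*(L(1,0))$ can only be produced by the $\bC\bA\{1\}$ factor, which by inspection of Theorems 6.3, 7.4, 7.5 is present only for $X_{0,0}$ of $G(r+1,e)$.
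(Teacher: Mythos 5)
Your overall strategy---reduce to the dominant summands via Lemma 8.1, use Theorem 7.4 (resp.\ 7.5) to express $X_{i,q}(G(r+1,e))$ through the $X_{j,q}(C(r))$, and compare $C(r)$ with $E$ by means of the identity $\bC\bA\cong \bD\bA\oplus \bC\bB\{Y\}\cong_H X_{0,0}(E)\vee L(2,0)\vee L(1,0)$---is essentially the paper's. The paper organizes the $C(r)$-versus-$E$ step a little differently: instead of matching the explicit formulas of Theorems 6.3 and 6.7 term by term, it uses $H^*(C(r))\cong H^*(E)$ to get $Y_{i,q}(C(r))\cong_H Y_{i,q}(E)$ and then cancels the known non-dominant pieces on each side (the displayed $Y$-decompositions in \S 6 and Corollary 6.8). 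The two computations are equivalent, and your identification of when $c=1$ (the lone $\bC\bA\{1\}$, i.e.\ $X_{0,0}(C(r))\vee L(1,0)$, occurring only for $i=q=0$ in Theorem 7.4) agrees with the paper.

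The one place where your proposal has a genuine gap is the verification that $b_k\le 1$. You correctly single this out as ``the nontrivial point,'' but reducing it to ``each monomial basis element lies in the cohomology of a unique irreducible summand'' does not meet the actual danger: for $G=G(r+1,e)$ the summand $X_{i,q}(G)$ absorbs all $Y_{j,q}(E)$ with $j$ of a fixed parity, and since $j$ and $p-1-j$ always have the same parity (as $p-1$ is even), two distinct $Y_{j,q}(E)$ in the same wedge could a priori each contribute a copy of the same $L(2,k)$, forcing $b_k=2$. The paper closes this by a specific combinatorial argument from Corollary 6.8: $Y_{j,q}(E)$ contains any $L(2,k)$ with multiplicity at most one and only for $k\equiv j$ or $k\equiv -j$; the case $k\equiv j$ occurs only when $j=q$; and $Y_{p-1-q,q}(E)$ does not contain $L(2,q)$, so the two potential sources of $L(2,q)$ never both occur. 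Some version of this check (or an equivalent rank count over $\bD\bA$) must be supplied before you may assert $b_k\in\{0,1\}$; the remaining case analysis is bookkeeping, as you say.
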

\begin{proof}
We first consider $G=C(r)$, for $r>3$.  Since $C(r)\cong_H
E$, we see $Y_{i,q}(C(r))\cong_H Y_{i,q}(E)$.
Let $q=0$.  Then from the formula for $Y_{i,q}$ just before Lemma 6.3, $Y_{i,0}(C(r))\cong
X_{i,0}(C(r))\vee L(1,i)$. 

 On the other hand, from  Corollary 6.8,
we see 
\[Y_{i,0}(E)\cong_H X_{i,0}(E)\vee L(1,i)\vee bL(2,-i)\quad 
for\ b=0\ or \ 1.\]
Hence $X_{i,0}(C(r))\cong _HX_{i,0}(E)\vee bL(2,-i)$, and
 $(*)$ is satisfied in this case, in particular note
$c=0$.
The case $q\not =0$ is shown  similarly by using
Corollary 6.8
\[ Y_{i,q}(C(r))\cong _HX_{i,q}(E)\vee b'L(2,i)\vee b''L(2,-i)\quad b',b''=0\ or\ 1.\]

The case $G=G(r+1,e), r>3$ is immediate from
the first isomorphism in Theorem 7.4 and the result for $C(r)$.
We also note $c=1$ if and only if $i=q=0$. 
Hence we can write 
\[ X_{i,q}(G(r+1,e))\cong_H \vee \vee_{j=0}^{p-2}a_{j}X_{j,q}(E)\vee \vee _{k=0}^{p-2} b_{k}L(2,k)\vee cL(1,0)
.\]
where $0\le a_{j}, b_k, c \le 1$. 

The fact $0\le b_{k}\le 1$ 
is shown by the following arguments. 
Note if $Y_{j,q}(E)$ contains $b'L(2,k)$, then
$b'=1$ and $k=j$ or $k=-j$ from Corollary 6.8.
Therefore $\vee_{j=0}^{p-2} Y_{j,q}(E)$ contains $bL(2,k)$ for $b\le 2$.
Suppose $b_k=2$ and $k\not =0$. Since $Y_{j,q}(E)$ contains
$L(2,j)$ only when $j=q$, we can assume $k=q$.
However $Y_{p-1-q,q}(E)$ does not contain $L(2,q)$
from Corollary 6.8. Hence $b_k\not =2$.

 Let $G=G(4,e)$.
First note that $Y_{i,q}(G(r+1,e))\cong_H Y_{i,q}(G(4,e))$.
When $i=q=0$,  the isomorphism  $(*)$
is  immediate from
$X_{0,0}(G(4,e))\cong_HX_{0,0}(G(r+1,e))$.
When $i=0,q=ev\not=0$,  we recall 
\[ H^*(X_{0,q}(G(4,e))\cong H^*(X_{0,q}(G(r+1,e))\ominus
H^*(L(2,q))\]
Here $X_{0,q}(G(r+1,e))\cong_H \vee_{j=even}^{p-3}X_{j,q}(C(r))$.  The last space contains
\[ X_{q,q}(C(r))\cong_H X_{q,q}(E)\vee L(2,q)\vee bL(2,2q)\quad for \ b=0,\ or \ 1\]
from  Corollary 6.8, which implies the isomorphism in the theorem.
The other cases are also seen similarly.
\end{proof}

{\bf Example.}
Let $p=7$ and $r>3$.  We see  from Corollary 6.8
\[X_{5,2}(C(r))\cong Y_{5,2}(C(r))\cong_H Y_{5,2}(E)\cong 
X_{5,2}(E)\vee L(2,1).\]
We have $H^*(Y_{5,2}(E))\cong \bC\bA\{y_1^5v^2,w_1v\}$ and $H^*(L(2,1))\cong \bC\bB\{u^{p-1}d_2^1\}$ which
maps to $w_1v$ by the transfer.  Hence
\[ H^*(X_{5,2}(E))\cong H^*(Y_{5,2}(E))\ominus H^*(L(2,1)
\cong  \bC\bA\{y_1^5v^2\}\oplus \bD\bA\{w_1v\}\]
by using $\bC\bA\ominus \bC\bB\{C\}\cong \bD\bA$.
 For the case $X_{6,0}(E)$, we have
$H^*(Y_{6,0}(E))\cong H^*(X_{6,0}(E))\oplus H^*(L(2,0))\vee L(1,0))$.  Hence  we see also
\[H^*(X_{6,0}(E))\cong \bC\bA\{Y\}\ominus\bC\bB\{Y\}
\cong \bD\bA\{VY\}.\]
(See also the example after Corollary 6.8.)

{\bf Example.}  We consider the case $p=7$ and $q=2$.
The cohomology $H^*(Y_{j,2})\cong \bC\bA\{y_1^jv^2,w_{-j}v^{j+2}\}$.
Therefore for $r'>4$, we see 
\[ H^*(X_{0,2}(G(r',e)))\cong H^*(\vee_{j=0,2,4}Y_{j,2})
\cong \bC\bA\{v^2,y_1^2v^2,y_1^4v^2,w_4v^4,w_2\}\]
\[ H^*(X_{0,2}(G(4,e)))\cong H^*(\vee_{j=0,2,4}Y_{j,2})\ominus
   H^*(L(2,2))\]
\[\cong \bC\bA\{v^2,y_1^4v^2,  w_2, w_4v^4\}\oplus \bD\bA\{y_1^2v^2\}\]

Next, we study split metacyclic groups, For stable spaces
$X=X_{i,j}(G)$, 
let $SX$ 
be the virtual object 
defined by 
\[H^*(SX)= H^*(X)\cap \bC\bA\otimes (\oplus_{q=0}^{p-2}k\{1,y_1,...,y_1^{p-2}\}\{v^q\})\]
where we identify  it as the submodule of
$\bC\bA\otimes(\oplus_qS(A)^*\{v^q\})\subset  H^*(E)$
in Theorem 6.1.
Then we see 
\[ H^*(S(BE))\cong \bC\bA\otimes
(\oplus_q(k\{1,y_1,...,y^{p-2}\}\{v^q\})\cong k[y,v]
\]
identifying $C=Y=y_1^{p-1}$ as graded modules.

Recall that $H^*(M(\ell,m,n))\cong k[y,v]$ with $|v|=2p^{m-\ell}$.  Let us write $M=(m-1,m,n))$ so that
\[ H^*(Y_q(M))\cong  \oplus_{j=0}^{p-2}H^*(SY_{j,q}(E)).\]
The results in $\S 5$ imply the following theorem
\begin{thm}  Let $M=M(m-1,m,n)$ and $r>3$.
 Then we have
   \[ H^*(X_q(M))\cong \begin{cases}
 \oplus_j^{p-2}H^*(SX_{j,q}(C(r))) \quad  if\  (m,n)\not=(2,1)\\
\oplus_j^{p-2}H^*(SX_{j,q}(E)) \quad if \ (m,n)=(2,1).
\end{cases}\]
\end{thm}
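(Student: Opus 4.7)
The plan is to prove Theorem~8.3 by directly comparing both sides through the explicit cohomology computations of Sections~5 and~6, organized around the observation that the projector $S$ picks out precisely the pure $y_1$-line from each $S(A)^j$-block ($0\le j\le p-2$) while annihilating every $T(A)^{p-1-j}$-contribution.

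First, I would recall from Section~5 the explicit cohomology of $X_q(M)$. For $(m,n)\ne (2,1)$, Theorem~5.2 gives $H^*(X_q(M))\cong k[y,V]\{v^q\}$ when $q\ne 0$ and $k[y,V]\{V\}$ when $q=0$. For $(m,n)=(2,1)$, Theorem~5.3 gives $H^*(X_q(M))\cong \bC\bA\{1,\dots,\hat y^q,\dots,y^{p-2}\}\{v^q\}\oplus \bD\bA\{d_2^q\}$ for $q>0$ and $\bC\bA\{y,\dots,y^{p-2}\}\{V\}\oplus \bD\bA$ for $q=0$. Using the additive decomposition $k[y]=\bigoplus_{j=0}^{p-2}k[Y]\,y^j$ together with $\bC\bA=k[Y,V]$, I would rewrite each of these as a direct sum over $j=0,\dots,p-2$ of summands $\bC\bA\{y^j v^q\}$ (or $\bC\bA\{y^j V\}$ when $q=0$), with the sole exception that for $(m,n)=(2,1)$ the diagonal index $j=q$ contributes $\bD\bA\{d_2^q\}=\bD\bA\{y^q v^q\}$ (or $\bD\bA$ at $j=q=0$) in place of the generic $\bC\bA$-piece.

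Second, I would compute $H^*(SX_{j,q}(G))$ for $G=C(r)$ with $r>3$ and for $G=E$, by intersecting the formulas of Theorems~6.3--6.7 with the defining subspace $\bC\bA\otimes\bigoplus_{q'=0}^{p-2}k\{1,y_1,\dots,y_1^{p-2}\}\{v^{q'}\}$. A direct check of bases gives $S(A)^j\cap k\{1,y_1,\dots,y_1^{p-2}\}=k\{y_1^j\}$ for $0\le j\le p-2$ and $T(A)^{p-1-j}\cap k\{1,y_1,\dots,y_1^{p-2}\}=0$, since every monomial in $T(A)^{p-1-j}$ carries a positive power of $y_2$ or equals $y_1^{p-1}$, which lies outside the allowed range. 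Feeding this into Theorem~6.3 yields $H^*(SX_{j,q}(C(r)))\cong \bC\bA\{y_1^j v^q\}$ (respectively $\bC\bA\{y_1^j V\}$ when $q=0$) for $0\le j\le p-2$ and zero for $j=p-1$. Feeding it into Theorem~6.4 and Theorem~6.7 gives the same answer for $E$, except that at the diagonal $j=q$ the coefficient algebra drops from $\bC\bA$ to $\bD\bA$ (both subcases $3q\equiv 0$ and $3q\not\equiv 0$ modulo $p-1$ produce $\bD\bA\{y_1^q v^q\}$), and $H^*(SX_{0,0}(E))=\bD\bA$.

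Third, I would sum over $j=0,\dots,p-2$ and match with the rewriting from the first step, using the identifications $y_1\leftrightarrow y$ and $C\leftrightarrow Y=y^{p-1}$ built into the definition of $S$. For $(m,n)\ne(2,1)$ every summand is of type $\bC\bA\{y_1^j v^q\}$ and the total reassembles into $k[y,V]\{v^q\}$ or $k[y,V]\{V\}$, matching Theorem~5.2. For $(m,n)=(2,1)$ the $\bD\bA$-summand on the $S$-side at $j=q$ matches the $\bD\bA\{d_2^q\}$ or $\bD\bA$ summand of Theorem~5.3, while the off-diagonal $\bC\bA$-pieces fill out the rest. The main obstacle is this second matching: one must navigate the six subcases of Theorem~6.7 and verify that the $\bD\bA$-coefficient appears on the $S$-side exactly at $j=q$ and nowhere else, while every off-diagonal index retains the generic $\bC\bA$ coefficient. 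Once this case analysis is organized, the remainder is routine graded-$k$-module bookkeeping.
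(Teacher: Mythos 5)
Your proposal is correct, but it takes a more computational route than the paper. The paper's own proof is a two-line structural argument: for $q\neq 0$ it chains the isomorphisms $H^*(Y_q(M))\cong H^*(X_q(M))\cong H^*(X_q(M(1,2,1))\vee L(2,q))$, and then invokes the identification $H^*(Y_q(M))\cong \oplus_j H^*(SY_{j,q}(E))$ stated just before the theorem together with the splittings $Y_{j,q}=X_{j,q}\vee(\text{$L$-summands})$ from Sections 5 and 6; the discrepancy between the two cases of the theorem is thus absorbed exactly by the single $L(2,q)$ separating $M(\ell,m,n)$ from $M(1,2,1)$, and by the fact that $Y_{j,q}(C(r))$ for $r>3$ contains no $L(2,\cdot)$. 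You instead verify the final formulas directly: you compute $H^*(SX_{j,q})$ term by term from Theorems 6.3, 6.6 and 6.7 via the observation that the $S$-operation retains $k\{y_1^j\}$ from each $S(A)^j$ and kills every $T(A)^{p-1-j}$ (all of whose monomials carry a positive power of $y_2$), then match against Theorems 5.2 and 5.3. Your key check --- that after applying $S$ the coefficient ring degenerates from $\bC\bA$ to $\bD\bA$ precisely on the diagonal $j=q$ (in both subcases $3q\equiv 0$ and $3q\not\equiv 0$ of Theorem 6.7, while the case $q\equiv -2j$ puts its $\bD\bA$ only on the $T$-part, which $S$ annihilates) and that this matches the summand $\bD\bA\{d_2^q\}$ of Theorem 5.3 --- is exactly the content that the paper's terse proof leaves implicit, so your version is longer but arguably more self-contained; what you lose is the conceptual explanation of \emph{why} the two cases differ by exactly one $L(2,q)$. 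One small caveat: the intersections defining $H^*(SX_{j,q})$ must be read at the level of the associated graded modules of Theorem 6.1 (as the paper does throughout), since the case formulas of Theorem 6.7 are only filtration-level isomorphisms; with that understanding your bookkeeping goes through.
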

\begin{proof}
 The case $q\not =0$ is shown from
\[H^*(Y_q(M))\cong H^*(X_q(M))\cong H^*(X_q(M(1,2,1))\vee L(2,q)).\]  The case $q=0$ is also proved similarly.
\end{proof}

At last in this section, we consider the cases
$m-\ell>1$.  From the results in $\S 5$, 
it is almost immediate
\begin{prop} Let $m-\ell>1$.  Then we have
\[ H^*(X_i(M(\ell,m,n))\cong
H^*(X_i(M(m-1,m,n))\cap k[y,v^{p^{m-\ell-1}}].\]
\end{prop}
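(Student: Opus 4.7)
The plan is to apply Theorem 5.2 to both split metacyclic groups $M(\ell,m,n)$ and $M(m-1,m,n)$ and then verify the intersection formula by a direct graded-module computation inside $H^*(M(m-1,m,n)) = k[y,v]$, viewing $H^*(M(\ell,m,n)) = k[y,v_\ell]$ as the subring $k[y, v^{p^{m-\ell-1}}]$ via the degree-forced identification $v_\ell \leftrightarrow v^{p^{m-\ell-1}}$ (both classes have degree $2p^{m-\ell}$).

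First I would invoke Theorem 5.2 in both cases. Since $m-\ell>1$ implies $(\ell,m,n)\neq (1,2,1)$, both groups satisfy the hypotheses of that theorem, so
\[ H^*(X_i(M(\ell,m,n))) \cong \begin{cases} k[y, V_\ell]\{v_\ell^i\}, & i\neq 0,\\ k[y, V_\ell]\{V_\ell\}, & i=0,\end{cases}\]
with $V_\ell = v_\ell^{p-1}$, and analogously for $M(m-1,m,n)$ with $V = v^{p-1}$ of degree $2p(p-1)$.

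Next I would compute the intersection $k[y,V]\{v^i\} \cap k[y, v^{p^{m-\ell-1}}]$ directly inside $k[y,v]$. The monomials $v^{i+(p-1)r}$ ($r\ge 0$) in the first module lie in the second subring iff $p^{m-\ell-1}$ divides $i+(p-1)r$; since $p^{m-\ell-1}\equiv 1 \pmod{p-1}$ and $\gcd(p,p-1)=1$, the Chinese Remainder Theorem forces these exponents to be precisely $p^{m-\ell-1}(i+(p-1)s)$ for $s\ge 0$, i.e.\ $v_\ell^{i+(p-1)s}$ after the identification. Summing recovers $k[y, V_\ell]\{v_\ell^i\}$, and the case $i=0$ runs in parallel: the admissible $v$-exponents become $(p-1)p^{m-\ell-1}s$ with $s\ge 1$, recovering $k[y,V_\ell]\{V_\ell\}$.

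The main obstacle is justifying the identification $v_\ell \leftrightarrow v^{p^{m-\ell-1}}$ canonically, since no natural group homomorphism between these metacyclic groups seems available to invoke Proposition 3.3. The cleanest route is through Quillen's F-isomorphism (Lemma 2.1): upon restriction to a maximal elementary abelian subgroup $A\le P$, in characteristic $p$ the Frobenius yields $(v\vert_A)^{p^{m-\ell-1}} \equiv v_\ell\vert_A$ modulo nilpotents, both sides being the expected top Chern class of the induced representation after reduction, so the two classes agree in $H^*(P) = H^*(P;\bZ)/(p,\sqrt{0})$ and the subring identification above is legitimate.
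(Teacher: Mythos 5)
Your proposal is correct and matches the paper's intent: the paper offers no written proof beyond the remark that the statement is ``almost immediate from the results in \S 5,'' and your argument --- applying Theorem 5.2 to both groups, identifying $H^*(M(\ell,m,n))\cong k[y,v_\ell]$ with the subring $k[y,v^{p^{m-\ell-1}}]$ of $k[y,v]\cong H^*(M(m-1,m,n))$ by degree, and counting admissible monomial exponents via $p^{m-\ell-1}\equiv 1\pmod{p-1}$ --- is exactly that verification. The final paragraph's worry about canonicity is more than the statement requires (it asserts only a graded-module isomorphism), so the degree-forced identification already suffices.
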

From  these results, we get 
\begin{thm}
For $p\ge 5$, let $P$ be a non-abelian $p$-group of $rank_pP=2$, and  $X_i(P)$ be an irreducible component of $BP$.  Then there are graded submodules
$ H^*{(P,j)}\subset H^*(X_j(p_+^{1+2}))$ such that
\[ H^*(X_i(P))\cong  \oplus_{j\in J(i,P)}H^*{(P,j)}\]
for some index set  $J(i,P)$.
When $P$ is not a metacyclic group, we can take
$H^*{(P,j)}=H^*(X_j(p_+^{1+2}))$.
\end{thm}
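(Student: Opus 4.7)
The plan is to divide into two cases using Blackburn's classification of rank-$2$ $p$-groups for $p\ge 5$: the non-metacyclic groups (namely $C(r)$ and $G(r+1,e)$) and the metacyclic groups, both split and non-split. The non-metacyclic case is essentially Theorem 1.1, which has already been established as a consequence of Theorem 8.2. The only thing left to observe is that the non-dominant summands $L(1,0)$ and $L(2,k)$ that appear on the right-hand side of Theorem 8.2 are themselves irreducible stable summands of $BE = Bp_+^{1+2}$ (see Lemma 8.1 applied to $E = C(3)$). Hence they can be absorbed into the index set $J(i,P)$, and we may take $H^*(P,j) = H^*(X_j(E))$ exactly, matching the final sentence of the theorem.

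For the metacyclic case, I would first handle split groups of the form $M(m-1,m,n)$, so that $m-\ell = 1$ and $|v| = 2p$ agrees with the exponent of $v$ in $H^*(E)$. Here Theorem 8.3 gives
\[ H^*(X_q(M)) \cong \bigoplus_{j=0}^{p-2} H^*(SX_{j,q}(C(r))) \quad \text{or} \quad \bigoplus_{j=0}^{p-2} H^*(SX_{j,q}(E)), \]
depending on whether $(m,n)\ne(2,1)$ or $(m,n)=(2,1)$. In the second subcase the groups $H^*(SX_{j,q}(E))$ are by construction graded submodules of $H^*(X_{j,q}(E))$, so setting $H^*(P,j) = H^*(SX_{j,q}(E))$ is immediate. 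In the first subcase, Theorem 8.2 already writes $H^*(X_{j,q}(C(r)))$ as a direct sum of the full cohomologies $H^*(X_k(E))$ (with the non-dominant summands absorbed as in the preceding paragraph); intersecting each term with the defining submodule $\bC\bA \otimes (\bigoplus_q k\{1,y_1,\ldots,y_1^{p-2}\}\{v^q\})$ of $SX$ produces the desired graded submodules of $H^*(X_k(E))$. For non-split metacyclic groups the situation is even easier since $BP$ is itself irreducible by Dietz, so $H^*(P,j) = H^*(X_j(M))$ viewed as a submodule.

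For a general metacyclic $M(\ell,m,n)$ with $m-\ell > 1$, I would invoke Proposition 8.4, which states
\[ H^*(X_i(M(\ell,m,n))) \cong H^*(X_i(M(m-1,m,n))) \cap k[y, v^{p^{m-\ell-1}}]. \]
Composing this intersection with the submodule description obtained in the previous step yields $H^*(X_i(P))$ as a direct sum of graded submodules of $H^*(X_j(E))$, completing the proof once the distributivity claim below is checked.

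The technical heart, and main obstacle, is verifying that the two intersections involved (namely, the one defining $SX$ and the one with $k[y,v^{p^{m-\ell-1}}]$ in Proposition 8.4) distribute over the direct sum decompositions at the level of graded modules. This reduces to a homogeneity statement: each summand in the decompositions we use is a sum of bihomogeneous pieces with respect to the $(y,v)$-bidegree, or equivalently cut out by $Out(E)$-isotypic type, and the two submodules we intersect with are themselves bihomogeneous. Since every decomposition in Sections 5--7 is either an $Out(E)$-isotypic splitting or an $A(E,E)$-refinement of one, the intersection splits coordinate-wise, giving graded submodules of each $H^*(X_j(E))$ individually. Once this routine bookkeeping is carried out, the statement follows.
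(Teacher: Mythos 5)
Your proposal is correct and follows essentially the same route the paper intends: the paper offers no explicit argument beyond ``From these results, we get,'' and your assembly of Lemma 8.1 and Theorem 8.2 for the non-metacyclic case, Theorem 8.3 and Proposition 8.4 for the split metacyclic case, and irreducibility of $BP$ (Dietz) for the non-split case is exactly the intended deduction. The only points worth tightening are that the non-dominant metacyclic summands $\bar L(1,i)$ (and the $L(2,i)$ for $M(1,2,1)$) should be mentioned explicitly alongside the dominant $X_q(M)$ covered by Theorem 8.3, and that in the non-split case one should say that $H^*(P)\cong k[y,v]$ is identified with a sum of submodules of the $H^*(X_j(p_+^{1+2}))$ via the same $S(-)$ and $k[y,v^{p^{m-\ell-1}}]$ intersections, rather than taking $H^*(P,j)=H^*(X_j(M))$ itself; both are routine given your bihomogeneity observation.
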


{\bf Example.}    Let $p=7,q=2$.  Then we have
\[\oplus_{j=1}^{5}H^*( SY_{j,2}(E))\cong \bC\bA\{1,y,...,y^5\}\{v^2\}.\]
Hence we have
\[X_2(M(1,2,1))\cong \oplus _j^5H^*(SY_{i,2}(E))\ominus
 H^*(S(L(2,2)\vee L(2,4)))\]
\[ \cong \bC\bA\{1,y_1,y_1^3,y_1^4,y_1^5\}\{v^2\}\oplus
            \bD\bA\{y_1^2v^2\}\]
which is still given in Theorem 5.3 (letting  $S(L(2,2)\vee L(2,4))=L(2,2)$).

\section{nilpotent elements}

Let us write $H^{even}(X;\bZ)/p$ by simply $H^{ev}(X)$ so that
\[ H^{ev}(G)=H^*(G)\oplus N(G)\]
where $N(G)$ is the nilpotent ideal in $H^{ev}(G)$. 

At first, we consider metacyclic groups.  Since $BP$ is irreducible in non split cases,
we only consider in split cases.  Recall
\[P=M(\ell,m,n)=\la a,b|a^{p^m}=b^{p^n}=1, [a,b]=a^{p^{\ell}}\ra\]
for $m>\ell\ge max(m-n,1)$.  

(I) Split metacyclic groups with $\ell>m-n$.  

By  Diethelm [Dim],  its mod $p$-cohomology is
\[ H^*(P;\bZ/p)\cong k[y,u]\otimes \Lambda(x,z)\quad |y|=|u|=2,\ |x|=|z|
=1.\]
Of course all elements in $H^*(P;\bZ)$ are  (higher) $p$-torsion.  
The additive structure of $H^*(P;\bZ)/p$ is decided by that of 
$H^*(P;\bZ/p)$ by the universal coefficient theorem.
Hence we have  additively (but  not  as rings)
\[H^*(P;\bZ)/p\cong H^*(\bZ/p\times \bZ/p;\bZ)\]
\[\cong k[y,u]\{1,\beta(xz)=yz-ux\}\qquad (1).\]

The element $u\in H^2(P;\bZ/p)$ is reduced [Dim] from the spectral sequence
\[E_2^{*,*'}\cong 
H^*(P/\la a\ra; H^{*'}(\la a\ra;\bZ/p))
\Longrightarrow H^*(P;\bZ/p).\]
In fact $u=[u']\in E_{\infty}^{0,2}$ identifying $H^2(\la a\ra;\bZ/p)\cong k\{u'\}$.
Hence $u|_{\la a\ra}=u'$.  On the other hand, for the element
$v=c_{p^{m-\ell}}(\eta)$ defined in $\S 5$, $v|_{\la a\ra}=(u')^{p^{m-\ell}}$ because
the total Chern class 
in $H^*(P;\bZ/p)$ is
\[ \sum c_i(\eta)|_{\la a \ra} =(1+u')^{p^{m-\ell}}=1+(u')^{p^{m-\ell}}.\]
Therefore we see
$ v=u^{p^{m-\ell}}$ $mod(y,xz)$ in $H^*(P;\bZ/p).$

Since $H^*(P)$ is multiplicatively generated by $y$ and $v$ with $|v|\ge 2p$
from Theorem 5.1, the element $u$ is not integral class (i.e. $u\not \in Im(\rho)$
for $\rho:H^{*}(P;\bZ)\to H^*(P;\bZ/p)$).  Therefore $xz$ is an integral class
since $dim H^2(P;\bZ)/p=2$ from (1),  and 
$ H^{2}(P;\bZ/p)\cong k\{y,u,xz\}.$
Moreover we have
\begin{lemma}
The ring of the integral classes in $H^{*}(P;\bZ/p)$ is given as
\[  H^{ev}(P)\cong k[y,v]\{1, xz,xzu,...,xzu^{p^{m-\ell}-2}\}\subset H^*(P;\bZ/p).\]
\end{lemma}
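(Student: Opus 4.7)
\emph{Proof plan.} The strategy is to combine the additive isomorphism~(1) with an explicit identification of the proposed generators as integral classes inside $H^{*}(P;\bZ/p)$, and close the argument by a Poincar\'e series comparison. First I would note that by~(1), since $|yz-ux|=3$ is odd, the even-degree integral classes $H^{ev}(P)$ are additively isomorphic to $k[y,u]$, hence have Poincar\'e series $1/(1-t^{2})^{2}$. A short calculation shows the right-hand side has the same Poincar\'e series:
\[
\frac{1+t^{2}+\cdots+t^{2(p^{m-\ell}-1)}}{(1-t^{2})(1-t^{2p^{m-\ell}})}=\frac{1-t^{2p^{m-\ell}}}{(1-t^{2})^{2}(1-t^{2p^{m-\ell}})}=\frac{1}{(1-t^{2})^{2}}.
\]
Moreover the claimed submodule is $k[y,v]$-free on the listed generators: using $v\equiv u^{p^{m-\ell}}$ modulo $(y,xz)$ and $(xz)^{2}=0$, the element $y^{a}v^{b}xzu^{k}$ has leading monomial $y^{a}u^{bp^{m-\ell}+k}xz$ inside the standard basis of $k[y,u]\otimes\Lambda(x,z)$, and for $a,b\ge 0$ and $0\le k\le p^{m-\ell}-2$ these leading monomials are pairwise distinct and disjoint from the leading monomials $y^{a}u^{bp^{m-\ell}}$ of $y^{a}v^{b}$.

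Next I would show each generator lies in the image of the reduction $\rho\colon H^{*}(P;\bZ)\to H^{*}(P;\bZ/p)$. Integrality of $y,v$ (Chern classes) and of $xz$ (dimension count in $H^{2}$) has already been recorded. For $xzu^{k}$ with $1\le k\le p^{m-\ell}-2$, I would use the lower Chern classes $c_{k+1}(\eta)\in H^{2(k+1)}(P;\bZ)$ of the induced representation $\eta=\Ind_{H}^{P}(\xi)$. Each $c_{k+1}(\eta)$ is integral by construction, and the vanishing $c_{1}(\eta)=\cdots=c_{p^{m-\ell}-1}(\eta)=0$ in $H^{*}(P)=H^{*}(P;\bZ)/(p,\sqrt{0})$ forces its mod $p$ reduction into the nilpotent ideal. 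Since the even-degree nilpotent ideal of $k[y,u]\otimes\Lambda(x,z)$ is the principal ideal $(xz)=xz\cdot k[y,u]$ (as $(xz)^{2}=0$ and the other exterior generators $x,z$ lie in odd degree), I may write $c_{k+1}(\eta)\equiv xz\,h_{k}(y,u)\bmod p$ for a homogeneous $h_{k}$ of degree $2k$. A leading-$u$-coefficient computation, exploiting that $\eta$ restricts on $\la a\ra$ to the sum of the $p^{m-\ell}$ $\la b\ra$-conjugate characters of $\xi|_{\la a\ra}$, should give $h_{k}\equiv(\text{unit})\cdot u^{k}$ modulo $(y)$. Inducting on $k$ and using the previously established integrality of $xzu^{j}$ for $j<k$ together with $y$ and $v$, this yields integrality of $xzu^{k}$.

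Combining the two steps, the submodule is contained in $H^{ev}(P)$, is $k[y,v]$-free of the correct rank, and has the correct Poincar\'e series, so it equals $H^{ev}(P)$. The ring structure asserted by the lemma is automatic, since $\Img(\rho)=\Ker(\beta)$ is closed under multiplication in $H^{*}(P;\bZ/p)$. The hardest step is the leading-$u$-term identification inside the integrality argument: one must verify that the $(k+1)$-st elementary symmetric polynomial in the $p^{m-\ell}$ conjugate characters of $\xi|_{\la a\ra}$ does not degenerate modulo $p$ in a way that kills the $u^{k}$-coefficient of $h_{k}$, and that the lower-order correction terms fit into the $y$-filtration and reduce to previously handled integral classes.
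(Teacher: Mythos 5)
Your overall skeleton --- prove one containment between $A:=k[y,v]\{1,xz,\ldots,xzu^{p^{m-\ell}-2}\}$ and the ring of integral classes, then conclude from the Poincar\'e series identity (both sides have dimension $n+1$ in degree $2n$) --- is sound, and your series computation and the verification that $A$ is $k[y,v]$-free of the stated rank are correct. Note that the paper runs the same dimension count but in the opposite direction: it argues $H^{ev}(P)\subset A$ by excluding non-integral monomials, and only afterwards (Lemma 9.2) identifies $xzu^{i-1}$ with $c_i(\eta)$ modulo lower terms, deducing that from Lemma 9.1 together with the Thomas--Huebschmann generation theorem rather than computing it directly. You have inverted that order, which forces you to carry out the hard identification up front.

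That is where the gap is. Your ``leading-$u$-coefficient computation'' for the integrality of $xzu^{k}$ rests on restriction to $\la a\ra$, and that restriction cannot yield the needed nonvanishing, for two independent reasons. First, the degeneration you flag as a risk actually occurs: the $\la b\ra$-conjugates of $\xi|_{\la a\ra}$ all have first Chern class congruent to $u'$ mod $p$, so $\sum_i c_i(\eta)|_{\la a\ra}=(1+u')^{p^{m-\ell}}=1+(u')^{p^{m-\ell}}$ (this is exactly the computation in $\S 9$ of the paper), hence $c_{k+1}(\eta)|_{\la a\ra}\equiv 0$ for every $0<k+1<p^{m-\ell}$. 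Second, even if it did not vanish, the target class $xzu^{k}$ itself restricts to $0$ on $\la a\ra$, since $xz$ is a product of two degree-one classes and $H^*(\la a\ra;\bZ/p)\cong k[u']\otimes\Lambda(x')$ has $(x')^2=0$; so no restriction to $\la a\ra$ can detect the coefficient $\lambda_k$ of $u^{k}$ in $h_k$. To close the gap you need a different source for $\lambda_k\neq 0$. One route that works: by Thomas--Huebschmann, $\Img(\rho)$ in even degrees is the subring generated by $y,\ \rho c_1(\eta),\ldots,\rho c_{p^{m-\ell}}(\eta)$, i.e.\ $k[y,v]+k[y,v]\{xzh_0,\ldots,xzh_{p^{m-\ell}-2}\}$ since $(xz)^2=0$; feeding in $\dim\Img(\rho)_{2n}=n+1$ for $n<p^{m-\ell}$ forces the $n$ elements $y^{n-1-j}xzh_j$ to be independent, which by triangularity on $u$-degree forces $\deg_u h_j=j$, i.e.\ $\lambda_j\neq 0$. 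Finally, a small correction: $\Img(\rho)$ is the kernel of the integral Bockstein, not of the mod $p$ Bockstein $\beta$; closure under multiplication is automatic anyway because $\rho$ is a ring homomorphism.
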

\begin{proof}
Each element $y^iu^j$ is not nilpotent,
since $H^*(P;\bZ/p)\cong k[y,u]$.
Hence for $1\le j< p^{m-\ell}$, 
each element  $y^iu^j$ is not integral.
 Let $A=k[y,v]\{1,xz,...,xzu^{p^{m-\ell}-2}\}.$
Then note that 
\[ A\cong H^*(P;\bZ/p)/(\bZ/p\{y^iu^j\}|1\le j<p^{m-\ell}).\]
Hence $H^{ev}(P)\subset A$ in the lemma.

  On the other hand $dimA=n+1$ when the degree
is $2n<2p^{m-\ell}$ which is equal to $dim H^{ev}(P)$.
\end{proof} 

Let us write
\[ c_1=xz,\  c_2=xzu,\ ...,\ c_{p^{m-\ell}-1}=xzu^{p^{m-\ell}-2}. \]
Then $c_ic_j=(xz)^2u^{i+j-2}=0$.  Here recall
$H^{even}(P;\bZ)$ is multiplicatively generated by $y,c_i(\eta)$
by the argument just before Theorem 5.1 (Thomas, Huebuschmann [Th],[Hu]).  Hence we know
\begin{lemma}
We have
$ c_i=\lambda_ic_i(\eta)\  mod(y,c_1,...,c_{i-1})$ with $\lambda_i\not =0\in \bZ/p$.
\end{lemma}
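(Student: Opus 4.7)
The plan is to prove the stronger statement that in the quotient $H^{ev}(P)/(y)$ one has $c_i(\eta) = \mu_i c_i$ for some nonzero $\mu_i \in \bZ/p$. Since $(y) \subseteq (y, c_1, \ldots, c_{i-1})$, this congruence persists in the further quotient $H^{ev}(P)/(y, c_1, \ldots, c_{i-1})$, so setting $\lambda_i = \mu_i^{-1}$ yields the lemma.

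The first step is to determine the degree $2i$ piece of $H^{ev}(P)/(y)$ from Lemma 9.1. Writing the nilpotent ideal as $N(P) = \bigoplus_{j=1}^{p^{m-\ell}-1} k[y,v]\,c_j$, its degree $2i$ part is spanned by monomials $y^a v^b c_j$ with $a + bp^{m-\ell} + j = i$; for $1 \le i < p^{m-\ell}$ this forces $b = 0$ and produces the basis $\{y^{i-j}c_j : 1 \le j \le i\}$, of which only $\bar c_i$ survives modulo $y$. The polynomial summand $k[y,v]$ contributes nothing in degree $2i$ modulo $y$ because $|v| = 2p^{m-\ell} > 2i$. Hence $(H^{ev}(P)/(y))_{2i}$ is one-dimensional with basis $\bar c_i$.

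Then I would prove $\bar c_i(\eta) \neq 0$ in $H^{ev}(P)/(y)$ by induction on $i$. By Thomas--Huebuschmann, $H^{ev}(P)$ is generated as a ring by $y, c_1(\eta), \ldots, c_{p^{m-\ell}-1}(\eta), v$, so its quotient modulo $y$ is generated by the images $\bar c_j(\eta)$ and $\bar v$. Any monomial of total degree $2i$ for $i < p^{m-\ell}$ must avoid $\bar v$ (whose degree already exceeds $2i$) and so has the shape $\bar c_{j_1}(\eta)\cdots\bar c_{j_s}(\eta)$ with $j_1 + \cdots + j_s = i$. When $s \ge 2$, every $j_k < i$, so the inductive hypothesis lets me replace each $\bar c_{j_k}(\eta)$ by a scalar multiple of $\bar c_{j_k}$, and the resulting product $\bar c_{j_1}\cdots\bar c_{j_s}$ vanishes because $c_j c_k = (xz)^2 u^{j+k-2} = 0$ in $H^*(P;\bZ/p)$. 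Thus the only monomial of degree $2i$ that can survive is $\bar c_i(\eta)$ itself, and since the degree $2i$ piece of $H^{ev}(P)/(y)$ is nonzero, necessarily $\bar c_i(\eta) \neq 0$; combined with the dimension count this forces $\bar c_i(\eta) = \mu_i \bar c_i$ with $\mu_i \neq 0$.

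The main obstacle is the combinatorial bookkeeping in the induction: one has to rule out the possibility that some combination of lower-degree Chern classes contributes to the degree $2i$ slot, and this is exactly where the exterior-algebra annihilation $(xz)^2 = 0$ does the decisive work, collapsing every length-$\ge 2$ monomial to zero. The rest is a direct degree count from Lemma 9.1, after which $\lambda_i = \mu_i^{-1}$ is the desired nonzero scalar.
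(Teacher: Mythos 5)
Your proof is correct and takes essentially the same route as the paper's (two-sentence) argument: both rest on the Thomas--Huebuschmann generation of $H^{even}(P;\bZ)$ by $y$ and the $c_j(\eta)$, the additive structure from Lemma 9.1, and the vanishing $c_jc_k=(xz)^2u^{j+k-2}=0$ which rules out contributions from products of lower Chern classes. Your reformulation via the one-dimensionality of the degree-$2i$ part of $H^{ev}(P)/(y)$ simply makes explicit the details the paper leaves implicit (and in fact that one-dimensionality already gives $\bar c_j(\eta)=\mu_j\bar c_j$ for every $j<i$ without invoking the inductive hypothesis).
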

\begin{proof}
By induction, assume the equation for $i-1$.
Since $c_i=xzu^{i-1}$, it  is not represented by
the polynomial of $y,c_1,...,c_{i-1}$. So it must be represented
by $c_i(\eta)$ by the result of Thomas and Huebuschmann.
\end{proof}
Thus we get 
\begin{thm}  Let $P$ be a split metacyclic group $M(\ell,m,n)$ with\\
 $\ell>m-n$.  Then we have 
\[ H^{ev}(P)\cong k[y,v]\{1,c_1,...,c_{p^{m-\ell}-1}\}\quad with\ c_ic_j=0,\]
 that is $N(P)\cong
k[y,v]\{c_1,...,c_{p^{m-\ell}-1}\}$.
\end{thm}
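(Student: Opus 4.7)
The plan is to read off the theorem almost immediately from Lemma 9.1, after verifying the multiplicative relations $c_ic_j=0$ and identifying the nilpotent ideal. First I would recall from Lemma 9.1 that
\[
H^{ev}(P) \cong k[y,v]\{1, xz, xzu, \ldots, xzu^{p^{m-\ell}-2}\}
\]
as a $k[y,v]$-module sitting inside $H^*(P;\bZ/p)$, and set $c_i = xzu^{i-1}$ for $1\le i\le p^{m-\ell}-1$. This gives the additive (in fact $k[y,v]$-module) isomorphism claimed in the theorem once the multiplicative relations are checked.

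The key multiplicative observation is that, since $p$ is odd and $x, z \in H^1(P;\bZ/p)$, the classes $x$ and $z$ are exterior, so $x^2 = z^2 = 0$ and hence $(xz)^2 = -x^2 z^2 = 0$. Therefore
\[
c_i c_j = (xz)^2\, u^{i+j-2} = 0
\]
for all $i,j \ge 1$. This yields the ring structure stated in the theorem.

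For the identification of $N(P)$, I would argue as follows. By Theorem 5.1 the quotient $H^*(P) = H^{ev}(P)/\sqrt{0}$ is the polynomial ring $k[y,v]$, so $y$ and $v$ are non-nilpotent in $H^{ev}(P)$. On the other hand each $c_i$ is nilpotent, since $c_i^2 = 0$. Because $\{1, c_1, \ldots, c_{p^{m-\ell}-1}\}$ is a $k[y,v]$-basis and the quotient by the $k[y,v]$-submodule $k[y,v]\{c_1,\ldots,c_{p^{m-\ell}-1}\}$ is precisely $k[y,v] \cong H^*(P)$, we conclude
\[
N(P) = k[y,v]\{c_1,\ldots,c_{p^{m-\ell}-1}\}.
\]

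The only mild obstacle is ensuring that the $c_i$ introduced as products $xzu^{i-1}$ are genuine integral classes (so that they really lie in $H^{ev}(P)$, which is the image of $H^{even}(P;\bZ)$); this is exactly the content of Lemma 9.1 (their integrality) together with Lemma 9.2 (their coincidence up to scalars and lower-order corrections with the Chern classes $c_i(\eta)$ used as the Thomas--Huebschmann generators of $H^{even}(P;\bZ)$). Apart from this bookkeeping the proof is essentially the two lines above.
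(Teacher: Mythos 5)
Your proposal is correct and follows essentially the same route as the paper: the theorem is read off from Lemma 9.1, the relation $c_ic_j=(xz)^2u^{i+j-2}=0$ comes from the exterior classes $x,z$, and Lemmas 9.1--9.2 supply the integrality and Chern-class identification. Your explicit justification that $N(P)$ equals the stated ideal (nilpotency of the $c_i$ plus reducedness of the quotient $k[y,v]$) is a slightly more careful rendering of what the paper leaves implicit, but it is the same argument.
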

As $Out(P)$ modules,
$k\{c_i\}=k\{xzu^i\}\cong S_j$ when $i=j\ mod(p-1)$.  Therefore we have 
\begin{cor}  
 Let $P$ be a split metacyclic group $M(\ell,m,n)$\\
 with $\ell>m-n$.  
Then
\[H^{ev}(X_i)\cong H^*(X_i)\oplus k[y,V]
\{v^rc_s|r+s=i\ mod(p-1)\}\]
where $1\le s\le p^{m-\ell}-1.$
\end{cor}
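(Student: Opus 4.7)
The plan is to combine Theorem 9.3 with an $Out(P)$-isotypic decomposition of $N(P)$. By Lemma 2.1, $N(P)$ is an $A(P,P)$-submodule of $H^{ev}(P)$, and $H^{ev}(P)=H^*(P)\oplus N(P)$ as $A(P,P)$-modules. Applying the idempotent $e_{X_i}\in A(P,P)$ gives
\[ H^{ev}(X_i) \;=\; e_{X_i}H^*(P)\,\oplus\, e_{X_i}N(P) \;=\; H^*(X_i)\,\oplus\, e_{X_i}N(P), \]
so by Theorem 5.2 it suffices to identify $e_{X_i}N(P)$ with the module $k[y,V]\{v^rc_s\mid r+s\equiv i\ \mod(p-1)\}$.

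Next I would analyse the $Out(P)$-action on $N(P)$. Recall $Out(P)\cong G_p:\bZ/(p-1)$ with $G_p$ a $p$-group acting unipotently on any $k$-representation, so every simple $k[Out(P)]$-module is inflated from a 1-dimensional character $S_i$ of $\bZ/(p-1)$, and the block idempotents of $k[Out(P)]$ project onto the $\bZ/(p-1)$-weight subspaces. The weights of the generators appearing in Theorem 9.3 are: $y$ is invariant, $v\mapsto jv$ as in Section 5, and for $c_s=xzu^{s-1}$ the automorphism $a\mapsto a^j$ acts by $x\mapsto jx$, $u\mapsto ju$, $z\mapsto z$, giving $c_s\mapsto j^s c_s$. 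Since $\bZ/(p-1)$ has order prime to $p$, the $c_s$ can be chosen to be genuine eigenvectors of weight $s$ by diagonalising within the filtration coming from Lemma 9.2. Consequently $v^rc_s$ has $\bZ/(p-1)$-weight $r+s$, and the $S_i$-isotypic part of $N(P)$ is exactly
\[ k[y,V]\{v^rc_s\mid r+s\equiv i\ \mod(p-1),\ 0\le r\le p-2\}. \]

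Finally I would match this isotypic piece with $e_{X_i}N(P)$. For $i\ne 0$, Section 5 gives $Y_i=X_i$, so $e_{X_i}$ coincides with the $Out(P)$-block idempotent for $S_i$ and there is nothing further to check. For $i=0$ one has $Y_0=X_0\vee\bigvee_{j=0}^{p-2}\bar L(1,j)$, whence $e_{Y_0}=e_{X_0}+\sum_j e_{\bar L(1,j)}$ in $A(P,P)$. But each $\bar L(1,j)$ is a stable summand of $B\la b\ra$, and $H^{ev}(\la b\ra)\cong k[y]$ is a polynomial ring with no nilpotent elements, so $e_{\bar L(1,j)}N(P)=0$ and therefore $e_{X_0}N(P)=e_{Y_0}N(P)$. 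In both cases one recovers the formula of the corollary after summing with $H^*(X_i)$ from Theorem 5.2.

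The main obstacle is the weight computation $j^*c_s=j^sc_s$ in the middle step: the classes $c_s$ were only pinned down up to $k[y]$-combinations of $c_1,\ldots,c_{s-1}$ in Lemma 9.2, so one must either argue via the Chern classes $c_s(\eta)$ (using that $a\mapsto a^j$ sends $\eta=\Ind_H^P(\xi)$ to $\Ind_H^P(\xi^j)$ and tracking top-degree coefficients) or by a direct analysis of the $\bZ/(p-1)$-action on the generators $x,z,u$ of $H^*(P;\bZ/p)$ followed by diagonalisation. Once this eigenvector statement is established, the rest of the argument is bookkeeping via the idempotent decomposition.
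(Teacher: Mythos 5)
Your argument is correct and follows essentially the same route as the paper, which justifies the corollary in a single line by noting that $k\{c_s\}\cong S_s$ as an $Out(P)$-module (weight $s$ under $\bZ/(p-1)$) and then reading off the isotypic pieces of $N(P)=k[y,v]\{c_1,\dots,c_{p^{m-\ell}-1}\}$. Your additional care with the $i=0$ case (showing $e_{\bar L(1,j)}N(P)=0$ because $H^{ev}(\bar L(1,j))\subset k[y]$ has no nilpotents) and with diagonalising the $c_s$ within the filtration of Lemma 9.2 fills in details the paper leaves implicit, but does not change the method.
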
 

(II) Split metacyclic groups $P=M(\ell,m,n)$ with $\ell=m-n$.

By  also Diethelm [Dim], its mod $p$-cohomology is
\[H^*(P;\bZ/p)\cong k[y,v']\otimes\Lambda (a_1,...,a_{p-1},b,w)/(a_ia_j=a_iy=a_iw=0)\]
where $|a_i|=2i-1,|b|=1, |y|=2,|w|=2p-1, |v'|=2p$.  So we see
\[ H^*(P;\bZ/p
)/\sqrt{0}
\cong k[y,v'].
\]
Note that additively $H^*(P;\bZ)/p\cong H^*(p_-^{1+2};\bZ)/p$,
which is well known.  In particular, we get additively
\[H^{ev}(P)\cong (k[y]\oplus k\{c_1,...,c_{p-1}\})
\otimes k[v']\quad (with\ c_i=a_ib)\]
\[  \cong (k[y]\oplus k\{c_1,...,c_{p-1}\})\otimes k[v]\{1,v',...,(v')^{p^{m-\ell-1}-1}\}.\]
Therefore $H^{ev}(P)$ is additively isomorphic to
\[ H^{ev}(P)\cong \oplus_{i,j} k[v]
\{a_ib(v')^j\}
\oplus\oplus_{j} k[v,y]\{(v')^j\}\]
where $1\le i\le p-1$ and $0\le j\le p^{m-\ell-1}-1$.
Here $a_ib(v')^j$ is nilpotent and hence  integral class and let $c_{jp+i}=a_ib(v')^j$.  The element
$(v')$ is not nilpotent and we can take as the integral class $wb$ of dimension $2p$.  Let us write
$c_{pj}=wb(v')^{j-1}$.  Thus we have
\begin{thm}  Let $P$ be a split metacyclic group $M(\ell,m,n)$ with\\
 $\ell=m-n$.  Then
\[H^{ev}(P)\cong k[y,v]\oplus  k[y,v]\{c_i|i=0\ mod(p)\}\oplus k[v]\{c_i|i\not =0\ mod(p)\}\]
where $i$ ranges $1\le i\le p^{m-\ell}-1$.  Here the multiplications are given by
$c_ic_j=0$
for $0<i,j<p^{m-\ell}$ and  $yc_k=0$ for $k\not =0$ $mod(p)$.
\end{thm}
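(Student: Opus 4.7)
The plan is to upgrade the additive decomposition established in the paragraph preceding the theorem to the claimed ring-level description, by transporting the multiplicative relations from Diethelm's mod $p$ formula
\[H^*(P;\bZ/p)\cong k[y,v']\otimes\Lambda(a_1,\dots,a_{p-1},b,w)/(a_ia_j=a_iy=a_iw=0).\]
From the preceding discussion, integral representatives $c_{jp+i}$ (for $1\le i\le p-1$, $j\ge 0$) can be taken as $a_ib(v')^j$, and for indices divisible by $p$ we set $c_{pj}=wb(v')^{j-1}$. All of these elements lie in the nilpotent ideal of $H^{ev}(P)$, so the non-nilpotent summand is the polynomial algebra $k[y,v]$ generated by the integral classes $y$ and $v$ identified in Theorem 5.1, where $v$ reduces mod nilpotents to $(v')^{p^{m-\ell-1}}$.

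To verify $c_ic_j=0$ for $1\le i,j\le p^{m-\ell}-1$, I would argue case-by-case on the $p$-divisibility of $i$ and $j$. Since every generator $c_i$ contains a factor of $b$ and $p$ is odd, $|b|=1$ forces $b^2=0$, so every product already contains $b^2$. More explicitly: if neither index is divisible by $p$, then $c_ic_j=a_{i'}a_{j'}b^2(v')^{i''+j''}=0$ thanks to $a_{i'}a_{j'}=0$; if exactly one index is divisible by $p$, then $c_ic_j=a_{j'}wb^2(v')^{\bullet}=0$ from $a_{j'}w=0$; and if both indices are divisible by $p$, then $c_ic_j=w^2b^2(v')^{\bullet}=0$ since $|w|=2p-1$ is odd, hence $w^2=0$.

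To verify the annihilation $yc_k=0$ for $k\not\equiv 0\pmod p$, I would use $c_k=a_{k'}b(v')^{k''}$ together with the Diethelm relation $ya_{k'}=0$, giving $yc_k=0$ immediately. In contrast, for $k\equiv 0\pmod p$ one has $c_k=wb(v')^{k/p-1}$ and no relation of the form $yw=0$ is imposed, so $yc_k\neq 0$; this is precisely the dichotomy that produces $k[y,v]\{c_k\}$ when $p\mid k$ and $k[v]\{c_k\}$ when $p\nmid k$ in the statement. Finally, the $k[v]$-action on each $c_k$-summand is faithful because $v'$ (hence $v$) is not nilpotent in $H^*(P;\bZ/p)/\sqrt{0}\cong k[y,v']$, and the $k[y,v]$-freeness in the $p\mid k$ summands follows by comparing with the additive decomposition $H^{ev}(P)\cong \oplus_{i,j}k[v]\{a_ib(v')^j\}\oplus\oplus_j k[y,v]\{(v')^j\}$ already listed.

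The main obstacle I anticipate is the integrality bookkeeping: one must confirm that the elements $a_ib(v')^j$ and $wb(v')^{j-1}$ really lie in the image of $\rho:H^*(P;\bZ)\to H^*(P;\bZ/p)$, i.e. that they are annihilated by the integral Bockstein. For $a_ib(v')^j$ this is essentially the assertion that nilpotent even-degree classes in $H^*(P;\bZ/p)$ are integral in this low-rank setting, and for $wb(v')^{j-1}$ one uses that $(wb)^{p^{m-\ell-1}}$ matches $v$ modulo lower nilpotent terms together with the $p$-power Bockstein vanishing. Once these lifts are fixed, the relations $c_ic_j=0$ and $yc_k=0$ (for $k\not\equiv 0\pmod p$) already computed in $H^*(P;\bZ/p)$ transport back to $H^{ev}(P)$ because $\rho$ is a ring map, and the count of generators in each degree matches the additive isomorphism, finishing the proof.
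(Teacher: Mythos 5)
Your proposal takes the same route as the paper: Diethelm's presentation of $H^*(P;\bZ/p)$, the identification $c_{jp+i}=a_ib(v')^j$ and $c_{pj}=wb(v')^{j-1}$, and the vanishing of products read off from the relations $a_ia_j=a_iy=a_iw=0$ together with $b^2=w^2=0$ (both generators being of odd degree). Your case analysis for $c_ic_j=0$ and for $yc_k=0$ when $p\nmid k$ is exactly the computation the paper leaves implicit, and it is correct, as is the observation that $yw b\neq 0$ produces the $k[y,v]$-summands for $p\mid k$.

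The one genuinely flawed point is your justification that $wb(v')^{j-1}$ is an integral class. You appeal to the claim that $(wb)^{p^{m-\ell-1}}$ matches $v$ modulo lower nilpotent terms; but $wb$ is a product of two odd-degree exterior generators, so $(wb)^2=0$, hence that power vanishes for $m-\ell\ge 2$ (the only case in which indices divisible by $p$ occur), while $v$ is not nilpotent, so no such identity can hold. The correct argument is a dimension count: in degree $2p$ the even mod-$p$ cohomology is $k\{y^p,v',wb\}$ and the integral subspace $H^{ev}(P)$ is $2$-dimensional by the known additive structure; since $H^*(P)=H^*(P;\bZ)/(p,\sqrt{0})\cong k[y,v]$ with $|v|=2p^{m-\ell}>2p$, the nilpotent summand $N(P)$ is $1$-dimensional there, and a class $\lambda wb+\mu v'+\nu y^p$ is nilpotent only if $\mu=\nu=0$ (because $(\lambda wb+\mu v'+\nu y^p)^N=(\mu v'+\nu y^p)^N+N\lambda wb(\mu v'+\nu y^p)^{N-1}$ and $k[y,v']$ is polynomial). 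Hence the nilpotent integral class in that degree is forced to be a scalar multiple of $wb$, and similarly $wb(v')^{j-1}$ in higher degrees; in particular $v'$ itself is not integral, which is what makes the theorem's relations consistent. With that repair your argument coincides with the paper's.
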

Hence we have 
\begin{cor} Let $P=M(\ell,m,n)$ for $\ell=m-n$.  Then
\[H^{ev}(X_i)\cong H^*(X_i)\oplus k[y,V]\{v^rc_s|s=0\ mod(p),\ r+s=i\ mod(p-1)\}\]
       \[ \oplus k[V]\{v^rc_s|s\not =0\ mod(p),\ r+s=i\ mod(p-1)\} .\]
\end{cor}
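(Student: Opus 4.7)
The plan is to mirror the approach of Corollary 9.3: decompose $H^{ev}(P)$ into its $\bZ/(p-1)$-isotypical components via the $Out(P)$-action, then pass from the $Out(P)$-idempotent summand $Y_i$ to the dominant summand $X_i$ by subtracting the (nilpotent-free) contributions of $\bar L(1,j)$.

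The first step is to determine the action of $Out(P)\cong (p\text{-group}):\bZ/(p-1)$ on the generators appearing in Theorem 9.4. As noted in Section 5, the $p$-group factor acts trivially, while $j\in\bZ/(p-1)$ acts by $a\mapsto a^j$, giving $v\mapsto jv$ and fixing $y$. Using Diethelm's description of $H^*(P;\bZ/p)$ and tracing through the Lyndon--Hochschild--Serre spectral sequence for $1\to\la a\ra\to P\to\la b\ra\to 1$, the fiber-side classes scale according to their degree: $a_i\mapsto j^i a_i$ and $v'\mapsto jv'$, while the base-side classes $b$ and $w$ are fixed. Consequently every integral class $c_s$ of degree $2s$, whether $c_s=a_i b(v')^j$ with $s=pj+i$ or $c_s=wb(v')^{j-1}$ with $s=pj$, transforms by the character $j\mapsto j^s$.

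Combining this with $v^r\mapsto j^r v^r$ and the trivial action on $y$ and $V=v^{p-1}$, the weight-$i$ isotypical piece of $H^{ev}(P)$ from Theorem 9.4 reads
\[
H^{ev}(Y_i)\cong k[y,V]\{v^i\}\oplus k[y,V]\{v^r c_s\mid s\equiv 0\ mod(p),\ r+s\equiv i\ mod(p-1)\}
\]
\[
\oplus\ k[V]\{v^r c_s\mid s\not\equiv 0\ mod(p),\ r+s\equiv i\ mod(p-1)\}.
\]
By Section 5, $Y_i\cong X_i$ for $i\neq 0$, and $Y_0\cong X_0\vee\bigvee_j\bar L(1,j)$ with $H^*(\bar L(1,j))\cong k[Y]\{y^j\}$ nilpotent-free. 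The passage from $Y_0$ to $X_0$ thus affects only the non-nilpotent summand $k[y,V]\{1\}$, replacing it by $k[y,V]\{V\}=H^*(X_0)$, while leaving the two nilpotent summands untouched; this yields the stated decomposition.

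The main obstacle is the first step. Unlike the setting of Corollary 9.3 where $c_s=xzu^{s-1}$ is a monomial with manifest $\bZ/(p-1)$-weight, here the generators $a_i,b,w,v'$ satisfy the relations $a_i a_j=a_i y=a_i w=0$ and cannot be inspected so directly; the weight of $c_s$ has to be extracted from compatibility of the $\bZ/(p-1)$-action with the natural grading on the fiber cohomology $H^*(\la a\ra;\bZ/p)\cong k[u]$ under $u\mapsto ju$. Once that is in hand, the remainder is isotypical bookkeeping exactly parallel to the argument of Corollary 9.3.
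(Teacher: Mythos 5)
Your overall strategy is the right one and is essentially what the paper intends (the paper offers no explicit proof, only the remark analogous to the one before Corollary 9.3 that $k\{c_i\}$ is the weight-$i$ character of $\bZ/(p-1)$): decompose $N(P)$ from Theorem 9.5 into $\bZ/(p-1)$-isotypical pieces, and observe that passing from $Y_0$ to $X_0$ only removes the nilpotent-free summands $H^*(\bar L(1,j))\subset k[y]$. The second half of your argument is fine.

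The gap is in the weight computation, and it is not cosmetic: you assign $w$ to the base of the extension $1\to\la a\ra\to P\to\la b\ra\to 1$ and declare it fixed by $j\in\bZ/(p-1)$, but this is incompatible with your own conclusion and with the corollary. If $j^*w=w$ and $j^*v'=jv'$, then $c_{pj}=wb(v')^{j-1}$ transforms by $j\mapsto j^{\,j-1}$, whereas the character $j\mapsto j^{s}$ with $s=pj\equiv j\ mod(p-1)$ is $j\mapsto j^{\,j}$; these differ, so the ``consequently'' fails exactly for the classes $c_s$ with $s\equiv 0\ mod(p)$, which would shift the congruence in the first summand of the corollary from $r+s\equiv i$ to $r+s\equiv i+1$. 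The source of the error is that $w$ cannot live on the base: $H^*(\la b\ra;\bZ/p)\cong \Lambda(z)\otimes k[y]$ has no generator in degree $2p-1$, so $w$ is detected on the fiber (by $x'(u')^{p-1}$ in $H^{2p-1}(\la a\ra;\bZ/p)$, the same pattern as your $a_i\leftrightarrow x'(u')^{i-1}$), and hence $j^*w=j^{p}w=jw$, i.e.\ $w$ has weight $1$. Equivalently, and more in the spirit of Lemma 9.2, one can avoid the spectral sequence altogether: by Thomas--Huebschmann $H^{even}(P;\bZ)$ is generated by $y$ and the Chern classes $c_s(\eta)$, the class $c_s$ of Theorem 9.5 agrees with $\lambda_s c_s(\eta)$ modulo lower classes, and $c_s(\eta)$ visibly has weight $s$ under $a\mapsto a^{j}$. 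With $w$ of weight $1$ every $c_s$ does transform by $j\mapsto j^{s}$, and the rest of your bookkeeping then yields the stated decomposition.
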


(III)  groups $P=C(r)$ or $ G(r',e)$. 

 Let $P=C(r)$.  Then it is known ([Ya1])
\[N(P)=\begin{cases}
            k[v]\{c_2,...,c_{p-2}\}\quad r=3,\ \ |c_i|=2i\\
            k[v]\{c_1,...,c_{p-2}\}\quad r\ge 4
\end{cases}\]
Each $c_i$ is defined as a Chern class and as $Out(C(r))$ modules, we see
$k\{c_i\}\cong det^i$.

For $P=G(r,e)$,  each $c_i$ is invariant under the action $c^*$.  Hence we have
\[N(G(r+1,e))\cong N(C(r)).\]
\begin{thm}  Let $P=C(r)$ or $G(r+1,e)$ for $r\ge 3$.  Then
\[H^{ev}(X_{j,i})\cong \begin{cases}
                     H^*(X_{0,i})\oplus k[V]\{v^rc_s|r+s=i\ mod(p-1)\}\quad j=0\\
          H^*(X_{j,i})\quad j\not =0.
\end{cases}\]
where $s$ ranges    $\begin{cases}  2\le s\le p-2 \quad for \ r=3,\\
                                    1\le s \le p-2 \quad for \ r\ge 4.
                                        \end{cases} $
\end{thm}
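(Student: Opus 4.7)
The plan is to leverage Lemma 2.1, which says $\sqrt{0}\subset H^{ev}(P)$ is an $A(P,P)$-submodule. Combined with the fact that the non-nilpotent generators of $H^*(P)=H^{ev}(P)/\sqrt{0}$ are Chern classes that lift canonically to $H^{ev}(P)$, this yields an $A(P,P)$-module splitting $H^{ev}(P)=H^*(P)\oplus N(P)$. Applying each primitive idempotent $e_{X_{j,i}}\in A(P,P)$ then gives $H^{ev}(X_{j,i})=H^*(X_{j,i})\oplus e_{X_{j,i}}N(P)$, reducing the problem to computing $e_{X_{j,i}}N(P)$ for each $(j,i)$.

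Next I would analyze $N(P)$ as an $Out(P)$-module. From the description preceding the theorem, $N(P)=k[v]\{c_s\}$ with $k\{c_s\}\cong det^s$ when $P=C(r)$, and the analogous 1-dimensional character for $P=G(r+1,e)$ obtained by restricting the $Out(C(r))$-action via the $\langle c\rangle$-invariance. Hence $N(P)$ is a direct sum of 1-dimensional $Out(P)$-modules: each $k\{v^r c_s\}$ carries weight $r+s\bmod(p-1)$ under the scalar torus, lies in the $\bZ/2$-trivial isotypic when $P=G(r+1,e)$, and in the $S(A)^0$-isotypic when $P=C(r)$.

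For $j\neq 0$: the simple $A(P,P)$-module attached to $X_{j,i}$ has $Out(P)$-structure of dimension $j+1$ when $P=C(r)$, and lies in the $\bZ/2$-nontrivial isotypic $k\{y_1v^i\}$ when $P=G(r+1,e)$ with $j=1$. Since every composition factor of $N(P)$ as an $A(P,P)$-module inherits the $Out(P)$-constraints of $N(P)$ itself (1-dimensional, $\bZ/2$-trivial, or $S(A)^0$-trivial), neither of these simples can occur. Therefore $e_{X_{j,i}}N(P)=0$, giving the second case of the theorem.

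For $j=0$: the $det^i$-isotypic summand of $N(P)$ is exactly $k[V]\{v^rc_s: r+s\equiv i\pmod{p-1}\}$ with $V=v^{p-1}$ acting freely and preserving the weight. I would verify that this whole summand sits in the dominant $X_{0,i}$ rather than in any non-dominant piece of $Y_{0,i}$. The non-dominant summands potentially appearing in $Y_{0,i}$ are $L(1,q)$, $L(2,q)$, and (for $G(r+1,e)$) $X_{p-1,q}(C(r))$. For $L(1,q)$ and $L(2,q)$ the cohomology is transferred from a cyclic or elementary abelian subgroup whose integral mod-$p$ cohomology is nilpotent-free, so contains no nilpotents; for $X_{p-1,q}(C(r))$ inside $BG(r+1,e)$ the already-established $C(r)$ case of the present theorem shows its cohomology is likewise nilpotent-free. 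Hence the full $det^i$-isotypic of $N(P)$ lands in $X_{0,i}$. The main obstacle is justifying the $Out(G(r+1,e))$-action on $c_s$, namely that $c_s$ lies in the $\bZ/2$-trivial isotypic; this reduces to a Chern-class computation under the involution $a\mapsto a^{-1}$ applied to the induced representation defining $c_s$, which should follow from the $\langle c\rangle$-invariance together with a routine parity check.
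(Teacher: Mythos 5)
Your proposal is correct and follows essentially the same route as the paper, which states the theorem without a written proof and relies on exactly the facts you use: $N(P)$ is a sum of one-dimensional $\det$-type $Out(P)$-modules (so only the $j=0$ dominant summands can receive composition factors of it), while the non-dominant pieces $L(1,q)$, $L(2,q)$, $X_{p-1,q}(C(r))$ carry no nilpotents. Your elaborations (the exactness of applying $e_{X_{j,i}}$, the restriction-to-$Out(P)$ argument for $j\neq 0$, and the $\bZ/2$-triviality of $c_s$ for $G(r+1,e)$, which indeed holds since $-I$ has determinant $1$) are the details the paper leaves implicit.
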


\section{Chow rings and motives}

For a smooth quasi projective algebraic variety $X$ over $\bC$,
let $CH^*(X)$ be the Chow ring generated by algebraic cycles of codimension $*$
modulo rational equivalence.  There is a natural (cycle) map
\[cl:  CH^*(X)\to  H^{2*}(X(\bC);\bZ).\] 
where $X(\bC)$ is the complex manifold of $\bC$-rational points of  $X$.

Let $V_n$ be  a $G$-$\bC$-vector space such that  $G$ acts  freely on $V_n-S_n$, with $codim_{V_n}S_n=n$.
Then it is known that $(V_n-S_n)/G$ is a smooth quasi-projective algebraic variety. 
Then it is known that $CH^*((V_n-S_n)/G)$ is independent 
of the choice of $V_n,S_n$.  Hence 
 Totaro defines the Chow ring of $BG$ ([To1]) by
\[ CH^*(BG)= lim_{n\to \infty}CH^*((V_n-S_n)/G).\] 
  Moreover we can approximate
$\bP^{\infty}\times BG$ by  smooth projective varieties  from Godeaux-Serre arguments ([To1]).

Let $P$ be a $p-group$.
By the Segal conjecture, the $p$-complete  automorphism $\{BP,BP\}$
of stable homotopy groups
is isomorphic to $A(P,P)_{\bZ_p}$, which is generated by transfers and map induced
from homomorphisms. 
Since $ CH^*(BP)$ also has the transfer map, we see $CH^*(BP)$ is an $A(P,
P)$-module.
For an $A(P,P)$-simple module $S$, recall $e_S$ is the corresponding 
idempotent
element and  $X_S=e_SBP$ the irreducible stable homotopy 
summand.
Let us define
\[ CH^*(X_S)=e_SCH^*(BP)\]
so that the following diagram commutes
\[ \begin{CD}
      CH^*(BP)_{(p)}  @>{cl}>> H^{2*}(BP;\bZ_{(p)}) \\
          @VVV       @VVV\\
      CH^*(X_{S})_{(p)} @>{cl}>> H^{2*}(X_{S};\bZ_{(p)}).
\end{CD}  \]

For smooth schemes $X$.$Y$ over a field $K$, let $Cor(X,Y)$ be the group 
of finite correspondences
from $X$ to $Y$ (which is a $\bZ_p$-module on the set of closed 
subvarieties of
$X\times _{K}Y$ which are finite and surjective over some connected 
component of $X$.
Let $Cor(K,\bZ_p)$ be the category of smooth schemes whose groups of 
morphisms
$Hom(X,Y)=Cor(X,Y)$.Voevodsky constructs the triangulated category $DM=DM(K,\bZ_p)$ which 
contains the category $Cor(K,\bZ_p)$ (and $limit$ of objects in $ Cor(K,\bZ_p)$).

\begin{lemma}
Let $S$ be a simple $A(P,P)$-module.  Then there  is a motive $M_S\in DM(\bC,\bZ_p)$ such that
\[ CH^*(M_S)\cong CH^*(X_S)=e_SCH^*(BP).\]
\end{lemma}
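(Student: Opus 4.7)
The plan is to realize the idempotent $e_S \in A(P,P) \otimes \bZ_p$ as a projector on the motive of a smooth projective Godeaux--Serre approximation to $BP$, and then invoke the pseudo-abelian structure of $DM(\bC,\bZ_p)$ to split off the summand $M_S$.

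First, following Totaro, I would fix a faithful $P$-representation $V_n$ with a closed subset $S_n$ on which $P$ acts freely outside $S_n$ and $\codim_{V_n}(S_n)=n$.  The quotient $U_n^P=(V_n-S_n)/P$ is a smooth quasi-projective variety, and by the Godeaux--Serre construction there is a smooth projective variety $\bar X_n^P$ together with an open embedding $U_n^P\hookrightarrow \bar X_n^P$ such that $CH^i(\bar X_n^P)\to CH^i(U_n^P)\cong CH^i(BP)$ is an isomorphism for $i<n$.  I would do the same for every subgroup $Q\le P$, taking $V_n$ large enough that both $U_n^Q\to U_n^P$ (the finite étale covering) and, for each homomorphism $\phi:Q\to P$, the induced map $U_n^Q\to U_n^P$ (after a shift by a suitable vector bundle pullback) make sense as morphisms of smooth varieties.

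Second, I would check that each basis element $[Q,\phi]\in A(P,P)$ is represented by a finite correspondence between $\bar X_n^P$ and itself.  The transfer $BP\to BQ$ is represented by the graph of the finite étale cover $U_n^Q\to U_n^P$, which is a finite correspondence in the sense of Voevodsky; composing with the morphism $B\phi:U_n^Q\to U_n^P$ gives the required endomorphism in $Cor(\bC,\bZ_p)$, and hence an endomorphism of the motive $M(\bar X_n^P)\in DM(\bC,\bZ_p)$.  This assignment is compatible with composition in $A(P,P)$ (by the double coset formula for finite correspondences) and recovers the action on $CH^*(BP)$ under the realization, so it extends $\bZ_p$-linearly to a ring map $A(P,P)\otimes\bZ_p\to \End_{DM}(M(\bar X_n^P))$.

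Third, the idempotent $e_S$ is then a projector in $\End_{DM}(M(\bar X_n^P))$, and since $DM(\bC,\bZ_p)$ is pseudo-abelian (idempotents split in Voevodsky's category), there exists a summand $M_S^{(n)}$ of $M(\bar X_n^P)$ with $M(\bar X_n^P)\cong M_S^{(n)}\oplus M_{1-e_S}^{(n)}$.  Applying $CH^*$, which is additive on direct sums of motives, gives $CH^i(M_S^{(n)})=e_S\cdot CH^i(\bar X_n^P)=e_S\cdot CH^i(BP)=CH^i(X_S)$ for $i<n$.  Letting $n\to\infty$ (or simply noting that the motives $M_S^{(n)}$ form a compatible system whose Chow groups stabilize in each degree) produces the desired motive $M_S$ with $CH^*(M_S)\cong CH^*(X_S)$.

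The main obstacle is the second step: one must verify that the assignment $[Q,\phi]\mapsto$ (finite correspondence on $\bar X_n^P$) is well defined, compatible with the ring structure of $A(P,P)$ and agrees with the topological/cohomological action, independently of the auxiliary choices of $V_n$ and the compactification $\bar X_n^P$.  Totaro's independence-of-$V_n$ argument, together with the fact that the transfer and the $B\phi$ are motivic operations, should handle this, but the bookkeeping with finite correspondences (especially under composition) is the nontrivial point.  Once that is in place, the splitting of idempotents in $DM(\bC,\bZ_p)$ gives the lemma immediately.
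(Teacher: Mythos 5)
Your proposal is correct and follows essentially the same route as the paper: realize restriction as the graph of the finite covering $(V-S)/Q\to(V-S)/P$ and the transfer as its transpose, so that $A(P,P)\otimes\bZ_p$ acts by finite correspondences on the motive of the approximation, and then split the idempotent $e_S$ in $DM(\bC,\bZ_p)$. The only difference is your detour through Godeaux--Serre smooth projective compactifications, which is not needed for this lemma since $DM(\bC,\bZ_p)$ already contains the motives of the smooth quasi-projective approximations (and their limit); the paper reserves that compactification argument for the subsequent statement about pure Chow motives.
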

\begin{proof} Let $P$ act freely on $V-S$ so that $(V-S)/G$ approximates $BG$.
For a subgroup $i:H\subset P$, the induced map $i^*$ is defined from the projection
\[ pr : (V-S)/H\to (V-S)/G.\]
This corresponds an element in morphism in $Cor(\bC,\bZ_p)$ 
\[pr^*=\{(x,pr(x))|x\in (V-S)/H\}\in Cor((V-S)/H, (V-S)/G).\]
The transfer map  is induced from 
\[ tr=\{(pr(x),x)|x\in (V-S)/H\}\in Cor((V-S)/G,  (V-S)/H)\]
also by the definition of (finite)  correspondences.
Therefore each element in $A(P,P)$ is represented by a morphism of the 
category $DM=DM(\bC,\bZ_p)$. Moreover $DM$ is a triangulated
category and $Im(e_S)$ (i.e. the  cone of $e_S$) is an object of $DM$.
\end{proof} 

{\bf Remark.}  Of course $M_S$ is (in general) not irreducible, while $X_S$ is irreducible.

The category $Chow^{eff}(K,\bZ_p)$ of  (effective) pure Chow motives
is defined  as follows.  An object is a pair $(X,p)$ where $X$ is a projective smooth variety over $K$ and 
$p$ is a projector, i.e. $p\in Mor(X,X)$ with $p^2=p$.
Here a morphism $f\in Mor(X,Y)$ is defined as an element
$ f\in CH^{dim(Y)}(X\times Y)_{\bZ_p}.$
We say that each $M=(X,p)$ is a (pure) motive and define the Chow ring $CH^*(M)=p^*CH^*(X)$,
which is a direct summand of $CH^*(X)$.
We identify that  the motive $M(X)$ of $X$ means $(X,id.)$.
(The category $DM(K,\bZ_p)$ contains the category $Chow^{eff}(K,\bZ_p).)$ 

It is known that  we can approximate
$\bP^{\infty}\times BP$ by  smooth projective varieties  from Godeaux-Serre arguments ([To1]).
Hence we can  get the following lemma  since
\[ CH^*(X\times \bP^{\infty})\cong CH^*(X)[y]\quad |y|=1.\] 
\begin{lemma}  Let $S$ be a simple $A(P,P)$-module.
There are pure motives $M_S(i)\in Chow^{eff}(\bC,\bZ_p)$ such that
\[ lim_{i\to \infty}CH^*(M_S(i))\cong CH^*(X_S)[y],\quad deg(y)=1.\]
\end{lemma}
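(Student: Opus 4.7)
The plan is to combine the correspondence construction from Lemma 10.1 with the Godeaux--Serre approximation of $\bP^\infty\times BP$ by smooth projective varieties, so as to realize the idempotent $e_S$ by an actual projector on a projective variety rather than merely as a cone in the triangulated category $DM$.

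First, invoking Totaro's Godeaux--Serre construction I would fix a directed system of smooth projective $\bC$-varieties $Y_i$ approximating $\bP^\infty\times BP$, chosen so that the pullback $CH^*(Y_i)\to CH^*(\bP^\infty\times BP)\cong CH^*(BP)[y]$ is an isomorphism in codimensions $\le N_i$ with $N_i\to\infty$. For each generator $[Q,\phi]$ of $A(P,P)$, the pair of finite correspondences $pr^*$ and $tr$ built in the proof of Lemma 10.1, composed with the map induced by $\phi$, extends through the Godeaux--Serre compactification to a class $c_{[Q,\phi]}\in CH^{\dim Y_i}(Y_i\times Y_i)_{\bZ_p}$ at each stage of the system, compatibly in $i$.

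Next I would convert the formal idempotent $e_S=\sum_k [Q_k,\phi_k]$ into a genuine projector $p_i$ on $Y_i$. The naive candidate $c_{e_S}=\sum_k c_{[Q_k,\phi_k]}$ satisfies $c_{e_S}^2-c_{e_S}=0$ modulo the kernel of $CH^*(Y_i\times Y_i)\to CH^*(BP\times BP)[y_1,y_2]$ in the stable codimension range. The main obstacle is upgrading this asymptotic identity to an exact one on a single $Y_i$. I expect this to be handled either by taking $i$ large enough that the relevant Chow groups of $Y_i$ have stabilized (so $c_{e_S}^2=c_{e_S}$ literally holds on $Y_i$ in the interesting codimensions), or by invoking a standard idempotent--lifting argument for the $\bZ_p$-module $CH^{\dim Y_i}(Y_i\times Y_i)_{\bZ_p}$ to correct $c_{e_S}$ by classes vanishing on $BP$.

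Finally, setting $M_S(i)=(Y_i,p_i)\in Chow^{eff}(\bC,\bZ_p)$, one has $CH^*(M_S(i))=p_i^*CH^*(Y_i)$, and since the $p_i$ are compatible with the structure maps of the directed system, passing to the limit yields
\[\lim_{i\to\infty}CH^*(M_S(i))=e_S^*CH^*(\bP^\infty\times BP)=e_S^*CH^*(BP)[y]=CH^*(X_S)[y],\]
which is the required identification, with the factor $[y]$ coming precisely from the $\bP^\infty$ factor introduced by the Godeaux--Serre construction.
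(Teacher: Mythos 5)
Your proposal follows essentially the same route as the paper: the paper's entire justification for this lemma is the Godeaux--Serre approximation of $\bP^{\infty}\times BP$ by smooth projective varieties (citing [To1]) together with the identity $CH^*(X\times \bP^{\infty})\cong CH^*(X)[y]$, with the correspondences supplied by the proof of Lemma 10.1. Your elaboration of how to realize $e_S$ as an honest projector on a finite-level $Y_i$ goes beyond what the paper records, and the exactness issue you flag there (that $c_{e_S}^2-c_{e_S}$ may only vanish after restriction to the stable range, not in $CH^{\dim Y_i}(Y_i\times Y_i)$ itself) is a genuine subtlety -- but the paper does not address it either, so your attempt is, if anything, more careful than the source.
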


The following theorem is proved by Totaro, with the assumption $p\ge 5$
but without the assumption of transferred Euler classes (since
it holds when $p\ge 5$). 
\begin{thm}
(Theorem 14.3 in [To2])
Suppose $rank_pP\le 2$ and $P$ has a faithful complex representation of 
the form $W\oplus X$ where $dim(W)\le p$ and $X$ is a sum of $1$-dimensional representations. Moreover $H^{ev}(P)$ is generated by 
transferred Euler classes.
Then we have $CH^*(P)/p\cong H^{ev}(P)$.
\end{thm}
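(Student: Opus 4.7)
The plan is to show the natural cycle map
\[ cl: CH^*(BP)/p \;\longrightarrow\; H^{ev}(P) \]
is an isomorphism by establishing surjectivity and injectivity separately, using the explicit description of $H^{ev}(P)$ assembled in $\S 5$--$\S 9$ together with standard compatibility of transfer and Chern classes with $cl$.

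For surjectivity, I would observe that the Chow ring $CH^*(BP)$ supports Chern classes $c_i^{CH}(\rho) \in CH^*(BP)$ of every complex representation $\rho$ of $P$, and the cycle map sends $c_i^{CH}(\rho)$ to $c_i(\rho) \in H^{2i}(BP;\bZ)$. Moreover, for any subgroup $H \le P$ the Chow transfer $tr_H^P: CH^*(BH) \to CH^*(BP)$ exists (via the finite correspondence recalled in Lemma 10.1) and commutes with $cl$. Hence every transferred Euler class $tr_H^P c_{top}(\rho)$ appearing as a generator of $H^{ev}(P)$ lies in the image of $cl$, and by hypothesis these generate $H^{ev}(P)$ as a ring; so $cl$ is surjective modulo $p$.

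For injectivity, I would use the faithful representation $W\oplus X$ with $\dim W \le p$ to build explicit smooth quasi-projective approximations $(V_n - S_n)/P$ whose Chow rings converge to $CH^*(BP)$, with $\codim_{V_n} S_n$ growing. The bound $\dim W \le p$ ensures that no positive-dimensional summand forces a Chern class of degree $>p$ in the minimal faithful factor, so the sparse stratification implied by $rank_p P \le 2$ can be exploited: one applies a Quillen--Totaro $F$-isomorphism theorem for $CH^*(BP)/p$, since for every abelian $p$-subgroup $A \le P$ the cycle map $CH^*(BA)/p \isoarr H^{ev}(A)$ is already an isomorphism (both are polynomial rings on Chern classes of linear characters). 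Thus the kernel of $cl$ restricts to zero on every elementary abelian subgroup, which matches the kernel of restriction in $H^{ev}(P) \to \lim_V H^{ev}(V)$; combined with surjectivity and a dimension count on the nilpotent ideal $N(P)$ computed in $\S 9$, the map $cl$ must be bijective.

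The main obstacle is controlling the nilpotent part. Modulo nilpotents the Quillen detection argument is standard, but transferring this to the full ring $H^{ev}(P) = H^*(P) \oplus N(P)$ requires that every nilpotent transferred Euler class in $H^{ev}(P)$ has a unique preimage in $CH^*(BP)/p$. This is where the hypothesis that $H^{ev}(P)$ is generated by transferred Euler classes is essential, and where $\dim W \le p$ prevents extra Chow-theoretic relations that would not appear cohomologically. In the $rank_p P = 2$ case, Blackburn's classification reduces this to the metacyclic, $C(r)$, and $G(r',e)$ families, whose $N(P)$ were described explicitly in Theorems 9.3, 9.5 and the subsequent results of $\S 9$; with those descriptions in hand, matching ranks on each side of $cl$ reduces to a finite check on generators.
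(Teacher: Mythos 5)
Your surjectivity argument is exactly the paper's: transferred Euler classes exist in $CH^*(BP)$, the cycle map commutes with transfers and Chern classes, and the hypothesis says these generate $H^{ev}(P)$. That half is fine.

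The injectivity half has a genuine gap. You propose to detect the kernel of $cl$ by restricting to elementary abelian (or abelian) subgroups and invoking a Quillen-type $F$-isomorphism. But restriction to elementary abelian subgroups kills precisely the nilpotent ideal, and $N(P)\ne 0$ for all the groups at hand (Theorems 9.3, 9.5, 9.7); an $F$-isomorphism only controls the kernel up to nilpotents, which is exactly the part you need to control. Your acknowledgement of this and the proposed repair --- ``a dimension count on the nilpotent ideal $N(P)$'' together with surjectivity --- does not close the gap: surjectivity of $cl$ gives a lower bound on the graded dimensions of $CH^*(BP)/p$, whereas injectivity requires an upper bound on the source, and counting dimensions in the target $H^{ev}(P)$ provides no such bound. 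The paper's (i.e.\ Totaro's) proof supplies the missing upper-bound input in two steps that your sketch omits: first, Riemann--Roch without denominators gives $CH^*(BP)/p\cong H^{2*}(P;\bZ)/p$ in degrees $*\le p$; second, the detection target is not $\prod_V CH^*(BV)$ but the strictly finer
\[ \prod_{V} CH^*(BV)\otimes_{\bZ/p} CH^{\le p-1}(BC_P(V)), \]
where the centralizer factors (in degrees $\le p-1$, hence controlled by the first step) do see the nilpotent classes. Injectivity of this map is Totaro's Theorem 12.7, and it is here --- not in controlling Chern classes of the faithful factor --- that the hypotheses $rank_pP\le 2$ and $\dim W\le p$ are actually used. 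Without replacing detection on elementary abelians by detection on the pairs $(V, C_P(V))$ with the low-degree comparison, your argument cannot distinguish the nilpotent classes $c_i$ from zero in the Chow ring.
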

\begin{proof} (See page 179-180 in [To2].)
First note the cycle map is surjective, since $H^{ev}(P)$ is generated 
by transferred
Euler classes.
Using the Riemann-Roch without denominators,
we can show
\[ CH^*(BP)/p\cong H^{2*}(P;\bZ)/p\quad for\ *\le p.\]
By the dimensional conditions of representations $W\oplus X$ and Theorem 
12.7 in [To], we see the following map
\[ CH^*(BP)/p\to \prod_{V} CH^*(BV)\otimes_{\bZ/p} CH^{\le p-1}(BC_P(V))\]
\[ \to \prod_{V}H^*(V;\bZ/p)\otimes_{\bZ/p}H^{\le 2(p-1)}(C_P(V);\bZ/p)\]
is also injective. Here $V$ ranges elementary abelian $p$-subgroups of $P$ and
$C_P(V)$ is the centralizer group of $V$ in $P$.
So we see that the cycle map is also injective.
\end{proof}
Therefore we have
\begin{cor} Let $P$ be $C(r), G(r',e)$ or split metacyclic  groups with
$m-\ell=1$.  Then $CH^*(BP)/p\cong H^{ev}(BG)$.
\end{cor}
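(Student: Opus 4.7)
The plan is to deduce the corollary directly from Theorem 10.3 by verifying its three hypotheses for each of the three families $C(r)$, $G(r',e)$, and split metacyclic $M(\ell,m,n)$ with $m-\ell=1$. The rank condition $\mathrm{rank}_p P \le 2$ is immediate, since all of these groups appear in the Blackburn classification recalled in \S 5--\S 7.

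For the second hypothesis — a faithful complex representation of the form $W \oplus X$ with $\dim W \le p$ and $X$ a sum of $1$-dimensional representations — I would proceed family by family. For $M(\ell,m,n)$ with $m-\ell=1$, the representation $\eta=\mathrm{Ind}_H^P(\xi)$ from \S 5 has dimension $p^{m-\ell}=p$, and adding enough linear characters of $P/[P,P]$ yields a faithful representation of the required shape. For $C(r)$, one takes $W=\mathrm{Ind}_A^{C(r)}(\chi)$ for a suitable faithful character $\chi$ of a maximal abelian subgroup $A$, which is again $p$-dimensional. For $G(r',e)$ one exploits the chain $E\subset C(r'-1)\subset G(r',e)$ from \S 7 to build the $p$-dimensional summand by induction from a rank-two abelian subgroup.

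The third hypothesis, that $H^{ev}(P)$ is generated by transferred Euler classes, is the heart of the matter, and would be verified using the explicit computations of $H^{ev}(P)$ from \S 5, \S 6 and \S 9. The key identity is $c_{\mathrm{top}}(\mathrm{Ind}_H^G V)=\mathrm{Tr}_H^G(c_{\mathrm{top}}(V))$, which at once exhibits the generators $y,y_i=c_1(\rho_i)$ as Euler classes and the top Chern classes $v=c_{p^{m-\ell}}(\eta)$ and $v=c_p(\mathrm{Ind}_A^E(e))$ as transferred Euler classes. The remaining generators, namely $C=c_{p-1}(\mathrm{Ind}_A^E(e))$ in the $C(r)$ and $G(r',e)$ cases, and the nilpotent elements $c_i=\lambda_i c_i(\eta)+\text{lower}$ of Lemma 9.2 in the metacyclic case (and their analogues in \S 9(III)), would be realized by constructing, for each one, an intermediate subgroup $H'\le P$ and a representation $V'$ of $H'$ whose top Chern class transfers to the desired element. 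Quillen's theorem and the very short list of $P$-conjugacy classes of maximal elementary abelian $p$-subgroups (typically one or two) reduce this to a restriction calculation already carried out in \S 5--\S 7.

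The main obstacle is precisely this third hypothesis for the non-top Chern classes $C$ and $c_i$ ($i<p^{m-\ell}$), since a middle Chern class of an induced representation is not tautologically a transferred Euler class. However, by virtue of the completeness of the cohomological data collected in the preceding sections, one can write down explicit transferred Euler classes agreeing with these generators modulo the nilpotent radical and the already-handled generators, which suffices in view of Lemma 9.2 and the analogous arguments in \S 9(III). Once all three hypotheses are in place, Theorem 10.3 applies and yields $CH^*(BP)/p\cong H^{ev}(P)$ for each of the listed families.
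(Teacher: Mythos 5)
Your overall route is the same as the paper's: the corollary is stated there with no argument beyond ``Therefore'' following Theorem 10.3, supplemented by the remark that for $p\ge 5$ Totaro's theorem holds without the transferred--Euler-class hypothesis (which covers $G(r',e)$ entirely, since those groups only exist for $p\ge 5$), and by citations to Totaro's 13.12 for the metacyclic case and to [Ya3] for the extraspecial groups. So checking the three hypotheses of Theorem 10.3 family by family is exactly what is intended, and your treatment of the first two hypotheses is fine (for $G(r',e)$ the $p$-dimensional summand should be induced from the index-$p$ abelian subgroup $\la b,c\ra$, not up the chain $E\subset C(r'-1)\subset G(r',e)$, which would give dimension $p^2$).

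There is, however, one genuinely false step in your verification of the third hypothesis: the ``key identity'' $c_{\mathrm{top}}(\Ind_H^G V)=Tr_H^G(c_{\mathrm{top}}(V))$ does not hold. The Euler class of an induced representation is the multiplicative (Evens) norm of the Euler class, not the additive transfer; the paper's own formula (3) in Section 6 gives $Tr_{A_j}^E(u^i)=0$ for $i<p-1$, so in particular $Tr_{A}^E(c_1(e))=0\neq v$. The conclusion you want survives for a different reason: $v=c_p(\Ind_A^E(e))$ and $v=c_{p^{m-\ell}}(\eta)$ are top Chern classes of representations of $P$ itself, hence are already Euler classes of $P$ (transferred from $H=P$), and need no transfer identity at all; likewise $y$ and the $y_i$ are Euler classes of one-dimensional representations of $P$. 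What genuinely requires transfers are the middle Chern classes, and there your instinct is correct: for example $C$ is recovered from $Tr_{A_j}^E(u^{p-1})=(jy_1+y_2)^{p-1}-C$, which exhibits $C$ as an Euler class of $P$ minus a transferred Euler class, and the nilpotent classes $c_i$ of Section 9 are handled analogously. With the false identity replaced by these two observations, your argument goes through and agrees with the intended proof.
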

Totaro computed $CH^*(BP)/p$ for split metacyclic groups
with $m-\ell=1$ in 13.12 in [To]. 
 When $P$ is the extraspecial $p$-groups of order $p^3$, the above result is
proved in [Ya3].
\begin{thm}
Let $P$ be a split metacyclic $p$-group $M(\ell,m,n)$ with $m-\ell=1$,
$C(r)$ for $p\ge 3$,  or $G(r',e)$ for $p\ge 5$.
Then for each simple
 $A(P,P)$-module
 $S$, there is a motive 
$M_S\in DM(\bC,\bZ_p)$ with
\[ CH^*(M_S)/p\cong H^{ev}(X_S)=H^{even}(X_S;\bZ)/p.\]
\end{thm}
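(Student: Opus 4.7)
The plan is to combine Lemma 10.1, Corollary 10.4, and the $A(P,P)$-equivariance of the cycle map, then pass to the idempotent summand cut out by $e_S$.

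First, I would invoke Lemma 10.1: for each simple $A(P,P)$-module $S$ there exists a motive $M_S \in DM(\bC,\bZ_p)$ with
\[ CH^*(M_S) \cong CH^*(X_S) = e_S CH^*(BP). \]
Next, for the groups $P$ under consideration, Corollary 10.4 provides a ring isomorphism $cl : CH^*(BP)/p \isoarr H^{ev}(P)$, induced by the cycle map. The key remaining point is that this isomorphism is compatible with the $A(P,P)$-action on both sides.

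To verify compatibility, I would argue as follows. The action of a generator $[Q,\phi] \in A(P,P)$ on cohomology is $Tr_Q^P \circ \phi^*$, and the action on $CH^*(BP)$ is defined analogously using the correspondences $pr^*$ and $tr$ exhibited in the proof of Lemma 10.1. The cycle map is a natural transformation that intertwines $\phi^*$ with $\phi^*$ (naturality of $cl$ under pullback) and $Tr_Q^P$ with $Tr_Q^P$ (since the topological transfer is the image under $cl$ of the algebraic finite correspondence $tr$, as the Gysin/transfer constructions agree). Consequently $cl$ is $A(P,P)$-equivariant, hence commutes with the idempotent $e_S \in A(P,P) \otimes \bZ_p$.

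Applying $e_S$ to both sides then yields
\[ CH^*(M_S)/p \cong e_S\bigl(CH^*(BP)/p\bigr) \cong e_S H^{ev}(P) = H^{ev}(X_S), \]
where the last equality is the definition of $H^{ev}(X_S)$ as the $e_S$-summand of $H^{ev}(P)$ used throughout the paper. The hypotheses on $P$ enter only through Corollary 10.4, so the theorem covers precisely those $P$ for which that corollary applies.

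The main obstacle is the $A(P,P)$-equivariance of $cl$, in particular that the cycle map intertwines the algebraic transfer (a finite correspondence) with the topological transfer. This is essentially built into Totaro's definition of $CH^*(BP)$ via the finite-dimensional approximations $(V-S)/G$, together with the fact that for the finite covering $(V-S)/Q \to (V-S)/P$ the algebraic and topological transfers are represented by the same geometric cycle; once this naturality is in place, the rest of the argument is formal and the proof is complete.
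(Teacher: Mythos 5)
Your proposal is correct and is essentially the argument the paper intends: the theorem is stated as a direct combination of Lemma 10.1 (existence of the motive $M_S$ with $CH^*(M_S)\cong e_S CH^*(BP)$) and Corollary 10.4 ($CH^*(BP)/p\cong H^{ev}(P)$ for these groups), with the $A(P,P)$-equivariance of the cycle map --- which the paper asserts via the commutative diagram following the definition $CH^*(X_S)=e_SCH^*(BP)$ and the remark that $CH^*(BP)$ carries transfers --- allowing one to pass to the $e_S$-summand on both sides. Your explicit justification that the algebraic finite correspondence $tr$ and the topological transfer are matched under $cl$ is the one point the paper leaves implicit, and you have filled it in correctly.
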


For a cohomology theory $h^*(-)$, define the $h^*(-)$-theory
topological nilpotence degree $d_0(h^*(BG))$ to be the least nonnegative integer $d$ such that
the map
\[ h^*(BG)/p \to \prod_{
V: el.ab.}h^*(BV)\otimes h^{\le d}(BC_G(V))/p\]      
(where $V$ ranges elementary abelian $p$-subgroups of $G$) is injective.  Note that $d_0(H^*(BG;\bZ))\le d_0(H^*(BG;\bZ/p))$.

Totarto computed it  in the many  cases of groups $P$ with $rank_pP=2$.
In particular, if $P$ is a split metacyclic $p$-group for $p\ge 3$,  then $d_0(H^*(P;\bZ/p))=2$ and
$d_0(CH^*(BP))=1$ when $m-\ell=1$.   Hence $d_0(H^*(P;\bZ))=2$ for these
 split metacyclic groups $P$ (for $p\ge 3$).

This fact also is shown directly from Theorem 9.3 and 9.5. 
Let $P=M(\ell,m,n)$ with $m-\ell=1$.
 Consider the restriction map
\[ H^{ev}(P)\to H^{ev}(V)\otimes
 H^2(P;\bZ)/p\quad ( where\ V=\la a^{p^{m-1}}\ra\subset  Z(P): center)\]
induced the product map $V\times P \to P$.  
Let $\ell=m-1>m-n$.
Then the element is defined in  Lemma 9.1, 9.2
   \[  c_j=xzu^{j-1}\mapsto \sum_{i} u^{j-i-1}\otimes xzu^i
 \equiv u^{j-1}\otimes c_1\not =0 \in  H^{ev}(V)\otimes H^2(P;\bZ)/p.\]
 For $\ell=m-n$, we can see
$d_0(H^*(P;\bZ))=2$ similarly.

\end{document}